\newtheorem{theorem}[equation]{Theorem}
\newtheorem{lemma}[equation]{Lemma}
\newtheorem{proposition}[equation]{Proposition}
\newtheorem{corollary}[equation]{Corollary}
\newtheorem{definition-lemma}[equation]{Definition-Lemma}
\theoremstyle{definition}
\newtheorem{definition}[equation]{Definition}
\theoremstyle{remark}
\newtheorem{remark}[equation]{Remark}
\numberwithin{equation}{section}
\numberwithin{figure}{section}
\newcommand {\Adm} {\operatorname{\text{Adm}}}
\newcommand {\Aut} {\operatorname{\text{Aut}}}
\newcommand {\Cl} {\operatorname{\text{Cl}}}
\newcommand {\Conv} {\operatorname{\text{Conv}}}
\newcommand {\Curv} {\operatorname{\text{\ovA{Curv}}}}
\newcommand {\oCurv} {\operatorname{\text{Curv}}}
\newcommand {\Eff}  {\operatorname{Eff}}
\newcommand {\Fix}  {\operatorname{Fix}}
\newcommand {\Int}  {\operatorname{Int}}
\newcommand {\Nef} {\operatorname{\text{Nef} \;}}
\newcommand {\Nefe} {\operatorname{\Nef^{e}}}
\newcommand {\Pic}  {\operatorname{Pic}}
\newcommand {\relInt}  {\operatorname{relInt}}
\newcommand {\Supp} {\operatorname{\text{Supp}}}
\newcommand{\ba}{\backslash}
\newcommand{\vsimeq}{\mathrel{\rotatebox{90}{$\simeq$}}}
\newcommand*{\ov}[1]{%
  $\m@th\overline{\mbox{#1}}$%
}
\newcommand*{\ovA}[1]{%
  $\m@th\overline{\mbox{#1}\raisebox{3mm}{}}$%
}
\newcommand*{\ovB}[1]{%
  $\m@th\overline{\mbox{#1\rule{0pt}{3mm}}}$%
}
\newcommand*{\ovC}[1]{%
  $\m@th\overline{\mbox{#1\strut}}$%
}
\newcommand*{\ovD}[1]{%
  $\m@th\overline{\mbox{#1\vphantom{\"A}}}$%
}
\newcommand*{\ovE}[1]{%
  $\m@th\overline{\raisebox{0pt}[1.2\height]{#1}}$%
}
\newcommand*{\ovF}[1]{%
  $\m@th\overline{\raisebox{0pt}[\dimexpr\height+1mm\relax]{#1}}$%
}
\newcommand*{\ovG}[1]{%
  $\m@th\overline{\raisebox{0pt}[\dimexpr\height+1mm\relax]{#1\vphantom{A}}}$%
}
\begin{document}

\title{A cone conjecture for log Calabi-Yau surfaces}
\author{Jennifer Li}

\maketitle

%
%

\begin{abstract}
We consider log Calabi-Yau surfaces $(Y, D)$ with singular boundary. In each deformation type, there is a distinguished surface $(Y_e,D_e)$ such that the mixed Hodge structure on $H_2(Y \setminus D)$ is split. We prove that (1) the action of the automorphism group of $(Y_e,D_e)$ on its nef effective cone admits a rational polyhedral fundamental domain; and (2) the action of the monodromy group on the nef effective cone of a very general surface in the deformation type admits a rational polyhedral fundamental domain. These statements can be viewed as versions of the Morrison cone conjecture for log Calabi--Yau surfaces. In addition, if the number of components of $D$ is $\le 6$, we show that the nef cone of $Y_e$ is rational polyhedral and describe it explicitly. This provides infinite series of new examples of Mori Dream Spaces.
\end{abstract}

%
%

\section{Introduction} Given a smooth projective variety $Y$ over $\mathbb{C}$, the closed cone of curves of $Y$ is the closure of the set of all nonnegative linear combinations of classes of irreducible curves in $H_{2}(Y, \mathbb{R})$. The cone of curves of any Fano variety is rational polyhedral, meaning it has finitely many rational generators (see Theorem 1.24 on p.22 of \cite{KM98}). But this is not true in general for Calabi-Yau varieties - if $Y$ is Calabi-Yau, the cone of curves of $Y$ could be round, for example. The nef cone is the dual of the cone of curves.

The Morrison cone conjecture states that if $Y$ is a Calabi-Yau variety, then there exists a rational polyhedral cone which is a fundamental domain for the action of the automorphism group of $Y$ on the nef cone. This can be pictured in dimension two using hyperbolic geometry (see e.g., \cite{T11}).

The conjecture is known to be true in dimension two, but for higher dimensions, it is an open question. In \cite{T10}, Totaro has shown that a generalization of this conjecture is true in dimension two: if $(Y, \Delta)$ is a klt Calabi-Yau pair, then the automorphism group of $Y$ acts on the nef cone with a rational polyhedral fundamental domain.

We study a cone conjecture for log Calabi-Yau surfaces that is similar to, but different from, the conjecture proved by Totaro. Let $Y$ be a smooth projective surface and $D$ a reduced normal crossing divisor on $Y$ such that $K_{Y} + D = 0$. We call $(Y, D)$ a \emph{log Calabi-Yau surface}. Additionally, we require $D$ to be singular, and write $D = D_{1} + \dots + D_{n}$ for the irreducible components of $D$. By the Gross-Hacking-Keel Torelli theorem for log Calabi-Yau surfaces (\cite{GHK15b}, Theorem 1.8), in each deformation type of log Calabi-Yau surfaces there exists a unique pair $(Y, D) = (Y_{e}, D_{e})$ such that the mixed Hodge structure on $Y \setminus D$ is split. The main result of this paper is the proof of the following statement (see Theorem \ref{coneConjectureYgen} and Theorem \ref{coneConjectureYprimegen}):

\begin{theorem}
\label{thm:coneConjecture}
Consider a deformation type of log Calabi-Yau surfaces $(Y, D)$ with singular boundary.
\begin{enumerate}
\item Let $(Y_{e}, D_{e})$ be the unique surface in this deformation type with split mixed Hodge structure. Let $K$ be the kernel of the action of the automorphism group of the pair on $H^{2}(Y, \mathbb{Z})$. Then $\Aut(Y_{e}, D_{e})/K$ acts on the nef effective cone $\Nefe(Y_{e})$ with a rational polyhedral fundamental domain.
\item Let $(Y_{gen}, D_{gen})$ be a very general surface in this deformation type. Then the monodromy group $\Adm$ acts on the nef effective cone $\Nefe(Y_{gen})$ with a rational polyhedral fundamental domain.
\end{enumerate}
\end{theorem}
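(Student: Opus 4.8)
The plan is to reduce both statements to Looijenga's theorem on discrete groups acting on convex cones generated by a lattice of signature $(1,n)$ --- the same mechanism behind the two-dimensional cases of the Morrison and Totaro conjectures (cf. \cite{T10}, \cite{T11}). Since a log Calabi--Yau surface $(Y,D)$ with singular boundary is rational, $L := H^{2}(Y_{e},\mathbb{Z})$ (for part (1)), respectively the Picard lattice of $Y_{gen}$ (for part (2)), carries an intersection form of signature $(1,n)$. Let $\mathcal{C}$ be the component of $\{x : x^{2}>0\}$ containing an ample class and $\mathcal{C}^{+}$ the convex hull of $\overline{\mathcal{C}}\cap L_{\mathbb{Q}}$; in the hyperboloid model $\mathcal{C}$ is hyperbolic $n$-space and $\mathcal{C}^{+}$ its rational closure. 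The first task is to recognize $\Nefe(Y)$ as a \emph{Looijenga cone}: it is $\mathcal{C}^{+}$ cut down by the half-spaces $\{x : x\cdot[C]\ge 0\}$ as $C$ ranges over the negative curves of $Y$, it contains the open positive cone away from those walls, and its ``round'' part lies on $\partial\mathcal{C}$. It is here that one needs the nef \emph{effective} cone rather than the full nef cone: only $\Nefe$ meets $\mathcal{C}^{+}$ in a set of finite covolume modulo the group, which is exactly the hypothesis Looijenga's theorem requires.

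For part (1) the crux is to show that $\Gamma := \Aut(Y_{e},D_{e})/K$, viewed in $O(L)$, is \emph{arithmetic}, i.e. commensurable with $O^{+}$ of the relevant signature-$(1,k)$ lattice (for instance the orthogonal complement of the span of the boundary classes $[D_{i}]$ when $K_{Y}^{2}\le 0$), so that it acts with finite covolume on $\mathcal{C}/\mathbb{R}_{>0}$. The essential input is the Gross--Hacking--Keel Torelli theorem (\cite{GHK15b}, Theorem~1.8): it identifies the automorphisms of $(Y_{e},D_{e})$ with the isometries of $L$ fixing each $[D_{i}]$ (up to the finite cyclic symmetry of the boundary) and preserving the nef, equivalently effective, cone, \emph{subject to a period-point constraint that is vacuous precisely because the mixed Hodge structure of $(Y_{e},D_{e})$ is split}. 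Thus $(Y_{e},D_{e})$ carries the largest automorphism group in its deformation type, and a lattice computation identifies $\Gamma$ --- up to finite index and finite kernel --- with the full isometry group preserving the chamber decomposition, in particular with an arithmetic group. Feeding this, the description of $\Nefe(Y_{e})$ above, and the finiteness of negative curves modulo $\Gamma$ into Looijenga's theorem yields a rational polyhedral fundamental domain for $\Gamma$ on $\Nefe(Y_{e})$; since $K$ acts trivially, the same cone works for $\Aut(Y_{e},D_{e})/K$.

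Part (2) is parallel, with $\Aut(Y_{e},D_{e})/K$ replaced by the admissible monodromy group $\Adm$. By construction $\Adm$ contains the reflections in the nodal $(-2)$-classes together with the lattice image of the connected monodromy of the family, and for a very general member $(Y_{gen},D_{gen})$ the Picard lattice is the minimal one occurring in the deformation type, so $\Adm$ is again of finite index in the corresponding $O^{+}$ and hence arithmetic. As $\Nefe(Y_{gen})$ is carved out of $\mathcal{C}^{+}$ by the same nodal walls and is $\Adm$-invariant, Looijenga's theorem applies once more. (Alternatively one could degenerate $Y_{gen}$ to $Y_{e}$ to transport the chamber picture, but the direct argument avoids having to compare the two groups.)

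I expect the main obstacle to be the arithmeticity claim and its extraction from the Torelli theorem: one must pin down the sublattice of $H^{2}(Y_{e},\mathbb{Z})$ on which the group acts faithfully, verify that splitness of the mixed Hodge structure really does kill every obstruction --- so that the automorphism group is maximal rather than a thin subgroup --- and confirm finiteness of covolume. Tied to this is the remaining hypothesis of Looijenga's theorem, namely that there are only finitely many $\Gamma$-orbits (respectively $\Adm$-orbits) of negative curves on $Y$, equivalently finitely many walls of $\Nefe$ modulo the group; this ``Noetherianity'' is what upgrades the fundamental domain from locally rational polyhedral to genuinely rational polyhedral, and in the log Calabi--Yau setting it is the subtlest geometric point, presumably relying once more on the special geometry of $(Y_{e},D_{e})$ and the classification of internal $(-1)$- and $(-2)$-curves.
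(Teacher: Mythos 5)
There is a genuine gap, and it sits exactly where you flag ``the main obstacle'': your argument runs through arithmeticity (finite covolume of $\Gamma$ and $\Adm$ in the orthogonal group of the relevant lattice), which is neither the hypothesis of the Looijenga theorem actually needed here nor a claim you establish. The criterion used in the paper (\cite{L14}, Proposition--Definition 4.1) is: if $\Gamma$ preserves the lattice and the cone, and there exists a \emph{polyhedral} cone $\Pi \subset C_{+}$ with $\Gamma \cdot \Pi = C_{+}$, then a rational polyhedral fundamental domain exists. The entire content of the proof is producing such a $\Pi$, and the indispensable geometric input --- missing from your proposal --- is the Engel--Friedman decomposition (\cite{EF16}, Proposition 1.5, extended to real coefficients): for $n \ge 3$ every effective divisor on $Y_{gen}$ is $\mathbb{R}$-linearly equivalent to $\sum a_i D_i + \sum b_j E_j$ with the $E_j$ disjoint interior $(-1)$-curves. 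This shows that the rational polyhedral cones $\langle D_1,\dots,D_n,E_1,\dots,E_k\rangle_{\mathbb{R}_{\ge 0}} \cap \Nefe(Y_{gen})$ cover $\Nefe(Y_{gen})$, and Friedman's finiteness of $\Adm$-orbits on collections of disjoint interior $(-1)$-curves (\cite{F15}, Corollary 9.10) then gives $\Pi$ as the convex hull of finitely many orbit representatives. Your ``finitely many orbits of walls'' is not a substitute: without the covering statement it does not show that translates of one polyhedral cone exhaust $\Nefe$, unless one already has finite covolume --- which is precisely the unproved arithmeticity claim. That claim is doubtful: $\Adm$ is by definition the stabilizer of $\Nef(Y_{gen})$ inside the stabilizer of the $[D_i]$, and nothing identifies it with a finite-index subgroup of $O^{+}$ of $\langle D_1,\dots,D_n\rangle^{\perp}$; for $\Aut(Y_e,D_e)/K \cong \Adm/W$ it is even less plausible, since this quotient can be very small while the lattice has large rank. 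You also do not address $n \le 2$, where Engel--Friedman does not apply and the paper argues separately that $\Nef(Y_e)$ is rational polyhedral and $\Adm = W$.

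For part (1) the paper does not apply Looijenga directly to $\Aut(Y_e,D_e)/K$; it follows Sterk: take the explicit fundamental domain $\sigma(y) = \{x \in \Nefe(Y_{gen}) : \gamma x \cdot y \ge x \cdot y \ \text{for all}\ \gamma \in \Adm\}$ for $\Adm$, verify via the reflection formula $s_{\alpha}(x) = x + (x\cdot\alpha)\alpha$ that $\sigma(y)$ lies in the Weyl chamber $\Nefe(Y_e)$, and then use the decomposition $\Adm = W \rtimes \Aut(Y_e,D_e)/K$ together with the fact that $\Nefe(Y_e)$ is a fundamental domain for $W$ on $\Nefe(Y_{gen})$ to conclude that the same $\sigma(y)$ works for $\Aut(Y_e,D_e)/K$ on $\Nefe(Y_e)$. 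Your reading of the Torelli theorem --- splitness of the mixed Hodge structure makes the period constraint vacuous, so $\Aut(Y_e,D_e)/K$ is the full stabilizer of the chamber $\Nefe(Y_e)$ in $\Adm$ --- is correct and is one ingredient of that argument, but by itself it neither makes the group arithmetic nor produces the fundamental domain.
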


The Morrison cone conjecture, stated in 1993, is originally inspired by mirror symmetry. The log Calabi-Yau surface version of this conjecture is also related to mirror symmetry through the deformation theory of cusp singularities of surfaces. 

Given a log Calabi-Yau surface $(Y, D)$ such that the intersection matrix $(D_{i} \cdot D_{j})$ is negative definite, we may contract the boundary $D$ to obtain a normal surface $Y$ with a cusp singularity $p \in Y^{\prime}$ (see Grauert \cite{G62} and Definition \ref{def:cuspSingularity}). Cusp singularities come in dual pairs such that the links are diffeomorphic but have opposite orientations. If $(Y^{\prime}, p)$ is obtained by contracting the boundary of a log Calabi-Yau surface $(Y, D)$ to a cusp singularity $p \in Y^{\prime}$, then, conjecturally, $(Y, D)$ corresponds to an irreducible component of the deformation space of the dual cusp (\cite{GHK15a}, \cite{E15}, \cite{EF16}). This is expected as a consequence of mirror symmetry: $Y \ba D$ is mirror to the Milnor fiber of the corresponding smoothing of the dual cusp (\cite{Ke15}, \cite{HKe21}). Again conjecturally, the component of the deformation space of the dual cusp can be described in terms of the action of the monodromy group $\Adm$ on $\Nef(Y^{\prime})$, by a construction of Looijenga (\cite{L03}, $\S 4$). However, to use this construction, the group $\Adm$ must act with a rational polyhedral fundamental domain on the effective nef cone of $Y^{\prime}$, and this is the original motivation for our conjecture, cf. \cite{M93}.

This paper is organized as follows. Sections \ref{Background}-\ref{ProofOfTheConjecture} contain the proof of the main theorem. In Section \ref{NewExamplesMDS}, we give explicit descriptions of certain cones of curves, which provide infinite series of new examples of Mori Dream Spaces. We end the paper with Section \ref{Motivations}, where we explain in some more detail the motivations of our project.

\noindent {\bf Acknowledgements.} This paper is based on results from my PhD thesis. I am grateful to my advisor, Paul Hacking, for his guidance on this project. I thank Ben Heidenreich, Eyal Markman, Jenia Tevelev, and Burt Totaro for their insightful comments and suggestions. I also thank Alejandro Morales, John Christian Ottem, Luca Schaffler, Angelica Simonetti, and Sebasti\'{a}n Torres for helpful discussions. My research was partially supported by NSF grant DMS-1901970.

%
%

\section{BACKGROUND}
\label{Background}

Let $Y$ be a smooth projective variety. We define $N^{1}(Y)$ to be the space of divisors with real coefficients modulo numerical equivalence, and the space $N_{1}(Y)$ to be the space of 1-cycles with real coefficients modulo numerical equivalence. Because $Y$ is a rational surface in our setting,
\begin{align}
\label{eqn:PicYClY}
N^{1}(Y) &= H^{2}(Y, \mathbb{R}) = \Pic(Y) \otimes \mathbb{R}, \\
N_{1}(Y) &= H_{2}(Y, \mathbb{R}) = \Cl(Y) \otimes \mathbb{R},
\end{align}
and $N^{1}(Y) = N_{1}(Y)$. We define the {\it nef cone of $Y$} to be
\begin{center}
$\Nef(Y) = \{ L \in N^{1}(Y) \; \vert \; L \cdot C \geq 0 \text{ for all irreducible curves } C \subset Y \}$.
\end{center}
The {\it effective cone of $Y$} is
\begin{center}
$\Eff(Y) = \bigl\{ \displaystyle{\sum} a_{i} [D_{i}] \in N^{1}(Y) \; \vert \; a_{i} \in \mathbb{R}_{\geq 0} \text{ and } D_{i} \subset Y \text{ are codimension one subvarieties} \bigr\}$.
\end{center}
Following Kawamata in \cite{K97}, we define the {\it nef effective cone of $Y$} to be
\begin{center}
$\Nefe(Y) = \Nef(Y) \; \bigcap \; \Eff(Y)$.
\end{center}
\noindent We denote the {\it convex hull} of the set $S$ by $\Conv(S)$, where $S$ is subset of a real vector space. If $Y$ is a surface, then 
\begin{equation}
\label{nefeConvexHullNefYPicY}
\Nefe(Y) = \Conv \, \bigl( \{[L] \in N^{1}(Y) \; \vert \; L \in \Pic(Y) \text{ is nef and } h^{0}(L) \neq 0\} \bigr).
\end{equation}

\begin{definition}
\label{def:OpenConeOfCurves}
The {\it cone of curves of $Y$} is defined as follows:
\begin{center}
$\oCurv(Y) = \bigl\{ \displaystyle{\sum} a_{i} [C_{i}] \in N_{1}(Y) \; \vert \; a_{i} \in \mathbb{R}_{\geq 0} \text{ and each } C_{i} \subset Y \text{ an irreducible curve} \bigr\}.$
\end{center}
We write $\Curv(Y)$ to mean the closure of the cone of curves. 
\end{definition}

\begin{definition}
\label{def:RPcone}
Let $L$ be a finitely generated free Abelian group, ie., $L \simeq \mathbb{Z}^{\rho}$ for some $\rho \geq 0$. A cone $C \subset L \otimes_{\mathbb{Z}} \mathbb{R} \simeq \mathbb{R}^{\rho}$ is said to be {\it rational polyhedral} if
\begin{center}
$C = \langle v_{1}, \dots, v_{r} \rangle_{\mathbb{R} \geq 0} = \{a_{1}v_{1} + \dots + a_{r}v_{r} \; \vert \; a_{i} \in \mathbb{R}_{\geq 0} \}$,
\end{center}
for some $v_{1}, \dots, v_{r} \in L$. That is, the cone $C$ is generated by finitely many integral vectors $v_{1}, \dots, v_{r} \in L$.
\end{definition}

\begin{definition}
\label{def:logCYpair}
A {\it log Calabi-Yau surface} is a pair $(Y, D)$ where $Y$ is a smooth complex projective surface and $D \subset Y$ is a reduced normal crossing divisor such that $K_{Y} + D = 0$. We say that $(Y, D)$ has \emph{maximal boundary} if $D$ is singular. We write $D = D_{1} + \dots + D_{n}$, where $n$ is the number of irreducible components or the {\it length} of $D$.
\end{definition}

\noindent In this paper, we always assume that $(Y, D)$ has maximal boundary. If $(Y, D)$ is a log Calabi-Yau surface with maximal boundary, then $Y$ is a rational surface (\cite{GHK15b}, p.2).

\begin{remark}
\label{rem:boundary}
The boundary $D$ is either a rational curve of arithmetic genus one with a single node (i.e., a copy of $\mathbb{P}^{1}$ with two points identified to form a node), or it is a cycle of smooth rational curves (i.e., a cycle of $n$ copies of $\mathbb{P}^{1}$). This follows from the adjunction formula. We fix a cyclic ordering $D = D_{1} + \dots + D_{n}$ of the components of $D$ and a compatible orientation (an isomorphism $H_{1}(D, \mathbb{Z}) \simeq \mathbb{Z})$. This orientation is uniquely determined by the cyclic ordering for $n > 2$.
\end{remark}

\begin{definition} 
\label{def:genericPair}
We say that a log Calabi-Yau surface $(Y, D)$ is {\it generic} if there are no $(-2)$-curves $C$ contained in $Y \setminus D$. We sometimes write $(Y_{gen}, D_{gen})$ to denote one such log Calabi-Yau surface in a given deformation type.
\end{definition}

\begin{definition}
\label{def:deformationEquivalent}
Two log Calabi-Yau surfaces $(Y^{1}, D^{1})$ and $(Y^{2}, D^{2})$ are said to be {\it deformation equivalent} if there exists a flat family $(\mathcal{Y}, \mathcal{D}) = (\mathcal{Y}, \mathcal{D}_{1} + \cdots + \mathcal{D}_{n})$ of log Calabi-Yau surfaces over a connected base $S$ such that there are points $p, q \in S$ with fibers $f^{-1}(p) = (Y^{1}, D^{1})$ and $f^{-1}(q) = (Y^{2}, D^{2})$. In this case, we say that $(Y^{1}, D^{1})$ and $(Y^{2}, D^{2})$ are of the same {\it deformation type}.
\end{definition}

By the Torelli Theorem in \cite{GHK15b}, given a log Calabi-Yau surface $(Y, D)$, the moduli space $\mathcal{M}$ of log Calabi-Yau surfaces that are deformation equivalent to $(Y, D)$ can be described explicitly and the locus of generic surfaces is the complement of a countable union of divisors in $\mathcal{M}$ (see \cite{GHK15b}, Section 6). For any two generic surfaces of the same deformation type, the nef cones of the two surfaces are the same. This cone for $Y_{gen}$ is described after the following definition:

\begin{definition}
\label{def:interior-1curve}
For a log Calabi-Yau surface $(Y, D)$, an {\it interior $(-1)$-curve} is a smooth rational curve of self-intersection $-1$ that is not contained in the boundary $D$. By the adjunction formula, such a curve must intersect the boundary transversely at a single point.
\end{definition}

\begin{proposition} ( \cite{GHK15b}, Lemma 2.15)
\begin{align*}
\Nef(Y_{gen}) = \{ L \in \Pic(Y) \otimes_{\mathbb{Z}} \mathbb{R} \; \vert \; &L^{2} \geq 0 \text{ and } L \cdot D_{i} \geq 0 \; \text{ for all } \; i \text{ and } \\
&L \cdot C \geq 0 \; \text{ for any interior (-1)-curve } C\}.
\end{align*}
\end{proposition}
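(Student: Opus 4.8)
The plan is to realize $\Nef(Y_{gen})$ as the dual of the (closed) cone of curves and to isolate exactly which irreducible curves impose a nontrivial inequality. One inclusion is immediate: if $L$ is nef then $L\cdot C\ge 0$ for every irreducible curve $C$ (in particular for the $D_{i}$ and for every interior $(-1)$-curve), and a nef class on a smooth projective surface automatically satisfies $L^{2}\ge 0$, so $\Nef(Y_{gen})$ is contained in the right-hand side. For the reverse inclusion, the key observation is that only irreducible curves of negative self-intersection can cut the cone down. Let $\mathcal{C}^{+}$ denote the closure of the connected component of $\{x\in N^{1}(Y):x^{2}>0\}$ containing the ample cone. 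Since the intersection form on $N^{1}(Y)$ has signature $(1,\rho-1)$ by the Hodge index theorem, $\mathcal{C}^{+}$ is self-dual and any two of its elements pair nonnegatively (the light-cone lemma). If $C$ is an irreducible curve with $C^{2}\ge 0$, then $[C]\in\mathcal{C}^{+}$, since $[C]$ meets an ample class positively and has nonnegative square, hence lies in the forward component; and any $L$ in the right-hand side also lies in $\mathcal{C}^{+}$; therefore $L\cdot C\ge 0$ automatically. So the whole statement reduces to identifying the irreducible curves of negative self-intersection.

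The claim is that on a generic log Calabi--Yau surface $(Y,D)$ with maximal boundary, every irreducible curve $C$ with $C^{2}<0$ is either one of the boundary components $D_{i}$ or an interior $(-1)$-curve. I would prove this by adjunction. Suppose $C$ is irreducible, $C^{2}<0$, and not a component of $D$; then $C\cdot D\ge 0$ since $D$ is effective, and $K_{Y}=-D$ gives $C^{2}-C\cdot D=C^{2}+C\cdot K_{Y}=2p_{a}(C)-2$. The left-hand side is negative, so $p_{a}(C)=0$, hence $C\cong\mathbb{P}^{1}$ and $C^{2}=C\cdot D-2$; since $C^{2}<0$ this forces $C\cdot D\in\{0,1\}$. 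If $C\cdot D=1$ then $C^{2}=-1$ and $C$ is an interior $(-1)$-curve; if $C\cdot D=0$ then $C^{2}=-2$ and $C$ is an interior $(-2)$-curve, contradicting genericity (Definition \ref{def:genericPair}). Granting the claim, the proposition follows: for $L$ in the right-hand side and $C$ irreducible, either $C^{2}<0$, in which case $C$ is a $D_{i}$ or an interior $(-1)$-curve and $L\cdot C\ge 0$ by hypothesis, or $C^{2}\ge 0$ and $L\cdot C\ge 0$ by the light-cone lemma; hence $L\in\Nef(Y_{gen})$.

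The step I expect to carry the real content is the claim, and in particular the essential use of the genericity hypothesis: it is precisely the possible presence of interior $(-2)$-curves that would force extra inequalities and shrink the nef cone, so ``generic'' cannot be dropped. The remaining inputs are the Hodge index theorem and the standard fact (via Riemann--Roch) that a divisor of positive self-intersection lying over an ample class is effective, which together yield the self-duality of $\mathcal{C}^{+}$ and the reduction to negative curves. The one bookkeeping point I would treat carefully is the forward/backward ambiguity in ``$L^{2}\ge 0$'': the right-hand side is to be read as lying in $\mathcal{C}^{+}$ rather than in $-\mathcal{C}^{+}$, and this is forced by the conditions $L\cdot D_{i}\ge 0$ for all $i$ together with effectivity of $-K_{Y}=D$ and, where needed, the conditions on interior $(-1)$-curves --- for instance when $D^{2}>0$ one applies the light-cone lemma to $-L$ and $D$ to rule out $-L\in\mathcal{C}^{+}$ unless $L=0$.
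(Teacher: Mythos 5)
The paper does not prove this proposition: it is quoted directly from \cite{GHK15b}, Lemma 2.15, with no argument supplied, so there is no in-paper proof to compare against. Your reconstruction is the standard one and its core is correct: the forward inclusion is formal, and the reverse inclusion rests on (a) the light-cone lemma for a lattice of signature $(1,\rho-1)$, which disposes of every irreducible curve with $C^{2}\geq 0$, and (b) the adjunction computation $C^{2}=C\cdot D-2$ for an irreducible rational curve $C\not\subset D$, which shows that the only interior curves of negative self-intersection are $(-1)$-curves meeting $D$ once and $(-2)$-curves disjoint from $D$, the latter excluded precisely by genericity. This correctly isolates where the hypothesis ``generic'' enters.

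The one step you assert rather than prove is the normalization issue you flag at the end: excluding classes $L$ with $-L\in\overline{\mathcal{C}^{+}}\setminus\{0\}$. Your argument via the light-cone lemma applied to $D$ works only when $D^{2}>0$, whereas the cases of real interest in this paper (contractible boundary) have $(D_{i}\cdot D_{j})$ negative definite or semidefinite, so $D^{2}\leq 0$ and the phrase ``where needed, the conditions on interior $(-1)$-curves'' is not yet an argument. The issue is not vacuous: for a cycle of $(-2)$-curves one has $D^{2}=0$ and $L=-D$ satisfies $L^{2}\geq 0$ and $L\cdot D_{i}\geq 0$ for all $i$; it is excluded only because $L\cdot E=-D\cdot E=-1$ for any interior $(-1)$-curve $E$. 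A clean way to close this for $n\geq 3$ is via Proposition \ref{prop:EF-realCoefficients}: an ample class $H$ is effective by Riemann--Roch, hence $H\equiv\sum a_{i}D_{i}+\sum b_{j}E_{j}$ with $a_{i},b_{j}\geq 0$ and $E_{j}$ interior $(-1)$-curves, so any $L$ in the right-hand side satisfies $L\cdot H\geq 0$, which together with $L^{2}\geq 0$ and the Hodge index theorem forces $L\in\overline{\mathcal{C}^{+}}$ unless $L=0$. (Alternatively, one reads the condition $L^{2}\geq 0$ as membership in the forward cone $\overline{\mathcal{C}^{+}}$ containing the ample classes, which is how the statement is actually used in the rest of the paper.) With that step supplied, your proof is complete.
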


\begin{lemma}
\label{YgenLnefImpliesEffective}
Let $(Y, D)$ be a log Calabi-Yau surface. If $L \in \Pic(Y)$ is nef, then $L$ is effective.
\end{lemma}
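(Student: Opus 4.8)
The plan is to use the Riemann--Roch theorem on the rational surface $Y$ together with the fact that $K_Y = -D$ is anti-effective. Since $Y$ is a smooth rational projective surface, for any $L \in \Pic(Y)$ we have
\[
\chi(L) = \chi(\mathcal{O}_Y) + \tfrac{1}{2}\bigl(L^2 - L \cdot K_Y\bigr) = 1 + \tfrac{1}{2}\bigl(L^2 + L \cdot D\bigr),
\]
using $\chi(\mathcal{O}_Y) = 1$ and $K_Y = -D$. If $L$ is nef, then $L^2 \geq 0$ and $L \cdot D = \sum_i L \cdot D_i \geq 0$ because each $D_i$ is an effective curve; hence $\chi(L) \geq 1 > 0$.

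Next I would control $h^2(L)$ by Serre duality: $h^2(L) = h^0(K_Y - L) = h^0(-D - L)$. I claim this vanishes. Indeed, if $-D - L$ were effective, then since $D$ is effective we would get that $-L = (-D-L) + D$ is effective; but $L$ is nef, so $L \cdot H \geq 0$ for an ample $H$, forcing $(-L)\cdot H \leq 0$, and an effective nonzero class has strictly positive intersection with $H$. The only escape is $-L = 0$, i.e. $L = 0$, which is trivially effective. So for $L \neq 0$ nef we have $h^2(L) = 0$, and then $h^0(L) \geq \chi(L) \geq 1$, so $L$ is effective; the case $L = 0$ is clear.

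I expect no serious obstacle here: the argument is the standard Riemann--Roch-plus-Serre-duality trick, and the only mild subtlety is the vanishing of $h^2$, which is where the hypothesis $K_Y + D = 0$ (equivalently, $Y$ being "log Calabi--Yau", so that $-K_Y$ is effective) enters decisively — on a general rational surface a nef divisor need not be effective, so the anti-effectivity of $K_Y$ is doing real work. One should also note that the statement as phrased does not require $(Y,D)$ to have maximal boundary, and the proof above indeed does not use it; only $K_Y = -D$ with $D$ effective is needed.
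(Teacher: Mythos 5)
Your proposal is correct and follows essentially the same route as the paper: Riemann--Roch on the rational surface $Y$ gives $\chi(L) = 1 + \tfrac{1}{2}(L^2 + L\cdot D) \geq 1$, and Serre duality plus the effectivity of $D$ kills $h^2(L) = h^0(-D-L)$, yielding $h^0(L) \geq \chi(L) \geq 1$. The only cosmetic difference is that the paper disposes of $h^0(-D-L)$ in one stroke by noting $H\cdot(-D-L) < 0$ for $H$ ample (since $H\cdot D > 0$ strictly), which avoids your separate treatment of the case $L = 0$.
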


\begin{proof} 
Let $L \in \Pic(Y)$ be nef. By Riemann-Roch, we have
\begin{align*}
\chi(L) &= \chi(\mathcal{O}_{Y}) + \displaystyle{\frac{1}{2} L(L - K_{Y})} \\
&= 1 + \displaystyle{\frac{1}{2}(L^{2} + L \cdot D)} \\
&\geq 1,
\end{align*}
since $L$ being nef and $D$ being effective give $L \cdot D \geq 0$ and $L$ nef gives $L^{2} \geq 0$. On the other hand, we have
\begin{align*}
\chi(L) &= h^{0}(L) - h^{1}(L) + h^{2}(L) \\
&\leq h^{0}(L) + h^{2}(L)
\end{align*}

Next we show that $h^{2}(L) = 0$. By Serre Duality, we have $h^{2}(L) = h^{0}(K_{Y} - L) = h^{0}(-D-L)$. If $H$ is ample and $L$ is nef and $D$ is effective, then we have $H \cdot D > 0$ and $H \cdot L \geq 0$. Then $H \cdot (-D - L) < 0$, so $h^{0}(-D-L) = 0$. Thus $h^{0}(L) \geq \chi(L) \geq 1$, and therefore $L$ is linearly equivalent to an effective divisor.
\end{proof}

\begin{definition}
\label{def:cuspSingularity}
A {\it cusp singularity} is a surface singularity whose minimal resolution is a cycle of smooth rational curves that meet transversally. That is, the exceptional locus of the minimal resolution of a cusp singularity is a union of copies of $\mathbb{P}^{1}$ with nodal singularities such that the dual graph is a cycle.
\end{definition}

Given a log Calabi-Yau surface $(Y, D)$ with $D$ having a negative definite intersection matrix $(D_{i} \cdot D_{j})$, it is possible to contract $D$ to a cusp singularity $p$ (by a theorem of Grauert on the contractibility of a negative definite configuration of curves on a smooth complex surface in the analytic category, \cite{G62}). Let $f: Y \rightarrow Y^{\prime}$ be the morphism contracting $D$ to a point. Then we have the induced isomorphism 
\begin{center}
$Y \setminus D \cong Y^{\prime} \setminus \{p\}$,
\end{center}
and $f^{-1}(p) = D$. In addition, the surface $Y^{\prime}$ is normal and compact (for the usual Euclidean topology). We note that although $Y$ is a projective variety, the new surface $Y^{\prime}$ is in general no longer a projective variety, but a normal, analytic space. We make the following definitions.

\begin{definition} We define the nef effective cone of $Y^{\prime}$ in the following way:
\begin{center}
$\Nefe(Y^{\prime}) := \Nefe(Y) \cap \langle D_{1}, \dots, D_{n} \rangle^{\bot}$.
\end{center}
\end{definition}

\begin{remark}
\label{rem:equivalentNefYprimeDef}
Equivalently $\Nefe(Y^{\prime}) = \Nef(Y^{\prime}) \cap \Eff(Y^{\prime})$, where
\begin{center}
$\Nef(Y^{\prime}) := \{ L \in Cl(Y^{\prime}) \otimes \mathbb{R} \; \vert \; L \cdot C \geq 0 \text{ for all curves } C \subset Y^{\prime}\}$,
\end{center}
and we use Mumford's intersection product on a normal surface $Y^{\prime}$ (\cite{M61}, p.17). Note that $Y^{\prime}$ is not $\mathbb{Q}$-factorial in general (i.e., there may exist divisors which are not $\mathbb{Q}$-Cartier).
\end{remark}

\begin{definition}
\label{def:autYD}
An {\it isomorphism of log Calabi-Yau surfaces} $(Y^{1}, D^{1})$ and $(Y^{2}, D^{2})$ is an isomorphism $\theta: Y^{1} \rightarrow Y^{2}$, with the property that $\theta(D^{1}_{i}) = \theta(D^{2}_{i})$ for each boundary component $D^{k}_{i}$ of $D^{k}$ for $k = 1, 2$, and $\theta$ respects the orientations of $D^{1}$ and $D^{2}$ (automatic for $n \geq 3$).
\end{definition}

\begin{definition}
\label{def:admissibleGroup}
Given any log Calabi-Yau surface $(Y, D)$, the {\it admissible group} of $Y$ is defined as follows:
\begin{align*}
\Adm = \{ \theta \in \Aut(\Pic(Y)) \; \vert \; &\theta([D_{i}]) = [D_{i}] \text{ for all } i = 1, \dots, n \text{ and } \\
 &\theta(\Nef(Y_{gen})) = \Nef(Y_{gen}) \}.
\end{align*}
\end{definition}

\begin{remark}
$\Adm$ is identified with the monodromy group for $(Y, D)$ (\cite{GHK15b}, Theorem 5.15).
\end{remark}

\begin{definition}
\label{def:fundamentalDomainOfGroupAction}
Let $\Gamma$ be a group and $X$ a topological space. Suppose that $\Gamma$ acts on $X$ by homeomorphisms. We say that a closed subset $D \subset X$ is a {\it fundamental domain} for the action of $\Gamma$ on $X$ if the following are true:
\begin{enumerate}
\item for all $x \in X$, there exists $d \in D$ and $\gamma \in \Gamma$ such that $\gamma(d) = x$; and
\item for all $\gamma_{1}, \gamma_{2} \in \Gamma$ such that $\gamma_{1} \neq \gamma_{2}$, the intersection $\gamma_{1}D \cap \gamma_{2}D$ has empty interior.
\end{enumerate}
\end{definition}

\begin{definition}
\label{def:periodPoint}
Given a log Calabi-Yau surface $(Y, D)$, the {\it period point} is defined to be the homomorphism $\phi: \langle D_{1}, \dots, D_{n} \rangle^{\bot} \rightarrow \mathbb{C}^{\ast}$, where a line bundle $L \in \langle D_{1}, \dots, D_{n} \rangle^{\bot}$ is sent to $\theta([L \vert_{D}]) \in \mathbb{C}^{\ast}$, where $\theta: \Pic^{0}(D) \xrightarrow{\sim} \mathbb{C}^{\ast}$ is the isomorphism determined by the given orientation of $D$ (as explained in \cite{GHK15b}, Lemma 2.1). Here $\Pic^{0}(D)$ is the kernel of the map $c_{1}: \Pic(D) \rightarrow H^{2}(D, \mathbb{Z}) = \mathbb{Z}^{n}$, given by $L \mapsto (\deg L \vert_{D_{i}})_{i=1}^{n}$.
\end{definition}

By \cite{F15}, Proposition 3.12, the homomorphism $\phi$ is the extension class of the mixed Hodge structure on $H_{2}(U, \mathbb{C})$, where we take $U = Y \setminus D$. There is an exact sequence (\cite{L81}, Chapter I, Section 5.1):
\begin{center}
$0 \rightarrow \mathbb{Z} \rightarrow H_{2}(U) \rightarrow \langle D_{1}, \dots, D_{n} \rangle^{\bot} \rightarrow 0.$
\end{center}

There exists a unique log Calabi-Yau surface in each deformation type such that $\phi(\alpha) = 1$ for all $\alpha \in \langle D_{1}, \dots, D_{n} \rangle^{\bot}$, i.e., the mixed Hodge structure on $H_{2}(U)$ is split. This follows from the Torelli theorem (\cite{GHK15b}, \S 5; and \cite{F15}, Corollary 9.7). We denote this log Calabi-Yau surface by $(Y_{e}, D_{e})$.

\begin{definition}
\label{def:rootSystem}
Given a log Calabi-Yau surface $(Y, D)$, the associated {\it root system} is the subset of $\Pic(Y)$ defined by:
\begin{center}
$\Phi = \{ \alpha \in \langle D_{1}, \dots, D_{n} \rangle^{\bot} \; \vert \; \alpha^{\bot} \cap \Int(\Nef(Y_{gen})) \neq \emptyset \text{ and } \alpha^{2} = -2 \}$
\end{center}
\end{definition}

\begin{definition}
\label{def:WeylGroup1}
We define the {\it Weyl group} of the root system $\Phi \subset \Pic(Y)$ as follows:
\begin{center}
$W = \langle s_{\alpha} \; \vert \; \alpha \in \Phi \rangle \subset \Aut(\Pic(Y), \cdot)$,
\end{center}
where the generators $s_{\alpha}(\beta) = \beta + (\alpha \cdot \beta) \alpha$ are the reflections in the hyperplanes $\alpha^{\bot}$ for $\alpha \in \Phi$.
\end{definition}

\begin{definition}
\label{def:simpleRoots}
Given $(Y_{e}, D_{e})$, we define the {\it simple roots} as the set:
\begin{center}
$\Delta = \{ [C] \; \vert \; C \subset Y_{e} \setminus D_{e} \text{ is a $(-2)$-curve} \}$.
\end{center}
\end{definition}

\begin{proposition}
\label{prop:WeylGroupPresentation} 
( \cite{GHK15b}, Proposition 3.4) The set $\Delta$ is contained in $\Phi$ and the Weyl group $W$ is generated by the reflections $s_{\delta}$ for $\delta \in \Delta$, i.e., 
\begin{center}
$W = \langle s_{\delta} \; \vert \; \delta \subset \Delta \rangle$.
\end{center}
\end{proposition}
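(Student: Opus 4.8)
\emph{Proof proposal.} The plan is to reduce the statement to the classical structure theory of reflection groups, the essential geometric input being that the period point of $(Y_{e},D_{e})$ is trivial. Write $\Lambda = \langle D_{1},\dots,D_{n}\rangle^{\bot}\subseteq\Pic(Y)$; this is an indefinite (hyperbolic) lattice, and every reflection $s_{\alpha}$ with $\alpha\in\Phi$ fixes each $D_{i}$, since $s_{\alpha}(D_{i}) = D_{i}+(\alpha\cdot D_{i})\alpha = D_{i}$ because $\alpha\in\Lambda$, and hence preserves $\Lambda$. Thus $W$ is a group of isometries of $\Lambda$ generated by reflections in the hyperplanes $\alpha^{\bot}$, $\alpha\in\Phi$. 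Once we know $\Delta\subseteq\Phi$, the inclusion $W_{\Delta}:=\langle s_{\delta}:\delta\in\Delta\rangle\subseteq W$ is automatic, and for the reverse inclusion it is enough to show that every $\alpha\in\Phi$ is $W_{\Delta}$-conjugate, up to sign, to an element of $\Delta$: then $s_{\alpha}=w\,s_{\delta}\,w^{-1}\in W_{\Delta}$.

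\textbf{Step 1: $\Delta\subseteq\Phi$.} Let $C\subseteq Y_{e}\setminus D_{e}$ be a $(-2)$-curve. Then $[C]^{2}=-2$ and $[C]\cdot D_{i}=0$ for all $i$ because $C$ is disjoint from $D_{e}$, so $[C]\in\Lambda$; it remains to produce a class in $[C]^{\bot}\cap\Int(\Nef(Y_{gen}))$. First, $\Nef(Y_{e})\subseteq\Nef(Y_{gen})$: by upper semicontinuity of $h^{0}$ along a one-parameter family through $Y_{e}$ with generic fibre in the generic locus, the classes of the interior $(-1)$-curves of $Y_{gen}$ stay effective on $Y_{e}$, so any class nef on $Y_{e}$ satisfies all the inequalities cutting out $\Nef(Y_{gen})$ in the description recalled above. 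Since $C$ has negative self-intersection, $[C]$ spans an extremal ray of $\Curv(Y_{e})$, so $[C]^{\bot}$ supports a facet $F$ of $\Nef(Y_{e})$. This facet is not contained in $\partial\,\Nef(Y_{gen})$: the facets of $\Nef(Y_{gen})$ span the hyperplanes $D_{i}^{\bot}$ and $\beta^{\bot}$ for interior $(-1)$-curves $\beta$, together with the quadric $\{L^{2}=0\}$, which is not a hyperplane, and none of these equals $[C]^{\bot}$ because $D_{i}$ and $\beta$ pair nontrivially with $D$ whereas $[C]\in\Lambda$. Hence the relative interior of $F$ meets $\Int(\Nef(Y_{gen}))$; choosing $H\in F$ there with $H^{2}>0$, $H\cdot D_{i}>0$ and $H\cdot\beta>0$ for all interior $(-1)$-curves $\beta$ (possible by a small perturbation inside the indefinite hyperplane $[C]^{\bot}$) gives the desired class, so $[C]\in\Phi$.

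\textbf{Step 2: positive roots decompose into simple roots on $Y_{e}$.} Because the mixed Hodge structure on $H_{2}(Y_{e}\setminus D_{e})$ is split, the period point $\phi$ is trivial, so for each $\alpha\in\Phi$ the line bundle $\mathcal{O}_{Y_e}(\alpha)$ restricts trivially to $D_{e}$ in $\Pic^{0}(D_{e})$; a Riemann--Roch estimate exactly as in Lemma \ref{YgenLnefImpliesEffective}, using $\alpha^{2}=-2$ and Serre duality to kill $h^{2}$, then yields $h^{0}(\alpha)+h^{0}(-\alpha)>0$, so one of $\pm\alpha$ is effective on $Y_{e}$; fixing the sign by pairing with an ample class defines a set $\Phi^{+}$ of positive roots with $\Phi=\Phi^{+}\sqcup(-\Phi^{+})$. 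Writing an effective representative of $\alpha\in\Phi^{+}$ as $\sum a_{j}C_{j}$ with the $C_{j}$ distinct irreducible and $a_{j}>0$, one checks that each $C_{j}$ lies in $Y_{e}\setminus D_{e}$ (using $\alpha\cdot D_{i}=0$ for all $i$ together with the structure of the cycle $D_{e}$), hence is an interior curve with $C_{j}^{2}=2p_{a}(C_{j})-2\ge-2$ by adjunction; since $\alpha^{2}=\sum_{j,k}a_{j}a_{k}(C_{j}\cdot C_{k})=-2<0$, some $C_{j}$ must be a $(-2)$-curve, i.e.\ an element of $\Delta$. Subtracting that $(-2)$-curve class and checking, again via Riemann--Roch, that the remainder stays effective, an induction on $\sum a_{j}$ shows $\alpha\in\mathbb{Z}_{\ge0}\langle\Delta\rangle$, and that $\Delta$ is precisely the set of indecomposable positive roots. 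This is the crux of the proof: it is exactly where the special geometry of $(Y_{e},D_{e})$, the Torelli theorem of \cite{GHK15b}, and the adjunction-classification of interior curves are used, and it is the step most in need of care — in particular ruling out boundary components among the $C_{j}$, and justifying the "remainder stays effective" claim.

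\textbf{Step 3: height induction.} For $\alpha=\sum_{\delta\in\Delta}c_{\delta}\delta\in\Phi^{+}$ put $\operatorname{ht}(\alpha)=\sum_{\delta}c_{\delta}>0$. If $\operatorname{ht}(\alpha)=1$ then $\alpha\in\Delta$. If $\operatorname{ht}(\alpha)\ge2$, then $-2=\alpha^{2}=\sum_{\delta}c_{\delta}(\alpha\cdot\delta)$ forces $\alpha\cdot\delta_{0}<0$ for some $\delta_{0}\in\Delta$; since $\alpha\ne\delta_{0}$, the vector $s_{\delta_{0}}(\alpha)=\alpha+(\alpha\cdot\delta_{0})\delta_{0}$ is again a positive root — it is a $(-2)$-vector of $\Lambda$, it lies in $\Phi$ because $W_{\Delta}$ permutes $\Phi$, and its $\delta$-coefficients for $\delta\ne\delta_{0}$ are unchanged and nonnegative, which forces all of them nonnegative — of strictly smaller height $\operatorname{ht}(\alpha)+(\alpha\cdot\delta_{0})$. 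By induction $\alpha$ is $W_{\Delta}$-conjugate to a simple root, so $s_{\alpha}\in W_{\Delta}$; hence $W=W_{\Delta}$, which is the assertion. The one remaining technical point, needed to make this induction rigorous, is the claim that the hyperplane arrangement $\{\alpha^{\bot}:\alpha\in\Phi\}$ is $W$-invariant, equivalently that $\Nef(Y_{gen})$ lies in the closure of the $W$-orbit of $\Nef(Y_{e})$ inside the positive cone of $\Lambda$; with that in hand $W$ is a hyperbolic Coxeter group with fundamental chamber $\Nef(Y_{e})$ and simple walls indexed by $\Delta$, and everything above is the standard structure theory.
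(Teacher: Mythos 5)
First, a point of comparison: the paper does not prove this proposition at all --- it is imported verbatim from \cite{GHK15b} (Proposition 3.4, which in turn rests on their Lemma 3.3 and Theorem 3.2) --- so there is no in-paper argument to measure yours against. Your outline follows the classical reflection-group strategy (simple roots as walls of a fundamental chamber, every root $W_{\Delta}$-conjugate to a simple one by height induction), which is indeed the shape of the argument in \cite{GHK15b} and in Looijenga's earlier work; Steps 1 and 3 are essentially standard once Step 2 is secured.

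The genuine gap is at the crux you yourself flag, and part of it is not merely left open but mis-deduced. For $\alpha \in \Phi$ one has $\chi(\alpha) = 1 + \tfrac{1}{2}(\alpha^{2} + \alpha \cdot D) = 0$, so Riemann--Roch plus Serre duality to kill $h^{2}$ yields only $h^{0}(\alpha) = h^{1}(\alpha)$, which is vacuous; the deduction cannot go ``exactly as in Lemma \ref{YgenLnefImpliesEffective},'' because that lemma's inequality $\chi(L) \geq 1$ uses nefness of $L$, which $\alpha$ certainly lacks. The actual argument (\cite{GHK15b}, Lemma 3.3) uses the split mixed Hodge structure in an essential computational way: assuming neither of $\pm\alpha$ is effective, one gets $h^{0}(\alpha) = 0$ and $h^{2}(\alpha) = h^{0}(-D-\alpha) = 0$, and then the restriction sequence $0 \to \mathcal{O}_{Y}(\alpha - D) \to \mathcal{O}_{Y}(\alpha) \to \mathcal{O}_{D} \to 0$ (valid because $\mathcal{O}_{Y}(\alpha)\vert_{D} \cong \mathcal{O}_{D}$) forces $h^{1}(\alpha - D) \geq 1$; since $\chi(\alpha - D) = 0$ and $h^{2}(\alpha - D) = h^{0}(-\alpha) = 0$, this gives $h^{0}(\alpha - D) \geq 1$, contradicting the non-effectivity of $\alpha$. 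Your Step 2 names the right ingredient (triviality of the period point) but does not actually use it in the deduction. Similarly, the decomposition of a positive root into simple roots should not be run by ``subtract a $(-2)$-curve and check the remainder stays effective'': the remainder of a root minus a simple root is in general not a root, and effectivity of the remainder is exactly what is unproven. The workable route is to show that \emph{every} irreducible component of an effective representative of $\alpha$ is an interior $(-2)$-curve, by pairing against some $H \in \alpha^{\bot} \cap \Int(\Nef(Y_{gen}))$ and using adjunction; but here one must confront the fact that such an $H$ need not be nef on $Y_{e}$ (since $\Nef(Y_{e}) = \Nef(Y_{gen}) \cap (\delta \geq 0)$ is a proper subcone), so the terms $H \cdot C_{j}$ are not a priori nonnegative and the conclusion ``each $H \cdot C_{j} = 0$'' does not follow without first moving $H$ into the fundamental chamber by an element of $W$ --- which is where \cite{GHK15b}, Theorem 3.2 enters. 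Until Step 2 is closed, the positivity bookkeeping in Step 3 (that $s_{\delta_{0}}(\alpha)$ remains a positive root) has nothing to stand on.
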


\noindent By \cite{GHK15b}, Lemma 2.15,
\begin{center}
$\Nef(Y_{e}) = \Nef(Y_{gen}) \; \bigcap \; (\delta \geq 0 \text{ for all } \delta \in \Delta)$.
\end{center}

\begin{remark}
The Weyl group is a normal subgroup of $\Adm$. (Proof: it follows from Definitions \ref{def:admissibleGroup} and \ref{def:rootSystem} that $\Adm$ preserves $\Phi$. If $g \in \Adm$ and $\alpha \in \Phi$, then $g s_{\alpha} g^{-1} = s_{g(\alpha)}$, which implies that $W \lhd \Adm$).
\end{remark}

By \cite{GHK15b}, Theorem 3.2, the group $W$ acts on $\Nefe(Y_{gen})$ with fundamental domain $\Nefe(Y_{e})$. This is called the {\it fundamental chamber} in $\Nef(Y_{gen})$. By \cite{GHK15b}, Theorem 5.1, there is an exact sequence
\begin{center}
$1 \rightarrow K \rightarrow \Aut(Y_{e}, D_{e}) \rightarrow \Adm / W \rightarrow 1$,
\end{center}
where $K$ is the kernel of the action of $\Aut(Y_{e}, D_{e})$ on $\Pic(Y)$.

%
%

\section{TOOLS}
\label{Tools}
Here we include some main results that we use in the proof of our results.
\label{Tools}

\begin{theorem} 
\label{thm:ghkTorelli}
( \cite{GHK15b}, Theorem 1.8, The global Torelli theorem for log Calabi-Yau surfaces). Suppose that $(Y^{1}, D^{1})$ and $(Y^{2}, D^{2})$ are log Calabi-Yau surfaces. Consider the following three statements:
\begin{enumerate}
\item $\theta: \Pic(Y^{1}) \rightarrow \Pic(Y^{2})$ is an isometry such that $\theta([D_{i}^{1}]) = [D_{i}^{2}]$ for $i = 1, \dots, n$.
\item $\theta(L)$ is ample for some ample $L$ on $Y^{1}$.
\item $\phi_{Y^{2}} \circ \theta = \phi_{Y^{1}}$, where $\phi_{Y}: \langle D_{1}, \dots, D_{n} \rangle^{\bot} \rightarrow \mathbb{C}^{\ast}$ is the period point of $Y$.
\end{enumerate}
$(1), (2)$ and $(3)$ hold if and only if $\theta = f^{\ast}$ for some isomorphism $f: (Y^{2}, D^{2}) \rightarrow (Y^{1}, D^{1})$.
\end{theorem}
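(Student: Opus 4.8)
The plan is to prove the two directions separately, with essentially all the work in the converse. The forward implication is formal: if $\theta = f^{\ast}$ for an isomorphism of pairs $f \colon (Y^{2}, D^{2}) \to (Y^{1}, D^{1})$, then pullback automatically preserves the intersection form and sends $[D_{i}^{1}]$ to $[D_{i}^{2}]$, so $(1)$ holds; $f^{\ast}$ of an ample class is ample because $f$ is an isomorphism, so $(2)$ holds; and since $f$ restricts to an isomorphism of open surfaces $Y^{2} \setminus D^{2} \xrightarrow{\sim} Y^{1} \setminus D^{1}$, it is compatible with the mixed Hodge structures on $H_{2}$ and hence with the period points, giving $(3)$. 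I would not attempt to prove uniqueness of $f$ — there is none in general, since the kernel $K$ of the action on $\Pic$ may be nontrivial.

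For the converse I would use the toric-model description of log Calabi--Yau surfaces. First, reduce to the toric situation: since $\theta$ is an isometry with $\theta([D_{i}^{1}]) = [D_{i}^{2}]$, the two boundary cycles have the same self-intersection sequence and $\Pic(Y^{1})$, $\Pic(Y^{2})$ have the same rank, so by the Gross--Hacking--Keel structure result each pair, after a sequence of corner (toric) blow-ups $(\tilde Y^{j}, \tilde D^{j}) \to (Y^{j}, D^{j})$, acquires a toric model $(\tilde Y^{j}, \tilde D^{j}) \to (\bar Y^{j}, \bar D^{j})$, realizing $\tilde Y^{j}$ as the blow-up of a smooth toric surface at finitely many points (possibly infinitely near) on the open torus orbits of $\bar D^{j}$. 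Corner blow-ups do not alter the open surface, hence do not alter the period point, so it is harmless to work upstairs. Using property $(2)$ — $\theta$ preserves ampleness, hence the chamber structure of the nef cone and the associated birational contractions — I would lift $\theta$ to an isometry $\tilde\theta$ of the blown-up Picard lattices that matches the two toric models, carrying the classes of toric boundary divisors and of exceptional curves on one side to those on the other.

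The crucial step is to recover the actual blow-up points from $\theta$ and the period point. Over a fixed component $\bar D_{i} \cong \mathbb{P}^{1}$ with its canonical $\mathbb{G}_m$-action, the exceptional classes over $\bar D_{i}$ form a standard configuration and $\tilde\theta$ pins down their combinatorial matching; what it leaves free is the position in $\mathbb{G}_m \subset \bar D_{i}$ of each blown-up point, up to the residual torus $\Aut(\bar Y, \bar D)^{\circ}$. But these positions, modulo the torus, are precisely what $\phi$ records: a line bundle in $\langle D_{1}, \dots, D_{n}\rangle^{\bot}$, restricted to $D$ and evaluated under $\Pic^{0}(D) \cong \mathbb{C}^{\ast}$, is, up to the fixed orientation conventions, a monomial in the blow-up coordinates. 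Hence property $(3)$, $\phi_{Y^{2}} \circ \theta = \phi_{Y^{1}}$, forces the two collections of blow-up points to coincide after applying an element of $\Aut(\bar Y, \bar D)^{\circ}$. Composing that torus element with the identification of toric models gives an isomorphism $(\tilde Y^{1}, \tilde D^{1}) \xrightarrow{\sim} (\tilde Y^{2}, \tilde D^{2})$; it contracts the corner exceptional curves compatibly on the two sides, so it descends to an isomorphism $f \colon (Y^{2}, D^{2}) \to (Y^{1}, D^{1})$, and since $f^{\ast}$ and $\theta$ agree on the toric divisor classes and the exceptional classes — which span — we conclude $f^{\ast} = \theta$.

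The step I expect to be the main obstacle is the bookkeeping in the last paragraph: making the dictionary between the period point and the affine positions of the interior blow-up points precise, and checking that $\theta$ genuinely respects one of the (a priori several) toric models rather than merely the abstract lattice. This is exactly where property $(2)$ is indispensable, together with a classification of the $(-1)$- and $(-2)$-classes, and where one must verify independence of the choice of corner blow-ups — which follows once more because corner blow-ups leave $Y \setminus D$, and hence $\phi$, unchanged.
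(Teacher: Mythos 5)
The paper does not prove this statement: Theorem \ref{thm:ghkTorelli} is imported verbatim from \cite{GHK15b}, Theorem 1.8, and is used as a black box, so there is no internal proof to compare your argument against. Your outline does follow the strategy of the original Gross--Hacking--Keel proof (toric models after corner blow-ups, and the computation of the period point as a monomial in the coordinates of the interior blow-up points on the torus orbits of $\bar{D}$), and your forward implication and your remark that no uniqueness of $f$ is claimed are both fine.

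As a proof, however, the sketch has a genuine gap exactly at the step you flag as ``bookkeeping.'' From (1) and (2) alone you cannot conclude that $\theta$ identifies the chamber structures or carries an exceptional configuration (the classes of the exceptional curves of a toric model) on one side to an exceptional configuration on the other. The nef cone of $(Y,D)$ is cut out inside $\Nef(Y_{gen})$ by the effective $(-2)$-classes, and which roots $\alpha\in\Phi$ are effective is governed by the period point (namely $\phi(\alpha)=1$); so condition (3) is already needed to show $\theta(\Nef(Y^{1}))=\Nef(Y^{2})$, not only in the final step of matching blow-up positions. Moreover, even once $\theta$ preserves the nef cone, the assertion that it can be adjusted to respect a common toric model rests on nontrivial transitivity statements (the action of the Weyl group, respectively of the group generated by elementary transformations, on markings/toric models of a fixed deformation type) that constitute the bulk of the argument in \cite{GHK15b}; they cannot be dismissed as combinatorial matching. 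Finally, the claim that $\phi$ determines the blow-up points up to the torus $\Aut(\bar{Y},\bar{D})^{\circ}$ requires proving that the map from configurations of points to $\Hom(\langle D_{1},\dots,D_{n}\rangle^{\perp},\mathbb{C}^{\ast})$ has fibers which are exactly single torus orbits, including the degenerate cases of infinitely near points; this surjectivity-plus-fiber statement is itself a theorem, not an observation. So the plan is the right one, but the three steps above are the content of the theorem rather than obstacles to be cleared afterwards.
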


\begin{proposition} (\;\cite{EF16}, Proposition 1.5).
\label{prop:EngelFriedman} Let $(Y_{gen}, D_{gen})$ be a generic log Calabi-Yau surface, where $D_{gen}$ has at least three boundary components. If $B$ is a divisor on $Y_{gen}$ with nonnegative integer coefficients, then $B$ is linearly equivalent to a divisor of the form
\begin{center}
$\displaystyle{\sum a_{i}D_{i} + \sum b_{j}E_{j}}$,
\end{center}
where the $E_{j}$'s are disjoint interior $(-1)$-curves and $a_{i}, b_{j}$ are nonnegative integers.
\end{proposition}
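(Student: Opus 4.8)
The plan is to reduce the statement to the case of a toric pair, where it is elementary, using the structure theory of log Calabi--Yau surfaces (toric models), and then to transport the result back to $(Y_{gen},D_{gen})$. First I would use that, after a finite sequence of corner blowups $g\colon(\tilde Y,\tilde D)\to(Y_{gen},D_{gen})$, the pair $(\tilde Y,\tilde D)$ admits a toric model $f\colon(\tilde Y,\tilde D)\to(\bar Y,\bar D)$ onto a smooth projective toric surface with its toric boundary, expressed as a composition of blowups at non-torus-fixed points of the successive boundaries (\cite{GHK15a}). Since $g$ is an isomorphism over $Y_{gen}\setminus D_{gen}$, the surface $\tilde Y$ again contains no interior $(-2)$-curve, and I would use this to show that the exceptional locus of $f$ is a disjoint union of $(-1)$-curves $\tilde E_{1},\dots,\tilde E_{m}$, each meeting $\tilde D$ transversally at one point: if two of them met, or if a point of one of them lying on $\tilde D$ were blown up, an interior $(-2)$-curve would be produced. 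In particular $f^{*}\bar D_{i}=\tilde D_{i}+\sum_{\iota(j)=i}\tilde E_{j}$, where $\tilde E_{j}$ lies over a point of $\bar D_{\iota(j)}$.

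On the toric surface $\bar Y$ every effective divisor class has an effective torus-invariant representative, so the analogue of the Proposition is immediate there: any effective divisor is linearly equivalent to $\sum_{i}\bar a_{i}\bar D_{i}$ with $\bar a_{i}\in\mathbb{Z}_{\geq 0}$. To transport this back, given an effective $B$ on $Y_{gen}$ I would take its strict transform $\tilde B$ on $\tilde Y$ (so $\tilde B$ is effective and $g_{*}\tilde B=B$), peel off the curves $\tilde E_{j}$ that occur in $\tilde B$ one at a time---each is itself a disjoint interior $(-1)$-curve, hence contributes only an allowed term---and thereby reduce to the case where no $\tilde E_{j}$ occurs in $\tilde B$. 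In that case $\tilde B$ is the strict transform of the effective divisor $f_{*}\tilde B$ on $\bar Y$; writing $f_{*}\tilde B\sim\sum_{i}\bar a_{i}\bar D_{i}$ and pulling back via $f$ gives a linear equivalence $\tilde B\sim\sum_{i}\bar a_{i}\tilde D_{i}+\sum_{j}c_{j}\tilde E_{j}$ on $\tilde Y$ in which the $\tilde E_{j}$ are disjoint interior $(-1)$-curves and $c_{j}\in\mathbb{Z}$. Pushing down by $g$ sends each $\tilde D_{i}$ to a boundary component $D_{i}$ of $D_{gen}$ (or to zero, for a corner-exceptional curve) and each $\tilde E_{j}$ to an interior $(-1)$-curve of $Y_{gen}$, disjointness being preserved since $g$ is an isomorphism near these curves; so $B=g_{*}\tilde B$ has the asserted shape---\emph{provided} the integers $c_{j}$ turn out to be nonnegative. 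Integrality itself costs nothing: on the rational surface $Y_{gen}$ numerical and linear equivalence coincide, so a numerical identity with integer coefficients is a linear equivalence.

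The hard part will be exactly this nonnegativity of the $c_{j}$. The naive pull-back can produce negative values, since the multiplicity of $f_{*}\tilde B$ at the point lying below $\tilde E_{j}$ may exceed the coefficient $\bar a_{\iota(j)}$ of $\bar D_{\iota(j)}$ in the chosen torus-invariant representative. Correcting this is where the two standing hypotheses enter. Genericity of $(Y_{gen},D_{gen})$ eliminates the configurations (interior $(-2)$-curves) that the blowups would otherwise create and that would break the bookkeeping; and the condition that $D_{gen}$ have $n\geq 3$ components enters through the geometry of the anticanonical cycle---a hyperplane-section-type constraint, of the kind that forces a line on a cubic surface to meet a triangle of coplanar lines---which is what guarantees that enough interior $(-1)$-curves through the relevant points are available. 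Concretely I would either replace the torus-invariant representative of $f_{*}\tilde B$ by a more suitable one, or rewrite each term $c_{j}\tilde E_{j}$ with $c_{j}<0$ as a nonnegative combination of boundary components and other interior $(-1)$-curves; making this argument rigorous is the main technical burden.

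A second, essentially equivalent route is to reduce first to the case that $B$ is nef. If $B$ is not nef it has an irreducible component $C$ with $B\cdot C<0$, which is either a boundary component $D_{j}$---then peel off $kD_{j}$ and induct on $B\cdot A$ for a fixed ample $A$---or an interior curve, hence (by genericity, via the adjunction argument underlying Lemma~\ref{YgenLnefImpliesEffective}) a $(-1)$-curve, which one contracts, the contraction being again a generic log Calabi--Yau surface with $n$ boundary components, so that one may induct on the Picard number; the base case, no interior $(-1)$-curves, gives a toric pair. For $B$ nef and big one can invoke the log canonical base-point-free theorem---applicable since $K_{Y_{gen}}+D_{gen}=0$---to conclude that $B$ is semiample, contract the locus where $B$ vanishes, and finish by induction; for $B$ nef with $B^{2}=0$ one checks $B\cdot D_{gen}=0$, analyses the associated fibration, and concludes (using $n\geq 3$) that $D_{gen}$ is an $I_{m}$ elliptic fibre and $B$ a nonnegative multiple of it. Either way the crux is the same---controlling the interior $(-1)$-curve contributions---and that is the step I expect to cost the most.
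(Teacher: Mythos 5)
First, a point of comparison: the paper does not prove this statement. Proposition~\ref{prop:EngelFriedman} is imported verbatim from \cite{EF16}, Proposition~1.5, and is used as a black box; the only proof the paper supplies in this neighbourhood is of the real-coefficient extension (Proposition~\ref{prop:EF-realCoefficients}), which takes the integer statement as its starting point. So your attempt has to stand on its own, and as written it does not: it is a plan with the central step missing, and you say so yourself. In the toric-model route, everything up to the final caveat is standard bookkeeping --- for a generic pair the toric model $f\colon(\tilde Y,\tilde D)\to(\bar Y,\bar D)$ does have exceptional locus a disjoint union of interior $(-1)$-curves, effective classes on $\bar Y$ do have torus-invariant representatives, and integrality is free on a rational surface. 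But the coefficient you must control is $c_{j}=\bar a_{\iota(j)}-\operatorname{mult}_{q_{j}}(f_{*}\tilde B)$, and since $\operatorname{mult}_{q_{j}}(f_{*}\tilde B)=\tilde B\cdot\tilde E_{j}$ can be arbitrarily large (take $\tilde B$ an irreducible curve meeting $\tilde E_{j}$ with high multiplicity) while $\bar a_{\iota(j)}$ is bounded by the chosen torus-invariant representative, the ``provided the $c_{j}$ are nonnegative'' clause is not a loose end --- it \emph{is} the proposition. The appeals to ``genericity eliminates the configurations that would break the bookkeeping'' and to an unspecified ``hyperplane-section-type constraint'' coming from $n\geq 3$ name the hypotheses without using them; no mechanism is given for either replacing the torus-invariant representative or rewriting a term $c_{j}\tilde E_{j}$ with $c_{j}<0$ as an allowed combination.

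Your second route has the same hole plus a concrete false step. Peeling off components $C$ with $B\cdot C<0$ (necessarily boundary components or interior $(-1)$-curves, by the adjunction argument you cite) and contracting is fine, but the terminal case is $B$ nef, and your analysis of it is wrong: a nef divisor with $B^{2}=0$ need not satisfy $B\cdot D_{gen}=0$. For example, on $(\mathbb{P}^{1}\times\mathbb{P}^{1},\bar D)$ with its toric boundary, a fiber class $F$ of either ruling is nef with $F^{2}=0$ and $F\cdot\bar D=2$ (indeed $F\sim\bar D_{1}$). So the dichotomy ``nef and big versus isotropic supported on an $I_{m}$ fibre'' misses exactly the classes the proposition is about, namely nef classes of square zero that are nonnegative combinations of boundary components. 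In short: the reductions you describe are plausible and some of them do appear in the literature on anticanonical pairs, but the step that makes the proposition true --- producing a representative with \emph{nonnegative} coefficients --- is absent from both routes, so this cannot be accepted as a proof; if you want to use the statement, cite \cite{EF16} as the paper does.
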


\begin{remark}
\label{rem:EF-realCoefficients}
Although the Engel-Friedman Proposition \ref{prop:EngelFriedman} is stated for $B$ with nonnegative {\it integer} coefficients, the statement also holds for $B$ with nonnegative {\it real} coefficients. There is a sketch of the proof in the paper (\cite{EF16}, p.55), which uses a continuity argument and the assertion that the collection of subsets
\begin{center}
$\bigg\{ \displaystyle{\sum a_{j}D_{j} + \sum b_{i}E_{i} \; \vert \; a_{j}, b_{i} \in \mathbb{R}_{\geq 0}} \bigg\}$,
\end{center}
where the $E_{i}$'s are disjoint interior $(-1)$-curves, is locally finite in $\Nefe(Y_{gen})$ in a sense that is made precise below. Since this is important for our results, we give a complete proof (Proposition \ref{prop:EF-realCoefficients}).
\end{remark}

Friedman showed in \cite{F15} that $\Adm$ acts with finitely many orbits on the set of faces of $\Nefe(Y_{gen})$ corresponding to interior $(-1)$-curves. This is stated in Theorem \ref{thm:FriedmanFiniteOrbits}.

\begin{theorem}(Friedman \cite{F15}, Theorem 9.8)
\label{thm:FriedmanFiniteOrbits}
Let $(Y, D)$ be a generic log Calabi-Yau surface. Let $\mathcal{E}(Y, D)$ be the set of all interior $(-1)$-curves of $Y$. Then the admissible group $\Adm$ acts on $\mathcal{E}(Y, D)$ and there are finitely many $\Adm$-orbits for this action.
\end{theorem}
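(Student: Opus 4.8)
The statement is a finiteness result for an arithmetic group acting on a hyperbolic lattice, so the natural setting is the Picard lattice $\Pic(Y)$, which has signature $(1,\rho-1)$ by the Hodge index theorem, with $\Adm$ acting by isometries that fix the classes $[D_{i}]$ and preserve $\Nef(Y_{gen})$. The plan is to induct on the \emph{charge} $Q(Y,D):=12-\sum_{i=1}^{n}(D_{i}^{2}+3)$ --- a nonnegative integer that drops by $1$ under contraction of an interior $(-1)$-curve --- while using that $\Adm$ already contains the Weyl group $W$, which collapses most of the orbits. The steps I would carry out are:

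\begin{enumerate}
\item \emph{Reduction by boundary component.} By adjunction an interior $(-1)$-curve $E$ has $E\cdot D=1$, so it meets exactly one component $D_{i}$, transversally and in one point, whence $[E]\cdot[D_{j}]=\delta_{ij}$. As $\Adm$ fixes every $[D_{j}]$ it preserves the partition $\mathcal{E}(Y,D)=\bigsqcup_{i=1}^{n}\mathcal{E}_{i}$ by the component met, so it is enough to bound the number of $\Adm$-orbits in a single $\mathcal{E}_{i}$. Note too that $[E]$, being effective, irreducible and of negative square, spans an extremal ray of the Mori cone $\Curv(Y)$, and $E^{\perp}$ cuts out a facet of $\Nef(Y_{gen})$ by the description of that cone recalled above.

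\item \emph{Reduction modulo the Weyl group.} Since $W\lhd\Adm$ acts on $\Nefe(Y_{gen})$ with fundamental chamber $\Nefe(Y_{e})$, the $(-1)$-curve facet $E^{\perp}\cap\Nef(Y_{gen})$, being of codimension one, meets the closure of some chamber $w\cdot\Nefe(Y_{e})$ along a facet of that chamber; replacing $E$ by $w^{-1}E$, we may assume the facet abuts $\Nefe(Y_{e})$. Via Riemann--Roch and the Torelli theorem one then identifies these classes with (essentially) the classes of interior $(-1)$-curves on the single distinguished surface $Y_{e}$, so it suffices to bound the number of $\Adm$-orbits among the latter.

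\item \emph{Induction on the charge.} For $Q=0$ the pair is toric; then the Mori cone is generated by the boundary components, so every irreducible curve other than a $D_{i}$ has nonnegative self-intersection and $\mathcal{E}(Y,D)=\emptyset$. For $Q\geq 1$, contract an interior $(-1)$-curve $F$ by $\pi\colon Y\to\hat Y$ to get a log Calabi--Yau surface $(\hat Y,\hat D)$ of charge $Q-1$ and smaller Picard number; an interior $(-1)$-curve disjoint from $F$ pushes forward to one on $\hat Y$, and one meeting $F$ becomes disjoint from a contractible interior $(-1)$-curve after a bounded (by $Q$) number of further such contractions. Every class in $\mathcal{E}_{i}$ is thus reconstructed from an interior $(-1)$-curve class on a surface of charge $<Q$ together with the bounded combinatorial record of the contractions performed, and the inductive hypothesis delivers finiteness.
\end{enumerate}

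\textbf{Main obstacle.} The delicate point is Step (3): the contraction $\pi$ is not $\Adm$-equivariant (a general element of $\Adm$ need not fix $[F]$), and a priori a single surface carries infinitely many disjoint collections of interior $(-1)$-curves whose contraction lowers the charge, so orbits cannot naively be transported along $\pi$. Turning the bookkeeping into a genuine finiteness argument forces one to use that $\Adm$ is an arithmetic group --- it lies in the stabilizer in $O(\Pic(Y))$ of $[D_{1}],\dots,[D_{n}]$, so that reduction theory controls the relevant sets of $(-1)$-classes with prescribed $[D_{j}]$-pairings --- together with an essential appeal to genericity, since it is only on $Y_{gen}$ that the set of classes represented by interior $(-1)$-curves is governed by lattice conditions alone, which is what keeps the family of admissible contractions under control. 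I expect the interplay between these non-equivariant contractions, the arithmetic group, and the genericity hypothesis to be the heart of the proof; the Weyl-chamber reduction and the blow-up bookkeeping should be comparatively routine.
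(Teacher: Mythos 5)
The paper does not prove this statement at all: it is imported verbatim from Friedman (\cite{F15}, Theorem 9.8) and used as a black box in Section \ref{Tools}, so there is no internal argument to compare yours against. Judged on its own terms, your sketch correctly identifies the ambient structures (the hyperbolic lattice $\Pic(Y)$, the partition of $\mathcal{E}(Y,D)$ by the boundary component met, the normal subgroup $W\lhd\Adm$), but it is not a proof, and the gap is exactly where you locate it --- except that it is not merely ``delicate,'' it is the entire content of the theorem. Step (2) is not a genuine reduction: the codimension-one faces of $\Nefe(Y_{e})$ dual to interior $(-1)$-curves are in general infinite in number (this is precisely why the cone conjecture is nontrivial), so ``bound the $\Adm$-orbits among the latter'' restates the original problem; moreover, replacing $E$ by $w^{-1}E$ only produces a numerical exceptional class, and you must separately invoke the characterization (valid on generic pairs) of which classes with $\alpha^{2}=-1$, $\alpha\cdot D_{j}=\delta_{ij}$ are actually represented by irreducible curves. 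Step (3) breaks at the acknowledged point: the contraction is not $\Adm$-equivariant, the pushforward of a curve meeting $F$ has self-intersection $-1+(E\cdot F)^{2}$ and so is no longer a $(-1)$-curve, and the ``bounded combinatorial record'' is never shown to be finite modulo the group action.

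The missing idea --- which is the actual engine of Friedman's proof --- is arithmetic rather than inductive: (i) $\Adm$ has \emph{finite index} in the group of integral isometries of $\Pic(Y)$ fixing each $[D_{i}]$ and preserving the relevant component of the positive cone (this is a theorem of Gross--Hacking--Keel/Friedman, not a formality); and (ii) an arithmetic subgroup of the orthogonal group of a lattice of signature $(1,\rho-1)$ acts with finitely many orbits on the set of integral vectors with prescribed self-intersection and prescribed pairings against a fixed finite set of vectors. Combining (i), (ii), and the lattice-theoretic description of interior $(-1)$-classes on a generic pair yields the finiteness directly, with no induction on the charge and no contraction bookkeeping. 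You gesture at ``reduction theory'' in your final paragraph but never state the finite-index input (i), without which nothing closes. One further caution specific to this paper's logical architecture: Theorem \ref{thm:FriedmanFiniteOrbits} and Corollary \ref{cor:FriedmanMinus1} are inputs to Theorems \ref{coneConjectureYgen}--\ref{coneConjectureYe}, so any proof you supply must not appeal to the existence of a rational polyhedral fundamental domain for $\Adm$ on $\Nefe(Y_{gen})$, which your Step (2) comes close to doing implicitly.
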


The following Corollary \ref{cor:FriedmanMinus1} by Friedman is similar to the statement above. Specifically, it is a statement about the action of $\Adm$ on the set of collections of disjoint interior $(-1)$-curves.

\begin{corollary}(Friedman \cite{F15}, Corollary 9.10)
\label{cor:FriedmanMinus1}
Given a generic log Calabi-Yau surface $(Y, D)$, let $\mathcal{E}_{k}(Y, D)$ be the set of collections $\{E_{1}, \dots, E_{k}\}$, for any $k \in \mathbb{N}$, where the curves $E_{i}$ are disjoint, interior $(-1)$-curves. Then the admissible group $\Adm$ acts on $\mathcal{E}_{k}(Y, D)$ and the number of $\Adm$ orbits for this action is finite.
\end{corollary}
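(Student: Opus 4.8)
The plan is to induct on $k$, the case $k=1$ being exactly Theorem \ref{thm:FriedmanFiniteOrbits}. It is harmless to work throughout with ordered tuples of distinct, pairwise disjoint interior $(-1)$-curves, since finiteness of the number of orbits on ordered tuples implies the same for unordered collections. For $k\ge 2$, consider the $\Adm$-equivariant map to $\mathcal{E}_{1}(Y,D)$ that remembers only the last curve of a tuple. By Theorem \ref{thm:FriedmanFiniteOrbits} there are finitely many $\Adm$-orbits on the target, so by the usual reduction it suffices to fix one interior $(-1)$-curve $E$ in each such orbit and bound the number of orbits of $\mathrm{Stab}_{\Adm}(E)$ on the fibre over $E$, i.e.\ on the set of $(k-1)$-tuples of disjoint interior $(-1)$-curves each disjoint from $E$.

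The key move is to contract $E$. Since $E$ meets $D$ transversally in a single smooth point $p$, the contraction $\pi\colon Y\to\bar Y$ is a log Calabi-Yau surface $(\bar Y,\bar D)$ with maximal boundary and the same number $n$ of components, with $D=\pi^{\ast}\bar D-E$ and $\Pic(Y)=\pi^{\ast}\Pic(\bar Y)\oplus\mathbb{Z}[E]$ an orthogonal decomposition, so that $\pi^{\ast}\Pic(\bar Y)=[E]^{\perp}$. I would first check that $(\bar Y,\bar D)$ is again generic and that no interior $(-1)$-curve of $\bar Y$ passes through $p$: in either case the strict transform of an offending curve would be either a curve of negative $D$-degree, impossible since it is not a component of $D$, or a $(-2)$-curve in $Y\setminus D$, impossible since $(Y,D)$ is generic. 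Consequently strict transform gives a bijection between the interior $(-1)$-curves of $Y$ disjoint from $E$ and \emph{all} interior $(-1)$-curves of $\bar Y$ --- on classes this is just the identification $[E]^{\perp}=\pi^{\ast}\Pic(\bar Y)$ --- and hence identifies the fibre over $E$ with $\mathcal{E}_{k-1}(\bar Y,\bar D)$, compatibly with the homomorphism $\mathrm{Stab}_{\Adm}(E)\to\Adm(\bar Y,\bar D)$ given by restriction to $[E]^{\perp}$. Granting that this homomorphism is surjective, the induction hypothesis applied to $(\bar Y,\bar D)$ produces finitely many $\mathrm{Stab}_{\Adm}(E)$-orbits on the fibre, and summing over the finitely many representatives $E$ yields finitely many $\Adm$-orbits on $\mathcal{E}_{k}(Y,D)$.

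The step I expect to be the main obstacle is precisely this surjectivity, namely the compatibility of the admissible (monodromy) group with an interior blow-down. One direction asks that restriction to $[E]^{\perp}$ carries $\mathrm{Stab}_{\Adm}(E)$ into $\Adm(\bar Y,\bar D)$ --- the substance being that an isometry of $\Pic(Y)$ fixing $[E]$ and the boundary classes still permutes the interior $(-1)$-curve classes of $\bar Y$, equivalently preserves $\Nef(\bar Y)$; the other direction asks that an element of $\Adm(\bar Y,\bar D)$, extended by $[E]\mapsto[E]$, preserves $\Nef(Y_{gen})$. In both, genericity of $(Y,D)$ is exactly what excludes the potentially pathological classes (as in the $(-2)$-curve computation above), and this is the point at which one must bring in the explicit description of $\Adm$ furnished by the Gross-Hacking-Keel Torelli theorem (Theorem \ref{thm:ghkTorelli}). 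By contrast, one thing that is \emph{not} an obstacle is the shape of the fibre inside $\mathcal{E}_{k-1}(\bar Y,\bar D)$: it is cut out by disjointness from $E$, i.e.\ orthogonality to $[E]$, which is manifestly preserved by $\mathrm{Stab}_{\Adm}(E)$, so the fibre is an honest $\mathrm{Stab}_{\Adm}(E)$-stable subset and finiteness of orbits descends to it without loss. If one preferred to bypass the lifting statement, an alternative is to argue entirely on $Y$: Theorem \ref{thm:FriedmanFiniteOrbits} already confines the fibre to finitely many $\Adm$-orbits of interior $(-1)$-curves, and one then needs only that each such orbit meets the $\mathrm{Stab}_{\Adm}(E)$-stable set $\{C:C\cdot E=0\}$ in finitely many $\mathrm{Stab}_{\Adm}(E)$-orbits --- the same blow-down analysis, packaged so as to isolate its one essential input.
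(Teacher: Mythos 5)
The paper does not actually prove this statement: it is imported verbatim as Friedman's result (\cite{F15}, Corollary 9.10) and used as a black box, so there is no internal proof to compare yours against. Judged on its own terms, your induction-by-contraction strategy is sensible, and the reductions you perform are correct: passing from unordered collections to ordered tuples, reducing to orbits of $\mathrm{Stab}_{\Adm}(E)$ on the fibre over an orbit representative $E$, and the verification that the contracted pair $(\bar Y,\bar D)$ is again generic and that no interior $(-1)$-curve of $\bar Y$ passes through the centre $p$ (an offending curve would give a $(-2)$-curve in $Y\setminus D$). But the argument does not close, and you say so yourself: everything hinges on the claim that restriction to $[E]^{\perp}\cong\Pic(\bar Y)$ defines a \emph{surjective} homomorphism $\mathrm{Stab}_{\Adm}(E)\to\Adm(\bar Y,\bar D)$, and this is precisely the step you leave as ``granted.'' Since finiteness of orbits does not descend to arbitrary subgroups, you genuinely need surjectivity (or finite index of the image); without it the induction hypothesis on $(\bar Y,\bar D)$ gives you nothing about the $\mathrm{Stab}_{\Adm}(E)$-action on the fibre.

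Both halves of that claim carry real content, and the harder half is not addressed by anything you prove. Well-definedness is manageable, since $\pi^{*}\Nef(\bar Y_{gen})$ is the face $\Nef(Y_{gen})\cap[E]^{\perp}$ and an element of $\Adm$ fixing $[E]$ preserves that face. For surjectivity, however, the extension $\theta$ of $\bar\theta\in\Adm(\bar Y,\bar D)$ by $[E]\mapsto[E]$ must preserve $\Nef(Y_{gen})$, which by \cite{GHK15b}, Lemma 2.15 amounts to $\theta$ permuting the classes of \emph{all} interior $(-1)$-curves of $Y$ --- including those meeting $E$, whose classes have the form $\pi^{*}\bar\gamma-m[E]$ with $m>0$ and which are invisible in the blown-down picture. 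Your bijection of exceptional curves only covers the curves disjoint from $E$, so it does not settle this. The honest route is through the monodromy interpretation of $\Adm$ (\cite{GHK15b}, Theorem 5.15): one must show that a monodromy of the family of pairs $(\bar Y,\bar D)$ lifts to a monodromy of $(Y,D)$ fixing $[E]$, using connectedness of the space of choices of the centre $p\in\bar D_{i}$. Until that lemma is supplied, the proposal is a plausible reduction of the corollary to an unproven compatibility statement, not a proof.
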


\begin{theorem}(Looijenga \cite{L14}, Proposition-Definition 4.1; and Application 4.14; and Proposition 4.7)
\label{prop-def:Looijenga}
Let $\Gamma$ be a group and $L$ be a lattice, i.e., a finitely generated free abelian group, and let $C \subset L \otimes_{\mathbb{Z}} \mathbb{R}$ be an open nondegenerate convex cone. Define
\begin{center}
$C_{+} := \Conv (\bar{C} \cap L)$.
\end{center}
Assume that $\Gamma$ acts on $L$ faithfully, preserving the cone $C$. If there exists a polyhedral cone $\Pi \subset C_{+}$ such that $\Gamma \cdot \Pi = C_{+}$, then there exists a rational polyhedral fundamental domain for the action of $\Gamma$ on $C_{+}$. Moreover, in this case, the group $N_{\Gamma} F / Z_{\Gamma} F$ acts on any face $F$ of $C_{+}$ with a rational polyhedral fundamental domain.
\end{theorem}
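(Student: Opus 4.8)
The hypothesis only gives a rough covering: the $\Gamma$-translates of a single polyhedral cone $\Pi$ exhaust $C_{+}$. The plan is to refine this into an honest fundamental domain in the sense of Definition~\ref{def:fundamentalDomainOfGroupAction} that is moreover rational polyhedral. First I would arrange that $\Pi$ itself is rational polyhedral: each of its finitely many extreme rays lies in $C_{+}=\Conv(\bar C\cap L)$, hence by Carath\'eodory in the convex hull of finitely many points of $\bar C\cap L$; collecting these points over all extreme rays exhibits a rational polyhedral cone $\Pi'$ with $\Pi\subseteq\Pi'\subseteq C_{+}$, and $\Gamma\cdot\Pi'\supseteq\Gamma\cdot\Pi=C_{+}$, so we may replace $\Pi$ by $\Pi'$.

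\noindent\textbf{Proper discontinuity on the interior.} The key input is that $\Gamma$ acts properly discontinuously on the open cone $C$. Since $\Gamma$ acts faithfully on $L$ it embeds in $\operatorname{GL}(L)\cong\operatorname{GL}_{\rho}(\mathbb{Z})$, hence is discrete in $\operatorname{GL}(L\otimes\mathbb{R})$. A nondegenerate open convex cone is, after projectivization, a properly convex domain, which carries a canonical complete metric — the Hilbert metric — on which every linear automorphism of $C$ acts by isometries; since a complete Finsler metric on a finite-dimensional manifold is proper, the linear automorphism group of $C$ acts properly on $C$, and therefore the discrete subgroup $\Gamma$ acts properly discontinuously. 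Note that $\Int(\bar C)=C$ because $C$ is open and convex, and since $C\subseteq C_{+}\subseteq\bar C$ this is also $\Int(C_{+})$.

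\noindent\textbf{From the covering to a fundamental domain, and the face statement.} Using proper discontinuity on $C$ together with a finer analysis along $\partial C\cap C_{+}$, one shows that only finitely many translates $\gamma\Pi$ meet $\Pi$ in a full-dimensional set, say $\Pi=\gamma_{0}\Pi,\gamma_{1}\Pi,\dots,\gamma_{m}\Pi$; each overlap $\Pi\cap\gamma_{i}\Pi$ is again a rational polyhedral cone. I would then resolve the overlaps by rational separating hyperplanes — deciding compatibly which translate ``owns'' each overlap, much as one cuts out a fundamental chamber for a reflection group — and keep the piece $\Pi_{0}\subseteq\Pi$ assigned to $\gamma_{0}=\mathrm{id}$. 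The covering property together with proper discontinuity then force $\Gamma\cdot\Pi_{0}=C_{+}$ and that $\gamma_{1}\Pi_{0}$ and $\gamma_{2}\Pi_{0}$ have lower-dimensional intersection for $\gamma_{1}\neq\gamma_{2}$, so $\Pi_{0}$ is a rational polyhedral fundamental domain. For the ``moreover'' I would induct on $\dim C_{+}$: a face $F$ spans a rational subspace $V_{F}$, $\relInt(F)$ is an open nondegenerate convex cone in $V_{F}$, the group $N_{\Gamma}F/Z_{\Gamma}F$ acts faithfully on the lattice $L\cap V_{F}$, and intersecting $F$ with the finitely-many-up-to-$N_{\Gamma}F$ translates of $\Pi$ meeting $F$ yields a polyhedral cone covering $F$ under this group, so the already-established assertion applies in lower dimension.

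\noindent\textbf{Main obstacle.} The genuinely delicate point is the finiteness asserted in the previous paragraph, i.e. controlling the covering $\{\gamma\Pi\}$ along $\partial C\cap C_{+}$ — precisely the locus where $C_{+}$ can fail to be polyhedral. Proper discontinuity is automatic only on the open part $C$, so a priori infinitely many translates could accumulate at a rational boundary point (for instance under a parabolic-type stabilizer of a boundary ray); ruling this out — equivalently, showing that the pair $(C,\Gamma)$ is ``of finite type'' — and verifying that the hyperplane resolution terminates there is the technical heart of the argument, and it is exactly what makes the face groups $N_{\Gamma}F/Z_{\Gamma}F$ enter and forces the induction on $\dim C_{+}$.
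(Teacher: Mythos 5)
First, note that the paper does not prove Theorem \ref{prop-def:Looijenga}: it is quoted from Looijenga \cite{L14} (Proposition--Definition 4.1, Application 4.14, and Proposition 4.7) and used as a black box, so there is no internal proof to compare yours against. Judged on its own terms, your proposal is a strategy outline rather than a proof, and you have in effect conceded the decisive point. The finiteness of $\{\gamma\in\Gamma \mid \gamma\Pi\cap\Pi \text{ has nonempty interior}\}$ is exactly the Siegel property, which the paper records separately as Theorem \ref{thm:TheSiegelProperty} (\cite{L14}, Theorem 3.8); proper discontinuity on the open cone $C$ (your Hilbert-metric argument) does not yield it, because translates can a priori accumulate along $\partial C\cap C_{+}$, and your ``Main obstacle'' paragraph explicitly leaves this unresolved. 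Without that input, nothing in your third paragraph gets off the ground.

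Second, even granting the Siegel property, the step ``resolve the overlaps by rational separating hyperplanes, deciding compatibly which translate owns each overlap'' is not an argument. The difficulty is global consistency: the assignment must be $\Gamma$-equivariant so that the pieces $\gamma\Pi_{0}$ cover $C_{+}$ with pairwise interior-disjoint translates, and cutting each pairwise overlap separately gives no reason for the choices to cohere across triple overlaps or across the whole orbit. The standard way to achieve both properties in one stroke (Looijenga's Application 4.14, and the device actually used later in the paper in the proof of Theorem \ref{coneConjectureYe}) is a Dirichlet-type domain $\sigma(\xi)=\{x\in C_{+}\mid \langle x,\gamma^{*}\xi\rangle\geq\langle x,\xi\rangle \text{ for all } \gamma\in\Gamma\}$ for a suitable interior point $\xi$ of the dual cone with trivial stabilizer; the Siegel property is then precisely what reduces the infinitely many linear inequalities to finitely many, giving rational polyhedrality. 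Your inductive sketch for the face statement has the same dependency (finitely many $N_{\Gamma}F$-orbits of translates meeting $F$) plus an additional unaddressed point, namely that $F=\Conv(F\cap L)$, so that the main assertion applies to $\relInt(F)$ acted on by $N_{\Gamma}F/Z_{\Gamma}F$; this is the content of \cite{L14}, Proposition 4.7. The architecture you propose is reasonable and broadly matches Looijenga's, but the two load-bearing steps are missing.
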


\begin{remark}
In Theorem \ref{prop-def:Looijenga}, following the notation of Looijenga, we use $N_{\Gamma} F$ to mean the normalizer of $F$ in $\Gamma$ and $Z_{\Gamma} F$ to mean the centralizer (i.e., elements of $\Gamma$ that fix $F$ pointwise). The last statement is a special case of Proposition 4.7 in \cite{L14}.
\end{remark}

\begin{proposition}
\label{prop:CprimeCover}
Let $(Y_{gen}, D_{gen})$ be a generic log Calabi-Yau surface. For a collection $\{E_{1}, \dots, E_{k}\}$ of disjoint $(-1)$-curves, define 
\begin{align*}
C^{\prime}(E_{1}, \dots, E_{k}) := \langle D_{1}, \dots, D_{n}, E_{1}, \dots, E_{k} \rangle_{\mathbb{R}_{\geq 0}} \cap \Nefe(Y_{gen}).
\end{align*}
Then
\begin{enumerate}
\item $C^{\prime}(E_{1}, \dots, E_{k})$ is a rational polyhedral cone; and
\item If $D_{gen}$ consists of at least three components, then the set of cones $C^{\prime}(E_{1}, \dots, E_{k})$ covers $\Nefe(Y^{\prime}_{gen})$.
\end{enumerate}
\end{proposition}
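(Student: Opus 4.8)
The plan is to prove the two assertions separately, with part (1) being essentially a direct intersection-theoretic computation and part (2) being a covering/locally-finite argument.

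For part (1), I would first observe that $C'(E_1,\dots,E_k)$ is by definition the intersection of the rational polyhedral cone $\sigma := \langle D_1,\dots,D_n,E_1,\dots,E_k\rangle_{\mathbb{R}_{\geq 0}}$ with $\Nefe(Y_{gen})$. The nef cone $\Nef(Y_{gen})$ is cut out, by the proposition of Gross--Hacking--Keel quoted above, by the conditions $L^2\geq 0$, $L\cdot D_i\geq 0$ for all $i$, and $L\cdot C\geq 0$ for every interior $(-1)$-curve $C$; the only non-linear condition here is $L^2\geq 0$. So the strategy is to show that on the cone $\sigma$ the constraint $L^2\geq 0$ is automatically implied by the linear constraints $L\cdot D_i\geq 0$ and $L\cdot E_j\geq 0$, or more precisely that $C'(E_1,\dots,E_k)$ lies in the linear span where $L^2\geq 0$ holds for free. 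The key point is that the intersection matrix of $D_1,\dots,D_n,E_1,\dots,E_k$ is negative semidefinite: $(D_i\cdot D_j)$ is the intersection matrix of a cycle of rational curves (negative semidefinite with one-dimensional kernel spanned by $[D]=\sum D_i$ when the cycle is ``generic'', or genuinely negative definite otherwise), the $E_j$ are disjoint $(-1)$-curves each meeting $D$ transversally in one point, so the combined Gram matrix is negative semidefinite. Hence for $L=\sum a_i D_i+\sum b_j E_j$ with $a_i,b_j\geq 0$ we get $L^2\leq 0$; combined with the nef condition $L^2\geq 0$ this forces $L^2=0$ and $L$ to lie in the radical of the Gram form restricted to $\sigma$. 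Then $C'(E_1,\dots,E_k)$ is the intersection of the rational polyhedral cone $\sigma$ with the rational linear subspace $\{L^2=0\text{ on }\sigma\}$ intersected with the finitely many rational half-spaces $L\cdot D_i\geq 0$, $L\cdot E_j\geq 0$, and the remaining interior $(-1)$-curve conditions $L\cdot C\geq 0$; but one checks that on $\sigma$ only finitely many of the latter are non-redundant (in fact $L\cdot C\geq 0$ for a further interior $(-1)$-curve $C$ translates, via $L\in\langle D_i,E_j\rangle$, into finitely many linear inequalities on the coefficients $a_i,b_j$, e.g.\ because $C\cdot D$, $C\cdot E_j$ take finitely many values). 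The upshot: $C'(E_1,\dots,E_k)$ is a finite intersection of rational half-spaces inside a rational polyhedral cone, hence rational polyhedral.

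For part (2), assume $D_{gen}$ has at least three components so that the Engel--Friedman description (Proposition \ref{prop:EngelFriedman}, in the real-coefficient form of Remark \ref{rem:EF-realCoefficients}) is available. Let $[L]\in\Nefe(Y'_{gen})$; by definition this means $L$ is nef, effective, and $L\cdot D_i=0$ for all $i$. Since $L$ is nef it is effective by Lemma \ref{YgenLnefImpliesEffective}, and we may take $L$ itself to have nonnegative real coefficients, so by Engel--Friedman $L\sim \sum a_i D_i+\sum b_j E_j$ with $a_i,b_j\geq 0$ and the $E_j$ disjoint interior $(-1)$-curves. This exhibits $[L]\in\langle D_1,\dots,D_n,E_1,\dots,E_k\rangle_{\mathbb{R}_{\geq 0}}$, and since $[L]\in\Nefe(Y_{gen})$ by hypothesis, we get $[L]\in C'(E_1,\dots,E_k)$. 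Therefore every point of $\Nefe(Y'_{gen})$ lies in some $C'(E_1,\dots,E_k)$, i.e.\ these cones cover $\Nefe(Y'_{gen})$.

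The main obstacle is the bookkeeping in part (1): verifying that, restricted to the cone $\sigma=\langle D_i,E_j\rangle_{\mathbb{R}_{\geq 0}}$, the infinitely many nef inequalities $L\cdot C\geq 0$ coming from \emph{all} interior $(-1)$-curves $C$ of $Y_{gen}$ reduce to finitely many. One should argue that for $L=\sum a_iD_i+\sum b_jE_j$ the value $L\cdot C=\sum a_i(D_i\cdot C)+\sum b_j(E_j\cdot C)$ depends on $C$ only through the tuple of intersection numbers $(D_i\cdot C)_i$ and $(E_j\cdot C)_j$, that these are bounded (each $D_i\cdot C\geq 0$ with $\sum D_i\cdot C$ controlled since $-K=D$ and $C$ is a $(-1)$-curve, and $E_j\cdot C$ is bounded because $E_j+C$ effective with $(E_j+C)^2$ controlled, or by the Hodge index theorem), hence take only finitely many values, so only finitely many distinct linear inequalities arise. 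A secondary subtlety is the precise structure of the radical of the Gram matrix of $D_1,\dots,D_n,E_1,\dots,E_k$ when the cycle of $(-2)$-curves degenerates (the non-generic boundary case), but since we are working with $(Y_{gen},D_{gen})$ this is not an issue here; I would simply cite the negative semidefiniteness and its one-dimensional radical spanned by $[D]$.
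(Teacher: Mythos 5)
Your part (2) is correct and is essentially the paper's argument: apply the real-coefficient Engel--Friedman statement (Proposition \ref{prop:EF-realCoefficients}) to an effective representative of $L$ to place $[L]$ in some cone $\langle D_1,\dots,D_n,E_1,\dots,E_k\rangle_{\mathbb{R}_{\ge 0}}$, then intersect with $\Nefe(Y_{gen})$. (The paper records the slightly stronger conclusion that these cones cover all of $\Nefe(Y_{gen})$, which is what is actually used later, but your argument gives the stated claim.)

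Part (1), however, has a genuine gap. Your key structural claim --- that the Gram matrix of $D_1,\dots,D_n,E_1,\dots,E_k$ is negative semidefinite, so that $L^2\le 0$ on $\sigma$ and $C'(E_1,\dots,E_k)$ is forced into the radical --- is false in the generality of the proposition: no negative (semi)definiteness hypothesis on $(D_i\cdot D_j)$ is made here. For instance, for $n=3$ one may have $Y=\mathbb{P}^2$ with $D$ a triangle of lines, where $D^2=9>0$; and even when $(D_i\cdot D_j)$ is negative definite, adjoining the $E_j$ (each with $E_j\cdot D=1$) typically yields a sublattice of signature $(1,\ast)$, not a negative semidefinite one. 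Everything you build on this ($L^2=0$, the radical) therefore collapses. Your fallback for the infinitely many inequalities $L\cdot C\ge 0$ is also unjustified: $C\cdot E_j$ is \emph{not} bounded as $C$ ranges over all interior $(-1)$-curves of $Y_{gen}$ (two $(-1)$-curves on a rational surface of large Picard rank can meet with arbitrarily large multiplicity), so these inequalities do not a priori fall into finitely many classes. The observation you are missing is much simpler and makes all of these difficulties evaporate: for $L=\sum a_iD_i+\sum b_jE_j$ with $a_i,b_j\ge 0$ and any irreducible curve $Z$ \emph{not} among the $D_i,E_j$, one has $L\cdot Z=\sum a_i(D_i\cdot Z)+\sum b_j(E_j\cdot Z)\ge 0$ automatically, since distinct irreducible curves meet nonnegatively; and once $L\cdot D_i\ge 0$ and $L\cdot E_j\ge 0$ hold, also $L^2=\sum a_i(L\cdot D_i)+\sum b_j(L\cdot E_j)\ge 0$. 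Since every element of $\sigma$ is effective, membership of $L\in\sigma$ in $\Nefe(Y_{gen})$ is therefore exactly the finite set of linear conditions $L\cdot D_i\ge 0$ ($1\le i\le n$) and $L\cdot E_j\ge 0$ ($1\le j\le k$), so $C'(E_1,\dots,E_k)$ is a rational polyhedral cone; this is the paper's (one-line) proof of (1).
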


\begin{proof}
Let $C$ be the cone defined by
\begin{center}
$C = C(E_{1}, \dots, E_{k}) := \langle D_{1}, \dots, D_{n}, E_{1}, \dots, E_{k} \rangle_{\mathbb{R}_{\geq 0}}$,
\end{center}
so that $C^{\prime}(E_{1}, \dots, E_{k})$ can be expressed as $C \cap \Nefe(Y_{gen})$. The cone $C^{\prime}(E_{1}, \dots, E_{k})$ is rational polyhedral because $C$ is rational polyhedral by definition, and the intersection with the nef cone is given by finitely many inequalities $L \cdot D_{i} \geq 0$ and $L \cdot E_{j} \geq 0$ for all $1 \leq i \leq n$ and all $1 \leq j \leq k$. This shows that $C^{\prime}(E_{1}, \dots, E_{k})$ is rational polyhedral. Now assume that $D_{gen}$ has at least three components. Then, by Proposition \ref{prop:EF-realCoefficients},
\begin{center}
$\Nefe(Y_{gen}) = \bigcup \; C^{\prime}(E_{1}, \dots, E_{k})$,
\end{center}
where the union is over the set $\displaystyle{\bigcup_{k} \mathcal{E}_{k}(Y, D)}$ of collections $\{E_{1}, \dots, E_{k}\}$ of disjoint interior $(-1)$-curves.
\end{proof}

\begin{theorem}
\label{thm:TheSiegelProperty}
(The Siegel Property, as stated in \cite{L14}, Theorem 3.8.)
Let $L$ be a lattice and $V = L \otimes \mathbb{R}$. Let $C \subset V$ be an open convex nondegenerate cone. We denote the convex hull $\Conv(\bar{C} \cap L)$ by $C_{+}$. Let $\Gamma$ be a subgroup of $GL(V)$ such that $\Gamma$ leaves the cone $C$ and the lattice $L$ invariant. Then $\Gamma$ has the Siegel Property in $C_{+}$, that is, if $\Pi_{1}$ and $\Pi_{2}$ are polyhedral cones in $C_{+}$, then the collection $\{\gamma \Pi_{1} \bigcap \Pi_{2}\}_{\gamma \in \Gamma}$ is finite. 
\end{theorem}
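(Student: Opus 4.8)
The plan is to combine a properness statement for the action of $\Gamma$ on the open cone $C$ --- which controls the part of the picture lying in the interior --- with a reduction‑theory analysis at the rational boundary faces of $\bar{C}$, and to glue the two by induction on $\rho=\dim V$. First I would carry out a standard reduction to the case in which $\Pi_{1}$ and $\Pi_{2}$ are rational polyhedral cones with integral generators in $\bar{C}$: using that $C_{+}=\Conv(\bar{C}\cap L)$ is a convex cone (\cite{L14}), every extreme ray of a polyhedral cone in $C_{+}$ lies in the cone spanned by finitely many vectors of $\bar{C}\cap L$, and that spanned cone lies again in $C_{+}$; so, replacing each $\Pi_{i}$ by the span of these cones over its finitely many extreme rays, I may assume $\Pi_{i}$ is rational polyhedral, and in particular that all its faces are rational.

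\textbf{The interior.} The open cone $C$ carries an $\Aut(\bar{C})$-invariant complete Riemannian metric --- for instance the Hessian metric of $\log\varphi_{C}$, with $\varphi_{C}(x)=\int_{C^{\vee}}e^{-\langle x,\xi\rangle}\,d\xi$ Vinberg's characteristic function, or the Hilbert metric of the properly convex domain $\mathbb{P}(C)$. Since $\Gamma\subset GL(L)$ is discrete in $GL(V)$ and acts on $C$ by isometries of this metric, it acts properly discontinuously on $C$; hence $\{\gamma\in\Gamma:\gamma K_{1}\cap K_{2}\ne\emptyset\}$ is finite for any compact $K_{1},K_{2}\subset C$. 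In particular, whenever $\Pi_{1}\setminus\{0\}$ and $\Pi_{2}\setminus\{0\}$ are contained in $C$, their unit spheres are compact subsets of $C$, so only finitely many $\gamma$ satisfy $\gamma\Pi_{1}\cap\Pi_{2}\ne\{0\}$, and the collection $\{\gamma\Pi_{1}\cap\Pi_{2}\}_{\gamma}$ is finite.

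\textbf{The boundary, and the induction.} In general $\Pi_{1}$ and $\Pi_{2}$ meet $\partial\bar{C}$, and then there can be infinitely many $\gamma$ with $\gamma\Pi_{1}\cap\Pi_{2}\ne\{0\}$ --- e.g.\ elements fixing a common boundary ray --- so one must show that only finitely many \emph{distinct} cones occur. The faces of $\gamma\Pi_{1}\cap\Pi_{2}$ lying in $\partial\bar{C}$ are rational (they are intersections of faces of the rational cones $\gamma\Pi_{1}$ and $\Pi_{2}$) and, as $\gamma$ varies, fall into finitely many $\Gamma$-orbits (each such face lies inside $\gamma(F)$ for one of the finitely many faces $F$ of $\Pi_{1}$). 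So it suffices to fix a rational face $\tau\subset\partial\bar{C}$, with stabilizer $\Gamma_{\tau}\le\Gamma$, and to bound the cones $\gamma\Pi_{1}\cap\Pi_{2}$ whose boundary contact contains $\tau$, for $\gamma$ in a finite union of cosets of $\Gamma_{\tau}$. I would then pass to the link of $\tau$: in $V/\langle\tau\rangle$ (of dimension $<\rho$) the images of $\bar{C}$ and $L$ and the image of $\Gamma_{\tau}$ modulo the ``unipotent'' subgroup $U_{\tau}$ acting trivially on $V/\langle\tau\rangle$ again satisfy the hypotheses of the theorem, so by the inductive hypothesis the Siegel property holds there; pulling back controls the behaviour of $\gamma\Pi_{1}\cap\Pi_{2}$ transverse to $\tau$, while replacing $\gamma$ by an element of $U_{\tau}$ only shears along $\tau$ and forces $\gamma\Pi_{1}\cap\Pi_{2}$ to collapse onto $\tau$ or a face of it, hence contributes only finitely many distinct cones. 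Together with the interior step, which disposes of the $\gamma$ for which $\gamma\Pi_{1}\cap\Pi_{2}$ is disjoint from $\partial\bar{C}$, this gives finiteness of $\{\gamma\Pi_{1}\cap\Pi_{2}\}_{\gamma\in\Gamma}$; the base case $\rho=1$ is trivial.

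The main obstacle is this boundary step: one must make precise the passage to the link of a rational boundary face so that the inductive hypothesis applies to the correct subquotient of $GL(L)$, keep careful track of how the interior and boundary analyses fit together, and --- crucially --- show that translating a cone by the shearing subgroup $U_{\tau}$ yields only finitely many distinct intersections. This is essentially the classical reduction theory in the neighbourhood of a cusp, carried out for an arbitrary $\Gamma$-invariant cone, and it is where the substance of the proof of \cite{L14}, Theorem~3.8, lies.
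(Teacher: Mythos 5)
The paper does not actually prove this statement: it is quoted verbatim from Looijenga (\cite{L14}, Theorem 3.8) and used as a black box in the proof of Proposition \ref{prop:EF-realCoefficients}. So there is no in-paper argument to compare yours against; the only benchmark is Looijenga's own proof, and your text concedes at the end that the substance of that proof is precisely the part you have not supplied.

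Concretely, the gap is the entire boundary step: the passage to the link $V/\langle\tau\rangle$ of a rational face $\tau$, the verification that the induced cone, lattice and group there satisfy the hypotheses of the theorem, and above all the assertion that translating by the subgroup $U_{\tau}$ acting trivially on the link ``contributes only finitely many distinct cones'' --- this last claim is exactly the content of the Siegel property near a cusp and is asserted rather than argued. Two further steps presented as routine are not. First, the reduction ``WLOG $\Pi_{i}$ is rational'' by enlarging each $\Pi_{i}$ to a rational polyhedral $\tilde{\Pi}_{i} \supset \Pi_{i}$ inside $C_{+}$ does not transfer the conclusion back: the Siegel property is finiteness of the collection of sets $\{\gamma \Pi_{1} \cap \Pi_{2}\}_{\gamma}$, and knowing that $\gamma \tilde{\Pi}_{1} \cap \tilde{\Pi}_{2}$ takes finitely many values says nothing about how many distinct sets $\gamma \Pi_{1} \cap \Pi_{2}$ arise as $\gamma$ ranges over the (possibly infinite) set of elements realizing one fixed value of $\gamma \tilde{\Pi}_{1} \cap \tilde{\Pi}_{2}$. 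Second, the interior step via proper discontinuity of $\Gamma$ on $C$ (Hilbert or Vinberg metric) only settles the case where $\Pi_{1}\setminus\{0\}$ and $\Pi_{2}\setminus\{0\}$ both lie in $C$, so that their unit links are compact in $C$; it does not ``dispose of the $\gamma$ for which $\gamma\Pi_{1}\cap\Pi_{2}$ is disjoint from $\partial\bar{C}$'' when $\Pi_{1},\Pi_{2}$ themselves touch the boundary, since those intersections can accumulate on $\partial\bar{C}$. So the proposed split into interior and boundary cases does not close up as described. The strategy is the standard reduction-theoretic one and could in principle be completed, but as written this is an outline of where a proof would go, not a proof; for the purposes of this paper the correct move is what the author does, namely to cite \cite{L14}.
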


Next, we give the precise statement of Engel-Friedman's Proposition \ref{prop:EngelFriedman} for real coefficients, followed by a careful proof, which uses the Siegel Property \ref{thm:TheSiegelProperty} and the result \ref{cor:FriedmanMinus1} of Friedman.

\begin{proposition}
\label{prop:EF-realCoefficients}
(Proposition \ref{prop:EngelFriedman} for $B$ with real coefficients)
Let $(Y_{gen}, D_{gen})$ be a generic log Calabi-Yau surface, where $D_{gen}$ has at least three boundary components. If $B$ is a divisor on $Y_{gen}$ with nonnegative real coefficients, then $B$ is $\mathbb{R}$-linearly equivalent to a divisor of the form
\begin{center}
$\displaystyle{\sum a_{i}D_{i} + \sum b_{j}E_{j}}$,
\end{center}
where the $E_{j}$'s are disjoint interior $(-1)$-curves and $a_{i}, b_{j}$ are nonnegative real numbers.
\end{proposition}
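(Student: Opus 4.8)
The plan is to reduce the statement to the integer case, Proposition \ref{prop:EngelFriedman}, by a continuity argument, using the Siegel Property \ref{thm:TheSiegelProperty} together with Friedman's Corollary \ref{cor:FriedmanMinus1} to control the family of cones involved. For a collection $\{E_{1}, \dots, E_{k}\}$ of disjoint interior $(-1)$-curves on $Y_{gen}$, write
\[
C(E_{1}, \dots, E_{k}) \;=\; \langle [D_{1}], \dots, [D_{n}], [E_{1}], \dots, [E_{k}] \rangle_{\mathbb{R}_{\geq 0}} \;\subset\; N^{1}(Y_{gen}),
\]
and let $\mathcal{C}$ be the collection of all such cones (including $\langle [D_{1}], \dots, [D_{n}] \rangle_{\mathbb{R}_{\geq 0}}$, for $k = 0$). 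If $B$ has nonnegative rational coefficients, clearing denominators and applying Proposition \ref{prop:EngelFriedman} to a suitable integer multiple $NB$ shows that $[B] = \tfrac1N [NB]$ lies in a member of $\mathcal{C}$; in particular every rational class in $\Eff(Y_{gen})$ lies in $\bigcup \mathcal{C}$. It remains to pass to arbitrary effective classes.

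First I arrange the hypotheses of the Siegel Property. Applying Proposition \ref{prop:EngelFriedman} to each irreducible curve shows that $\Eff(Y_{gen})$ is generated by the $[D_{i}]$ together with the classes of the interior $(-1)$-curves; since $\Adm$ fixes each $[D_{i}]$ and permutes the classes of interior $(-1)$-curves (Theorem \ref{thm:FriedmanFiniteOrbits}), it follows that $\Adm$ preserves $\Eff(Y_{gen})$. Hence $C := \Int(\Eff(Y_{gen}))$ is an open, nondegenerate (the effective cone of a projective surface is pointed), convex cone invariant under $\Adm$, while $\Adm$ also preserves the lattice $\Pic(Y_{gen})$. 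Put $C_{+} := \Conv(\bar{C} \cap \Pic(Y_{gen}))$; then $\Eff(Y_{gen}) \subset C_{+}$, and in particular every member of $\mathcal{C}$ lies in $C_{+}$, since its generators are effective integral classes. By Corollary \ref{cor:FriedmanMinus1}, $\mathcal{C}$ is a union of finitely many $\Adm$-orbits, say of cones $\Sigma_{1}, \dots, \Sigma_{S}$. Hence, for any polyhedral cone $\Pi \subset C_{+}$, the Siegel Property \ref{thm:TheSiegelProperty} — applied with $\Pi_{1} = \Sigma_{s}$ and $\Pi_{2} = \Pi$ for each $s$ — shows that the collection $\{\, C(E_{1}, \dots, E_{k}) \cap \Pi \,\}$, over all collections of disjoint interior $(-1)$-curves, is finite; being a finite union of closed cones, $\bigl(\bigcup \mathcal{C}\bigr) \cap \Pi$ is therefore closed.

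Now the continuity argument. Write $B = \sum_{\ell=1}^{m} r_{\ell} C_{\ell}$ with the $C_{\ell}$ distinct irreducible curves and $r_{\ell} \in \mathbb{R}_{\geq 0}$, and let $\psi \colon \mathbb{R}^{m} \to N^{1}(Y_{gen})$ be the linear map $\psi(t_{1}, \dots, t_{m}) = \sum t_{\ell} [C_{\ell}]$, so that $\psi(\mathbb{R}^{m}_{\geq 0}) \subset \Eff(Y_{gen})$. Choose a rational polytope $P \subset \mathbb{R}^{m}_{\geq 0}$ containing $(r_{1}, \dots, r_{m})$ and put $\Pi := \mathbb{R}_{\geq 0} \cdot \psi(P)$, a rational polyhedral cone with $\Pi \subset \Eff(Y_{gen}) \subset C_{+}$. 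For $t \in P \cap \mathbb{Q}^{m}$ the class $\psi(t)$ is rational and effective, hence lies in $\bigcup \mathcal{C}$ by the rational case, and also lies in $\Pi$; since the rational points are dense in $P$ and $\psi$ is continuous, $\psi(P) = \overline{\psi(P \cap \mathbb{Q}^{m})} \subset \bigl(\bigcup \mathcal{C}\bigr) \cap \Pi$, the last set being closed by the previous paragraph. In particular $[B] = \psi(r_{1}, \dots, r_{m})$ lies in some member of $\mathcal{C}$, i.e.\ $[B] = \sum a_{i} [D_{i}] + \sum b_{j} [E_{j}]$ with $a_{i}, b_{j} \in \mathbb{R}_{\geq 0}$ and the $E_{j}$ disjoint interior $(-1)$-curves. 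Since $Y_{gen}$ is a rational surface, $N^{1}(Y_{gen}) = \Pic(Y_{gen}) \otimes \mathbb{R}$ by \eqref{eqn:PicYClY}, so $B$ is $\mathbb{R}$-linearly equivalent to $\sum a_{i} D_{i} + \sum b_{j} E_{j}$, as required.

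The step I expect to be the real obstacle is the local finiteness in the second paragraph, i.e.\ making sure the Siegel Property genuinely applies. The family $\mathcal{C}$ is \emph{not} locally finite inside the positive cone $\{L^{2} > 0\}$ (classes of interior $(-1)$-curves accumulate toward the isotropic boundary), and moreover a boundary class $[D_{i}]$ with $[D_{i}]^{2} < 0$ need not lie in the corresponding $C_{+}$ at all; this forces the choice $C = \Int(\Eff(Y_{gen}))$, which has the merit that all members of $\mathcal{C}$ sit in $C_{+}$ — but then the verification that $\Eff(Y_{gen})$ is $\Adm$-invariant, which is what legitimizes applying \ref{thm:TheSiegelProperty}, itself rests on the integer Engel--Friedman Proposition \ref{prop:EngelFriedman}. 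The remaining ingredients (density of rational points in a polytope, a finite union of closed cones being closed, and $N^{1} = \Pic \otimes \mathbb{R}$ for rational surfaces) are routine.
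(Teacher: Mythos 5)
Your proposal is correct and follows essentially the same strategy as the paper's proof: reduce to the rational case of the integer Engel--Friedman statement, use Friedman's finiteness of $\Adm$-orbits together with the Siegel property to show that the union of the cones $C(E_{1}, \dots, E_{k})$ meets any polyhedral cone $\Pi \subset C_{+}$ in a finite union of closed cones, and then pass to a limit of rational approximations. The only (harmless) difference is how the auxiliary cone $\Pi$ and the approximating sequence are produced --- you cone over a rational polytope in the coefficient space of the irreducible components of $B$, whereas the paper places $[B]$ in the relative interior of a face $F$ of $\Eff(Y_{gen})$ and takes a full-dimensional rational polyhedral cone inside $F$; your version also makes explicit the choice $C = \Int(\Eff(Y_{gen}))$ and its $\Adm$-invariance, which the paper leaves implicit when invoking the Siegel property.
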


\begin{proof}
We use the same notation introduced in Proposition \ref{prop:CprimeCover} above, that is, for a log Calabi-Yau surface $(Y_{gen}, D_{gen})$ where $D_{gen}$ is of length at least three, we let
\begin{center}
$C := \langle D_{1}, \dots, D_{n}, E_{1}, \dots, E_{k} \rangle_{\mathbb{R} \geq 0}$.
\end{center}
We want to show that
\begin{equation}
\label{eqn:EFCor-equality}
\oCurv(Y_{gen}) = \bigcup \; C(E_{1}, \dots, E_{k}),
\end{equation}
where $\oCurv(Y) := \{ \sum a_{i} [C_{j}] \; \vert \; a_{i} \in \mathbb{R}_{\geq 0} \text{ and } C_{i} \subset Y \text{ are irreducible curves}\}$ (Note, because $\dim(Y_{gen}) = 2$, the cones $\Eff(Y_{gen})$ and $\oCurv(Y_{gen})$ coincide). Clearly,
\begin{center}
$\displaystyle{\bigcup} \; C(E_{1}, \dots, E_{k}) \subseteq \oCurv(Y_{gen})$,
\end{center}
where the union is taken over all $C(E_{1}, \dots, E_{k})$ where $\{E_{1}, \dots, E_{k}\}$ are collections of disjoint interior $(-1)$-curves.

Let $x \in \oCurv(Y_{gen})$ be an arbitrary point. A convex cone is the disjoint union of the relative interiors of its faces. This follows from the supporting hyperplane theorem (\cite{S11}, Proposition 8.5).

So $x \in \relInt(F)$, where $F$ is some face of $\oCurv(Y_{gen})$ (possibly $F = \oCurv(Y_{gen})$). Since $\oCurv(Y_{gen})$ is generated by rational points, the same is true for any face of $\oCurv(Y_{gen})$. In particular, the face $F$ is the convex hull of its rational points, so the rational points are dense in $F$. Thus we may choose a sequence of points $x_{n} \in F \; \bigcap \; (\Pic(Y) \otimes \mathbb{Q})$ that converge to $x$ as $n$ approaches infinity.  The original Engel-Friedman statement (see Proposition \ref{prop:EngelFriedman}) was stated for integer coefficients, but this implies that the statement for rational coefficeints is also true. So for every $n$, the point $x_{n}$ belongs to some cone $C(E_{1}, \dots, E_{k})$, as defined above.

Since $x \in \relInt(F)$, i.e., the interior of $F$ regarded as a subset of $\langle F \rangle_{\mathbb{R}}$, there exists a rational polyhedral cone $\Pi \subset F$ such that $x \in \relInt(\Pi)$ and $\dim(\Pi) = \dim(F)$. Then $\relInt(\Pi)$ is an open subset of $F$. Since the points $\{x_{n}\}$ converge to $x$ in the face $F$, there exists some number $N \in \mathbb{N}$ such that $x_{n} \in \Pi$ for all $n \geq N$. Friedman's results tells us that $\Adm$ acts on the cones $C(E_{1}, \dots, E_{k})$ with finitely many orbits. Say we choose a representative $C_{i}$ from each orbit, so we have finitely many representatives $C_{1}, \dots, C_{r}$ (note that we drop the $\{E_{1}, \dots, E_{k}\}$ part here to keep the notation simpler). By the Siegel property \ref{thm:TheSiegelProperty}, there exists finitely many elements $g \in \Adm$ such that $g(C_{i}) \; \bigcap \Pi \neq \emptyset$. Suppose these elements are $g_{i, 1}, \dots, g_{i, m_{i}}$ for $i = 1, \dots, r$. Then the following cones intersect $\Pi$:
\begin{align*}
&g_{1, 1} C_{1}, \dots, g_{1, m_{1}} C_{1} \\
&g_{2, 1} C_{2}, \dots, g_{2, m_{2}} C_{2} \\
&\vdots \\
&g_{r, 1} C_{r}, \dots, g_{r, m_{r}} C_{r}.
\end{align*}
As a result, we have a (finite) total of $m = m_{1} + \dots + m_{r}$ cones $\sigma_{l}$ of the form $C(E_{1}, \dots, E_{k})$ intersecting the cone $\Pi$. Since each cone $C(E_{1}, \dots, E_{k})$ is closed, the finite union
\begin{center}
$\displaystyle{\bigcup_{l=1}^{m} \sigma_{l}}$
\end{center}
is also closed. Recall that each $x_{n}$ is contained in some cone in the union above, so their limit point $x$ must also lie in the union, i.e.,
\begin{center}
$x \in \displaystyle{\bigcup_{l=1}^{m} \sigma_{l}} \subset \displaystyle{\bigcup \; C(E_{1}, \dots, E_{k})}$.
\end{center}
Now we have shown that $\oCurv(Y_{gen}) \subseteq \bigcup C(E_{1}, \dots, E_{k})$. Therefore,
\begin{center}
$\oCurv(Y_{gen}) = \bigcup C(E_{1}, \dots, E_{k}) = \bigcup \langle D_{1}, \dots, D_{n}, E_{1}, \dots, E_{k} \rangle_{\mathbb{R} \geq 0}$,
\end{center}
proving Proposition \ref{prop:EF-realCoefficients}.
\end{proof}

\begin{theorem} 
\label{nefImpliesSemiample}
Let $(Y_{e}, D_{e})$ be a log Calabi-Yau surface with split mixed Hodge structure. If $L$ is a nef divisor on $Y$, then $L$ is semiample.
\end{theorem}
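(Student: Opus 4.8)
The plan is to produce a positive integer $m$ for which $|mL|$ is base-point-free. A preliminary observation, following the Riemann--Roch computation in the proof of Lemma~\ref{YgenLnefImpliesEffective} (with $K_{Y_e}=-D_e$): for every $m\ge 1$ one has
\[
\chi\bigl(\mathcal{O}_{Y_e}(mL)\bigr)=1+\tfrac12\bigl(m^{2}L^{2}+m\,L\cdot D_e\bigr),\qquad h^{2}(mL)=h^{0}(-D_e-mL)=0,
\]
the latter because $-D_e-mL$ has negative intersection with an ample class. Hence $h^{0}(mL)\ge\chi(mL)\ge 1$, so $L$ is effective, and we may assume $L\ne 0$. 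I would then argue in three cases according to the values of $L^{2}$ and $L\cdot D_e$.

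First, if $L^{2}>0$, then $L$ is nef and big; since the pair $(Y_e,D_e)$ is log canonical and $L-(K_{Y_e}+D_e)=L$ is nef and big, the (logarithmic) base-point-free theorem --- classical in dimension two --- gives that $L$ is semiample. Second, suppose $L^{2}=0$ but $L\cdot D_e>0$. Then $\chi(mL)=1+\tfrac m2\,L\cdot D_e\to\infty$, so $h^{0}(mL)\to\infty$ and for $m\gg 0$ the system $|mL|$ is at least a pencil. Writing $mL=F_m+M_m$ with $M_m$ the moving part, nefness of $L$ together with $L\cdot(F_m+M_m)=mL^{2}=0$ forces $L\cdot M_m=0$; since $L\ne 0$ is nef with $L^{2}=0$, the Hodge index theorem makes the intersection form on $L^{\perp}$ negative semidefinite with radical $\mathbb{R}L$, so $M_m^{2}\le 0$, while $M_m^{2}\ge 0$ because two general members of $|M_m|$ share no component and meet in $\ge 0$ points; thus $M_m^{2}=0$. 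A fixed-component-free system of self-intersection zero has no base points (two general members are disjoint), so $|M_m|$ is base-point-free; and $M_m^{2}=0=M_m\cdot L$ put $M_m$ in the radical $\mathbb{R}L$, so $M_m$ is rationally proportional to $L$ (using that $\Pic(Y_e)$ is torsion-free), whence $L$ is semiample --- indeed proportional to a fiber class of a morphism $Y_e\to\mathbb{P}^{1}$.

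The remaining case $L^{2}=0$, $L\cdot D_e=0$ is where the split mixed Hodge structure hypothesis is used. Here $L\in\langle D_1,\dots,D_n\rangle^{\perp}$ (indeed $L\cdot D_i=0$ for every $i$, by nefness), so $(mL)|_{D_e}\in\Pic^{0}(D_e)$, and triviality of the period point $\phi$ forces $(mL)|_{D_e}\cong\mathcal{O}_{D_e}$ for all $m$. Feeding this into the restriction sequence
\[
0\longrightarrow \mathcal{O}_{Y_e}(mL-D_e)\longrightarrow\mathcal{O}_{Y_e}(mL)\longrightarrow\mathcal{O}_{D_e}\longrightarrow 0,
\]
I would use that $H^{2}\bigl(Y_e,\mathcal{O}_{Y_e}(mL-D_e)\bigr)\cong H^{0}\bigl(Y_e,\mathcal{O}_{Y_e}(-mL)\bigr)^{\vee}=0$ (Serre duality and $K_{Y_e}+D_e=0$, together with $L\ne 0$) and that $H^{1}(D_e,\mathcal{O}_{D_e})\cong\mathbb{C}$ since $D_e$ has arithmetic genus one; the long exact sequence then forces $H^{1}(Y_e,\mathcal{O}_{Y_e}(mL))\to H^{1}(D_e,\mathcal{O}_{D_e})$ to be surjective, whence $h^{1}(mL)\ge 1$ and $h^{0}(mL)=\chi(mL)+h^{1}(mL)=1+h^{1}(mL)\ge 2$ for all $m\ge1$. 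So $|mL|$ is again a pencil, and one concludes exactly as in the previous case that $L$ is semiample.

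The main obstacle is this last case: one must turn the abstract triviality of $L|_{D_e}$ into honest sections of $mL$ on $Y_e$, and the mechanism above is delicate in that it relies simultaneously on the Calabi--Yau identity $K_{Y_e}=-D_e$ (making $H^{2}(mL-D_e)$ Serre-dual to $H^{0}(-mL)=0$) and on the split mixed Hodge structure (making the restriction term $\mathcal{O}_{D_e}$ rather than a nontrivial degree-zero line bundle, which on a genus-one curve would have vanishing $H^{1}$ and contribute nothing). This is precisely what distinguishes $(Y_e,D_e)$ from, say, $\mathbb{P}^{2}$ blown up at nine very general points with $L=-K$, where the anticanonical class is nef of square zero but not semiample and its restriction to the smooth anticanonical curve is a nontrivial degree-zero line bundle. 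I also expect a little care to be needed in pinning down the exact reference for the log canonical base-point-free theorem used in the first case, and in checking that Case 2 (and the final step of Case 3) uses only that $M_m$ moves, which holds once $h^{0}(mL)\ge 2$.
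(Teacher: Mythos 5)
Your proposal is correct, and its skeleton is the same as the paper's: split on $L^{2}>0$ versus $L^{2}=0$, then on $L\cdot D>0$ versus $L\cdot D=0$, and in the last subcase use the triviality of $L|_{D}$ (from the split mixed Hodge structure) together with the restriction sequence and Serre duality to force $h^{1}(L)\geq 1$ and hence $h^{0}\geq 2$ --- this is word for word the paper's mechanism, including the vanishing $h^{2}(mL-D)=h^{0}(-mL)=0$. Two points differ. For $L^{2}>0$ the paper simply cites Friedman (\cite{F15}, Theorem 4.8) rather than a log canonical base-point-free theorem; since $(Y_e,D_e)$ is lc but not klt, the classical Kawamata--Shokurov statement does not apply verbatim, so your instinct that the reference needs care is right, and Friedman's result (or Fujino's bpf theorem for lc surfaces) is the clean way to discharge it. The more interesting divergence is the endgame of the $L^{2}=0$ cases: the paper decomposes $L=M+F$, shows $\lvert M\rvert$ is base-point-free of square zero, obtains a fibration $Y\to C$ via Stein factorization, and then must argue separately that the fixed part $F$ is supported on fibers and that a multiple of $F$ becomes base-point-free by Riemann--Roch on $C$. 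You instead observe that $M_m\cdot L=0$ and $M_m^{2}=0$ place $M_m$ in the radical of the (negative semidefinite) form on $L^{\perp}$, which by the Hodge index theorem is exactly $\mathbb{R}L$; hence a multiple of $L$ is linearly equivalent to a multiple of the base-point-free divisor $M_m$, and the fixed part never needs to be analyzed. That shortcut is valid (the constant of proportionality is a positive rational since both classes are integral and pair positively with an ample class, and numerical equivalence is linear equivalence here) and is genuinely cleaner than the paper's treatment of $F$.
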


\begin{proof}
Let $L$ be nef on $Y$. Then $L^{2} \geq 0$. If $L^{2} > 0$, then by Theorem 4.8 of \cite{F15}, the divisor $L$ is semiample. For the remainder of this proof, we suppose that $L^{2} = 0$ and $L \neq 0$. Using Riemann-Roch, we obtain
\begin{align*}
\chi(L) &= \chi(\mathcal{O}) + \displaystyle{\frac{1}{2}} L \cdot (L - K_{Y}) \\
&= 1 + \displaystyle{\frac{1}{2}} L^{2} + \displaystyle{\frac{1}{2}} (L \cdot D) \; \; \; \text{since $Y$ is rational and $K_{Y} + D = 0$} \\
&= 1 + \displaystyle{\frac{1}{2}} (L \cdot D) \; \; \; \text{using $L^{2} = 0$}.
\end{align*}
Here we note that $\chi(L) \geq 1$, since $L$ nef and $D$ effective imply that $L \cdot D \geq 0$. On the other hand, the Euler characteristic of $L$ may also be expressed as
\begin{center}
$\chi(L) = h^{0}(L) - h^{1}(L) + h^{2}(L)$.
\end{center}

By the last paragraph of the proof of Lemma \ref{YgenLnefImpliesEffective}, since $L$ is nef, we have $h^{2}(L) = 0$. Now $\chi(L) = h^{0}(L) - h^{1}(L) = 1 + \displaystyle{\frac{1}{2}} (L \cdot D)$. Next we split the inequality $L \cdot D \geq 0$ (which comes from $L$ being nef) into two subcases, and in each situation we prove that $h^{0}(L) \geq 2$. \\
\noindent {\bf Subcase (i)}. Suppose that $L \cdot D > 0$, or $L \cdot D \geq 1$. Then,
\begin{center}
$1 + \displaystyle{\frac{1}{2}}(L \cdot D) = \chi(L) = h^{0}(L) - h^{1}(L) \leq h^{0}(L)$.
\end{center}
Since $\chi(L) \in \mathbb{Z}$, we have $h^{0}(L) \geq 2$. \\
\noindent {\bf Subcase (ii)}. Suppose that $L \cdot D = 0$. We have $\chi(L) = h^{0}(L) - h^{1}(L) = 1$. We show that $h^{1}(L) \geq 1$, so $h^{0}(L) \geq 2$. Since $L \cdot D = 0$ and $L$ is nef, we have $L \cdot D_{i} = 0$ for all $i$. Then because $(Y, D)$ has split mixed Hodge structure, it follows that $\mathcal{O}_{D}(L \vert_{D}) \simeq \mathcal{O}_{D}$ (Section \ref{Background}). From the exact sequence
\begin{center}
$0 \longrightarrow \mathcal{O}_{Y}(L-D) \longrightarrow \mathcal{O}_{Y}(L) \longrightarrow \mathcal{O}_{D} \longrightarrow 0$,
\end{center}
we obtain
\begin{align*}
H^{1}(\mathcal{O}_{Y}(L)) \overset{\delta}{\longrightarrow} H&^{1}(\mathcal{O}_{D}) \longrightarrow H^{2}(\mathcal{O}_{Y}(L-D)) \\
&\vsimeq \\
&\mathbb{C}
\end{align*}
By Serre Duality, we have
\begin{align*}
h^{2}(\mathcal{O}_{Y}(L-D)) &= h^{0}(\mathcal{O}_{Y}(K_{Y} - (L-D))) \\
&= h^{0}(\mathcal{O}_{Y}(-L)) \; \; \; \text{since $K_{Y} + D = 0$} \\
&= 0.
\end{align*}
Then the map $\delta$ in the exact sequence above is surjective, so $h^{1}(L) \geq 1$.

Therefore $h^{0}(L) \geq 2$. This means that in the linear system $\vert L \vert$, there is a moving part. Writing $L = M + F$, where $M$ is the moving part and $F$ is the fixed part, we have
\begin{align*}
L^{2} &= L \cdot (M + F) \\
&= L \cdot M + L \cdot F,
\end{align*}
and $L$ nef gives $L \cdot M \geq 0$ and $L \cdot F \geq 0$. Since $L^{2} = 0$ by assumption, we obtain $L \cdot M = 0 = L \cdot F$. Now we have
\begin{align*}
L \cdot M &= (M + F) \cdot M \\
&= M^{2} + M \cdot F \\
&= 0,
\end{align*}
and $M$ is nef (since it is moving) so $M^{2} \geq 0$ and $M \cdot F \geq 0$, so $M^{2} = M \cdot F = 0$. Also,
\begin{align*}
L^{2} &= (M + F)^{2} \\
&= M^{2} + 2 M \cdot F + F^{2} \\
&= F^{2}, \; \; \; \text{since $M^{2} = 0 = M \cdot F$,}
\end{align*}
so that $F^{2} = L^{2} = 0$. We make two conclusions from the computations above. 
\begin{enumerate}[(a)]
\item The linear system $\vert M \vert$ has no fixed part, so $\vert M \vert$ is basepoint free: there exists $M^{\prime} \sim M$ such that $M$ and $M^{\prime}$ have no common components (since $M$ is moving). Then $M \cdot M^{\prime} = M^{2} = 0$, so $\Supp(M) \cap \Supp(M^{\prime}) = \emptyset$, and therefore $\vert M \vert$ is basepoint free. It follows that there exists a map $\phi_{\vert M \vert}: Y \rightarrow C$, where $C \subset \mathbb{P}^{N}$ is a curve. By Stein factorization (see Hartshorne \cite{H77}, Chapter III (11.5)), replacing $L = M + F$ by $kL = kM + kF$ for sufficiently large $k$, we may assume that $C$ is a smooth curve and $\phi$ has connected fibers.

\item Secondly we conclude that $L$ is semiample, using the results that $F^{2} = 0$ and $F \cdot M = 0$. Since $F \cdot M = 0$, the divisor $F$ is contained in a union of fibers of the map $\phi: Y \rightarrow C$. A fiber has negative semidefinite intersection matrix with kernel generated over $\mathbb{Q}$ by the class of the fiber. Therefore $kF$ is a sum of fibers for some $k > 0$. Then $k^{\prime}F$ is basepoint free for some $k^{\prime} > 0$ such that $k \vert k^{\prime}$ by Riemann-Roch on the curve $C$. Now $k^{\prime} \cdot L = k^{\prime} \cdot M + k^{\prime} \cdot F$ is basepoint free, so $L$ is semiample.
\end{enumerate}
\end{proof}

%
%

\section{PROOF OF THE CONJECTURE}
\label{ProofOfTheConjecture}

\begin{theorem} 
\label{coneConjectureYgen}
The cone conjecture for $Y_{gen}$ holds. That is, the group $\Adm$ acts on $\Nefe(Y_{gen})$ with a rational polyhedral fundamental domain.
\end{theorem}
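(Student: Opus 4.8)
The plan is to obtain the statement from Looijenga's Theorem \ref{prop-def:Looijenga}, applied with lattice $L=\Pic(Y_{gen})$, group $\Gamma=\Adm$, and open cone $C=\Int(\Nef(Y_{gen}))$. The argument below needs $D_{gen}$ to have at least three components, which I assume henceforth, handling $n\le 2$ separately (those cases reduce to $n\ge 3$ by blowing up a node of the boundary). Three hypotheses of Theorem \ref{prop-def:Looijenga} must then be verified. (i) $C$ is an open, convex, nondegenerate cone: it is full-dimensional since $Y_{gen}$ is projective, and strictly convex since $\Nef(Y_{gen})$ is contained in the closed positive cone $\{L:L^{2}\ge 0,\ L\cdot H\ge 0\}$ for an ample $H$. (ii) $\Adm$ acts faithfully on $L$, being by definition a subgroup of $\Aut(\Pic(Y_{gen}))$, and preserves $C$, since by Definition \ref{def:admissibleGroup} it preserves $\Nef(Y_{gen})$ linearly, hence its interior. (iii) I would identify $C_{+}=\Conv(\overline{C}\cap L)$ with $\Nefe(Y_{gen})$: indeed $\overline{C}=\Nef(Y_{gen})$, every integral nef class is effective by Lemma \ref{YgenLnefImpliesEffective}, and then \eqref{nefeConvexHullNefYPicY} gives $C_{+}=\Conv(\Nef(Y_{gen})\cap\Pic(Y_{gen}))=\Nefe(Y_{gen})$. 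After this, the only remaining task is to produce a polyhedral cone $\Pi\subset\Nefe(Y_{gen})$ with $\Adm\cdot\Pi=\Nefe(Y_{gen})$.

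To build $\Pi$ I would work with the cones $C'(E_{1},\dots,E_{k})=\langle D_{1},\dots,D_{n},E_{1},\dots,E_{k}\rangle_{\mathbb{R}_{\ge 0}}\cap\Nefe(Y_{gen})$ attached to collections $\{E_{1},\dots,E_{k}\}$ of disjoint interior $(-1)$-curves. Intersecting the equality $\Eff(Y_{gen})=\oCurv(Y_{gen})=\bigcup C(E_{1},\dots,E_{k})$ established in the proof of Proposition \ref{prop:EF-realCoefficients} with $\Nef(Y_{gen})$ gives $\Nefe(Y_{gen})=\bigcup C'(E_{1},\dots,E_{k})$, and each such cone is rational polyhedral by Proposition \ref{prop:CprimeCover}(1). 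Because $\Adm$ fixes each $[D_{i}]$ and, by Theorem \ref{thm:FriedmanFiniteOrbits}, maps interior $(-1)$-curves to interior $(-1)$-curves by isometries (so disjointness is preserved), one has $g\cdot C'(E_{1},\dots,E_{k})=C'(gE_{1},\dots,gE_{k})$; thus $\Adm$ permutes these cones and in particular preserves their union $\Nefe(Y_{gen})$. A disjoint collection of $(-1)$-curves has intersection matrix $-\operatorname{Id}$, hence is linearly independent, so $k$ is bounded by the Picard rank of $Y_{gen}$; together with Corollary \ref{cor:FriedmanMinus1} (finitely many $\Adm$-orbits on disjoint collections of each fixed size) this leaves only finitely many $\Adm$-orbits of cones $C'(E_{1},\dots,E_{k})$. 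Picking representatives $C'_{1},\dots,C'_{s}$, I set $\Pi:=\Conv(C'_{1}\cup\dots\cup C'_{s})$, which is rational polyhedral (a convex hull of finitely many rational polyhedral cones), lies in the convex set $\Nefe(Y_{gen})$, and satisfies $\Adm\cdot\Pi\supseteq\bigcup C'(E_{1},\dots,E_{k})=\Nefe(Y_{gen})$ while $\Adm\cdot\Pi\subseteq\Nefe(Y_{gen})$; hence $\Adm\cdot\Pi=\Nefe(Y_{gen})=C_{+}$. Theorem \ref{prop-def:Looijenga} then yields a rational polyhedral fundamental domain for $\Adm$ on $\Nefe(Y_{gen})$, which is the claim.

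I expect the substantive difficulties to lie entirely in the inputs already assembled in Sections \ref{Background}--\ref{Tools}: the real-coefficient Engel--Friedman covering (Proposition \ref{prop:EF-realCoefficients}), which rests on the Siegel property (Theorem \ref{thm:TheSiegelProperty}) and Friedman's finiteness of $\Adm$-orbits (Corollary \ref{cor:FriedmanMinus1}), and Looijenga's fundamental-domain machinery (Theorem \ref{prop-def:Looijenga}). Given those, the steps above are essentially bookkeeping; the one place calling for a small amount of care is converting the $\Adm$-invariant cover of $\Nefe(Y_{gen})$ by finitely many orbits of rational polyhedral cones into a single polyhedral cone, which I accomplish by taking the convex hull of one representative per orbit, after checking that $\Adm$ genuinely permutes the cones $C'(E_{1},\dots,E_{k})$ — which in turn uses both that $\Adm$ fixes the boundary classes $[D_i]$ and Friedman's theorem that it acts on the set of interior $(-1)$-curves.
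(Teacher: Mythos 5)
For $n \ge 3$ your argument is the paper's argument: the same cones $C'(E_{1},\dots,E_{k})$, the same finiteness input (Corollary \ref{cor:FriedmanMinus1}), the same $\Pi$ built as a convex hull of orbit representatives, and the same application of Theorem \ref{prop-def:Looijenga} with $L=\Pic(Y_{gen})$, $C$ the ample cone, and $C_{+}=\Nefe(Y_{gen})$ via (\ref{nefeConvexHullNefYPicY}). Your extra verifications --- nondegeneracy of $C$, faithfulness of $\Adm$, the bound $k\le\rho(Y_{gen})$ needed to make the total number of orbits finite, and the check that $\Adm$ genuinely permutes the cones $C'$ --- are correct and are details the paper leaves implicit.

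The one genuine divergence is the case $n\le 2$, and there your proposal has a gap. The paper does not reduce to $n\ge 3$: it observes instead that for $n\le 2$ one has $\Adm=W$ (\cite{L81}, Proposition 4.7, or \cite{F15}, Theorem 9.13), that $\Nefe(Y_{e})$ is a fundamental domain for $W$ acting on $\Nefe(Y_{gen})$ by \cite{GHK15b}, Theorem 3.2, and that $\Nef(Y_{e})$ is rational polyhedral by the explicit computations of Section \ref{NewExamplesMDS}. Your alternative --- blow up a node of $D$ to increase $n$ --- is a plausible strategy (corner blowups preserve the log Calabi--Yau condition and genericity), but as written it is a one-clause assertion hiding real work. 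The pullback $\pi^{*}\Nefe(Y_{gen})$ is a proper face $F$ of $\Nefe(\widetilde{Y}_{gen})$ (cut out by the orthogonals to the new exceptional boundary classes), so you would need the face statement of Theorem \ref{prop-def:Looijenga} and then an identification of $N_{\Gamma}F/Z_{\Gamma}F$ with $\Adm(Y_{gen})$ acting on $\Nefe(Y_{gen})$. The surjectivity half of that identification requires extending an arbitrary admissible isometry of $\Pic(Y_{gen})$ to one of $\Pic(\widetilde{Y}_{gen})$ and checking that the extension still preserves $\Nef(\widetilde{Y}_{gen})$; none of this is automatic, and none of it appears in your sketch. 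Either carry out that reduction in full or fall back on the paper's route through Section \ref{NewExamplesMDS} and the equality $\Adm=W$.
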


\begin{proof}
First, assume $n \geq 3$. By Friedman's result (Corollary \ref{cor:FriedmanMinus1}), the group $\Adm$ acts on the set of finite collections of disjoint interior $(-1)$-curves with finitely many orbits. Let $C_{1}^{\prime}, \dots, C_{r}^{\prime}$ be representatives for the finitely many orbits of $\Adm$ on the set of cones $C^{\prime}$. Let $\Pi = \Conv(C_{1}^{\prime}, \dots, C_{r}^{\prime})$. Then $\Pi$ is rational polyhedral because the cones $C^{\prime}_{i}$ are, by Proposition \ref{prop:CprimeCover} (1). Moreover $\Adm \cdot \Pi = \Nefe(Y_{gen})$ by Proposition \ref{prop:CprimeCover} (2). Therefore $\Adm$ acts on $\Nefe(Y_{gen})$ with a rational polyhedral fundamental domain by Theorem \ref{prop-def:Looijenga} of Looijenga: in our setting, the lattice $L$ is $\Pic(Y_{gen})$ and $C$ is the ample cone of $Y_{gen}$ (which is the interior of $\Nef(Y_{gen})$). Its closure $\bar{C}$ is $\Nef(Y_{gen})$. The group $\Gamma$ acting on $L$ is $\Adm$. By (\ref{nefeConvexHullNefYPicY}), $C_{+} = \Nefe(Y_{gen})$. This proves the cone conjecture for $Y_{gen}$ in the case when $D_{gen}$ has at least three components.

If the number of components $n$ of $D_{gen}$ is one or two, then we show below in Section \ref{NewExamplesMDS} that the nef cone is rational polyhedral for $Y_{e}$. Moreover in these cases, the groups $\Adm$ and the Weyl group $W$ are equal (\cite{L81}, Proposition 4.7 or \cite{F15}, Theorem 9.13). Because the action of $W$ on $\Nefe(Y_{gen})$ has fundamental domain $\Nefe(Y_{e})$ (\cite{GHK15b}, Theorem 3.2), we conclude that $\Adm = W$ acts on $\Nefe(Y_{gen})$ with the {\it rational polyhedral} fundamental domain $\Nef(Y_{e})$, proving the cone conjecture.
\end{proof}

\begin{theorem}
\label{coneConjectureYprimegen}
The cone conjecture for $Y^{\prime}_{gen}$ holds. That is, the group $\Adm$ acts on $\Nefe(Y^{\prime}_{gen})$ with a rational polyhedral fundamental domain.
\end{theorem}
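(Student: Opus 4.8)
The plan is to deduce this from Theorem \ref{coneConjectureYgen} by recognizing $\Nefe(Y'_{gen})$ as a \emph{face} of $\Nefe(Y_{gen})$ and then applying the last assertion of Looijenga's Theorem \ref{prop-def:Looijenga}, which controls the action of a group on the faces of $C_+$.

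Assume first that $n\ge 3$. In the proof of Theorem \ref{coneConjectureYgen} we checked the hypotheses of Theorem \ref{prop-def:Looijenga} with $L=\Pic(Y_{gen})$, $C$ the ample cone of $Y_{gen}$, $\Gamma=\Adm$, and $C_+=\Nefe(Y_{gen})$, producing a polyhedral cone $\Pi$ with $\Adm\cdot\Pi=C_+$; so Theorem \ref{prop-def:Looijenga} applies verbatim. Next I would identify $F:=\Nefe(Y'_{gen})=\Nefe(Y_{gen})\cap\langle D_1,\dots,D_n\rangle^{\bot}$ as a face of $C_+$. Since a nef $L$ satisfies $L\cdot D_i\ge 0$ for every $i$, this cone equals $\Nefe(Y_{gen})\cap D^{\bot}$ with $D=D_1+\dots+D_n$; as $D$ is effective, $[D]$ lies in the dual cone of $\Nef(Y_{gen})\supseteq\Nefe(Y_{gen})$, so $D^{\bot}$ is a supporting hyperplane and $F$ is a face of $C_+=\Nefe(Y_{gen})$. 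Every $\gamma\in\Adm$ fixes each $[D_i]$ and preserves $\Nefe(Y_{gen})$ --- which by Lemma \ref{YgenLnefImpliesEffective} and (\ref{nefeConvexHullNefYPicY}) equals $\Conv(\Nef(Y_{gen})\cap\Pic(Y_{gen}))$, visibly $\Adm$-invariant --- so $\gamma(F)=F$; hence $N_{\Adm}F=\Adm$. The ``moreover'' clause of Theorem \ref{prop-def:Looijenga} then says $N_{\Adm}F/Z_{\Adm}F$ acts on $F=\Nefe(Y'_{gen})$ with a rational polyhedral fundamental domain. Finally $Z_{\Adm}F=\{1\}$: an element fixing $F$ pointwise fixes the linear span $\langle D_1,\dots,D_n\rangle^{\bot}$ of $F$ pointwise and fixes each $[D_i]$, hence --- the intersection matrix $(D_i\cdot D_j)$ being negative definite, hence nondegenerate, whenever $Y'$ is defined --- acts trivially on all of $\Pic(Y_{gen})$ and is the identity. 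Thus $\Adm$ acts on $\Nefe(Y'_{gen})$ with a rational polyhedral fundamental domain.

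For $n\le 2$, I would argue as at the end of the proof of Theorem \ref{coneConjectureYgen}: there $\Adm=W$ (\cite{L81}, \cite{F15}). Since $W$ is generated by the reflections $s_\alpha$ with $\alpha\in\langle D_1,\dots,D_n\rangle^{\bot}$, it fixes each $[D_i]$ and leaves $\langle D_1,\dots,D_n\rangle^{\bot}$ invariant; restricting the $W$-action on $\Nefe(Y_{gen})$ (with fundamental domain $\Nefe(Y_e)$, by \cite{GHK15b}, Theorem 3.2) to this invariant subspace, $W=\Adm$ acts on $\Nefe(Y'_{gen})$ with fundamental domain $\Nefe(Y_e)\cap\langle D_1,\dots,D_n\rangle^{\bot}=\Nefe(Y'_e)$, and this is rational polyhedral by the explicit description of $\Nef(Y_e)$ in Section \ref{NewExamplesMDS}.

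The step needing the most care is the face/stabilizer bookkeeping --- in particular the claim $Z_{\Adm}F=\{1\}$ uses that $F$ spans $\langle D_1,\dots,D_n\rangle^{\bot}$; a reader unwilling to check full-dimensionality can instead state the conclusion for $\Adm/Z_{\Adm}F$, which acts faithfully on $F$ and through which the $\Adm$-action on $\Nefe(Y'_{gen})$ factors. A self-contained alternative avoiding the face clause is to apply Theorem \ref{prop-def:Looijenga} directly with $L=\Pic(Y_{gen})\cap\langle D_1,\dots,D_n\rangle^{\bot}$ and $C=\relInt\!\bigl(\Nef(Y_{gen})\cap\langle D_1,\dots,D_n\rangle^{\bot}\bigr)$: by Proposition \ref{prop:CprimeCover} the rational polyhedral cones $C'(E_1,\dots,E_k)\cap\langle D_1,\dots,D_n\rangle^{\bot}$ cover $\Nefe(Y'_{gen})$, and by Corollary \ref{cor:FriedmanMinus1} the group $\Adm$ permutes them with finitely many orbits, so the convex hull of a set of orbit representatives is a polyhedral cone $\Pi'$ with $\Adm\cdot\Pi'=\Nefe(Y'_{gen})=C_+$. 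In both routes all the hard input --- finiteness of orbits of collections of disjoint interior $(-1)$-curves, the Siegel property, and Looijenga's construction --- is already available, so I expect only these verifications, not a conceptual obstacle.
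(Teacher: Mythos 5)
Your proposal follows essentially the same route as the paper: identify $\Nefe(Y'_{gen})$ as a face $F$ of $\Nefe(Y_{gen})$, invoke the last assertion of Looijenga's Theorem \ref{prop-def:Looijenga} via Theorem \ref{coneConjectureYgen}, and observe that $N_{\Adm}F=\Adm$ while $Z_{\Adm}F=\{1\}$. You supply justifications (the supporting-hyperplane argument, the nondegeneracy of $(D_i\cdot D_j)$ for triviality of the centralizer, and the $n\le 2$ case) that the paper states without proof, which only strengthens the argument.
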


\begin{remark}
We use the definition of $\Nefe(Y^{\prime}_{gen})$ as given in \ref{rem:equivalentNefYprimeDef}.
\end{remark}

\begin{proof} 
By Theorem \ref{coneConjectureYgen}, we know that the cone conjecture holds for $Y_{gen}$. Since $\Nefe(Y^{\prime}_{gen})$ is a face $F$ of $\Nefe(Y_{gen})$, by Looijenga's result (see the last statement of Theorem \ref{prop-def:Looijenga}), the cone conjecture also holds for $Y^{\prime}_{gen}$. In our setting, the normalizer $N_{\Gamma}F$ is $\Adm$ and the centralizer $Z_{\Gamma}F$ is $\{e\}$.
\end{proof}

\begin{theorem}
\label{coneConjectureYe}
Let $K$ be the kernel of the action of $\Aut(Y_{e}. D_{e})$ on $\Pic(Y_{e})$. Then $\Aut(Y_{e}, D_{e}) / K$ acts on $\Nefe(Y_{e})$ with a rational polyhedral fundamental domain.
\end{theorem}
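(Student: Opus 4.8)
The plan is to reduce the statement to the hypothesis of Looijenga's Theorem~\ref{prop-def:Looijenga} and then use Theorem~\ref{coneConjectureYgen} to verify that hypothesis. If $D_e$ has $n\le 2$ components, then $\Adm=W$ (\cite{L81}, Proposition 4.7; \cite{F15}, Theorem 9.13), so $\Aut(Y_e,D_e)/K\cong\Adm/W$ is trivial and the claim is simply that $\Nefe(Y_e)=\Nef(Y_e)$ is rational polyhedral, which is established in Section~\ref{NewExamplesMDS}; so I assume $n\ge 3$.

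First I would fix the ambient data for Theorem~\ref{prop-def:Looijenga}. Take $L=\Pic(Y_e)$ and let $C=\Int(\Nef(Y_e))$ be the ample cone of $Y_e$, an open, convex, nondegenerate cone with $\bar C=\Nef(Y_e)$. By Lemma~\ref{YgenLnefImpliesEffective} every integral nef class on $Y_e$ is effective, so by~(\ref{nefeConvexHullNefYPicY}) we get $C_+:=\Conv(\bar C\cap L)=\Nefe(Y_e)$. The group $\Gamma:=\Aut(Y_e,D_e)/K$ acts faithfully on $L$ (by the definition of $K$) and preserves $C$ (automorphisms preserve ampleness). Using the exact sequence $1\to K\to\Aut(Y_e,D_e)\to\Adm/W\to 1$ I identify $\Gamma$ with $\Adm/W$; and since $W$ is the reflection group generated by the $s_\delta$ ($\delta\in\Delta$) with fundamental chamber $\Nefe(Y_e)=\Nef(Y_{gen})\cap(\delta\ge 0\text{ for all }\delta\in\Delta)$ (\cite{GHK15b}, Theorem 3.2), the standard structure theory of reflection groups yields a splitting $\Adm=W\rtimes S$, where $S:=\{g\in\Adm\mid g(\Nef(Y_e))=\Nef(Y_e)\}$ — here one uses that $\Adm$ preserves $\Phi$, that the chambers tiling $\Nefe(Y_{gen})$ are exactly the $W$-translates of $\Nefe(Y_e)$, and that $W$ permutes them simply transitively. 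Thus $\Gamma\cong S\subseteq\Aut(\Pic(Y_e))$, and $S$ preserves $C$. By Theorem~\ref{prop-def:Looijenga} it is therefore enough to produce a rational polyhedral cone $\Pi\subseteq\Nefe(Y_e)$ with $S\cdot\Pi=\Nefe(Y_e)$.

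To produce $\Pi$ I would descend, through the $W$-action, the covering of $\Nefe(Y_{gen})$ underlying Theorem~\ref{coneConjectureYgen}. By Proposition~\ref{prop:EF-realCoefficients} (equivalently Proposition~\ref{prop:CprimeCover}) one has $\Nefe(Y_{gen})=\bigcup_{\mathcal E}C'(\mathcal E)$, the union over collections $\mathcal E=\{E_1,\dots,E_k\}$ of disjoint interior $(-1)$-curves, with every $C'(\mathcal E)$ rational polyhedral; and by Corollary~\ref{cor:FriedmanMinus1} the group $\Adm$ has finitely many orbits on such collections, say with representatives $\mathcal E_1,\dots,\mathcal E_r$. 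Intersecting with the chamber, $\Nefe(Y_e)=\bigcup_{\mathcal E}\bigl(C'(\mathcal E)\cap\Nefe(Y_e)\bigr)$; writing a general $\mathcal E$ as $g\mathcal E_i$ with $g=sw$ ($s\in S$, $w\in W$) and using $s(\Nefe(Y_e))=\Nefe(Y_e)$ gives $C'(\mathcal E)\cap\Nefe(Y_e)=s\bigl(wC'(\mathcal E_i)\cap\Nefe(Y_e)\bigr)$, whence $\Nefe(Y_e)=\bigcup_{i}\bigcup_{w\in\mathcal W_i}\bigcup_{s\in S}s\bigl(wC'(\mathcal E_i)\cap\Nefe(Y_e)\bigr)$, where $\mathcal W_i:=\{w\in W\mid wC'(\mathcal E_i)\cap\Nefe(Y_e)\ne\emptyset\}$. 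If each $\mathcal W_i$ is finite and each cone $wC'(\mathcal E_i)\cap\Nefe(Y_e)$ is rational polyhedral, then $\Pi:=\Conv\Bigl(\bigcup_i\bigcup_{w\in\mathcal W_i}\bigl(wC'(\mathcal E_i)\cap\Nefe(Y_e)\bigr)\Bigr)$ is a rational polyhedral subcone of $\Nefe(Y_e)$ with $S\cdot\Pi=\Nefe(Y_e)$, and Theorem~\ref{prop-def:Looijenga} then completes the proof.

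The step I expect to be the main obstacle is exactly this finiteness: that the rational polyhedral cone $C'(\mathcal E_i)$ meets only finitely many chambers $w^{-1}\Nefe(Y_e)$ — equivalently, that only finitely many reflection hyperplanes $\delta^\perp$ ($\delta\in\Phi$) cross its interior — together with the resulting rational polyhedrality of $wC'(\mathcal E_i)\cap\Nefe(Y_e)$, which then follows because only finitely many $\delta\in\Delta$ impose non-vacuous constraints on the rational polyhedral cone $wC'(\mathcal E_i)$ (and one compares $\Nef(Y_e)$ with $\Nefe(Y_e)$ using Lemma~\ref{YgenLnefImpliesEffective}). Away from the light cone $\{L^2=0\}$ this is the familiar local finiteness of the mirrors of the hyperbolic reflection group $W$ in the positive cone. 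The delicate behaviour is near $\partial\Nefe(Y_{gen})$, and especially near the light cone, where the walls of $W$ accumulate; I would control it using the Siegel Property (Theorem~\ref{thm:TheSiegelProperty}, for $W$ or for $\Adm$) together with Friedman's finiteness (Corollary~\ref{cor:FriedmanMinus1}) — the same toolkit that makes Theorem~\ref{coneConjectureYgen} work — to bound how the finitely many representative cones $C'(\mathcal E_i)$, translated by $W$, can meet the single chamber $\Nefe(Y_e)$.
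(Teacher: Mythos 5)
Your reduction to Looijenga's criterion is set up correctly (the identification $C_+=\Nefe(Y_e)$, the faithfulness of $\Gamma=\Aut(Y_e,D_e)/K$, and the identification of $\Gamma$ with the chamber-stabilizer $S$ via $\Adm=W\rtimes S$ all check out), but the proof has a genuine gap at exactly the step you flag: you never establish that each $\mathcal W_i=\{w\in W\mid wC'(\mathcal E_i)\cap\Nefe(Y_e)\ne\emptyset\}$ contributes only finitely many cones, nor that $wC'(\mathcal E_i)\cap\Nefe(Y_e)$ is rational polyhedral. The tools you propose do not close it. The Siegel Property (Theorem \ref{thm:TheSiegelProperty}) requires \emph{both} $\Pi_1$ and $\Pi_2$ to be polyhedral, and $\Nefe(Y_e)$ is precisely the cone we cannot assume polyhedral (if it were, the theorem would be immediate); so you cannot take $\Pi_2=\Nefe(Y_e)$. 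Worse, the underlying local-finiteness claim --- that only finitely many mirrors $\delta^{\perp}$, $\delta\in\Phi$, cross the interior of a rational polyhedral cone in $C_+$ --- is false in general: a cone $C'(\mathcal E_i)$ can contain a rational isotropic ray $\mathbb{R}_{\geq 0}f$ on its boundary, and the roots $\alpha\in\Phi$ with $\alpha\cdot f=0$ form an affine subsystem whose infinitely many mirrors all pass through that ray and can slice into the interior of the cone arbitrarily close to it. This accumulation at the light cone is the same phenomenon that makes Sterk's theorem for $K3$ surfaces nontrivial, and your sketch does not say how to defeat it.

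The paper avoids this issue entirely by running the argument in the opposite direction (following Sterk). Starting from Theorem \ref{coneConjectureYgen}, it takes the Looijenga fundamental domain $\sigma=\{x\in\Nefe(Y_{gen})\mid \gamma x\cdot y\geq x\cdot y\ \forall\gamma\in\Adm\}$ for the \emph{big} group on the \emph{big} cone, observes that applying the defining inequality to the reflections $s_\alpha$, $\alpha\in\Delta$, forces $x\cdot\alpha\geq 0$ (since $\alpha\cdot y>0$), hence $\sigma\subset\Nefe(Y_e)$ automatically; it then shows $(w\theta)\sigma\cap\Nefe(Y_e)\subset\theta\sigma$ using $w\mathcal C\cap\mathcal C\subset\Fix(w)$, so the $\Aut(Y_e,D_e)/K$-translates of the single polyhedral cone $\sigma$ already tile $\Nefe(Y_e)$ --- no local finiteness of mirrors, and no second appeal to Looijenga's criterion, is needed. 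I would suggest either adopting that route, or, if you want to keep yours, isolating and proving the finiteness statement you need; as written it is the whole difficulty of the theorem and it is left open.
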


\begin{remark}
The proof of Theorem \ref{coneConjectureYe} is similar to the argument of Sterk for $K3$ surfaces (see \cite{S85}).
\end{remark}

\begin{proof}[Proof of Theorem \ref{coneConjectureYe}]
By Theorem \ref{coneConjectureYgen}, the group $\Adm$ acts on $\Nefe(Y_{gen})$ with a rational polyhedral fundamental domain. Choose a rational point $y \in \Int(\Nefe(Y_{gen}))$ such that $y$ has trivial stabilizer in $\Adm$. Then by \cite{L81} Application 4.14, we obtain a rational polyhedral fundamental domain $\sigma(y)$ defined as follows:
\begin{center}
$\sigma(y) = \sigma := \{ x \in \Nefe(Y_{gen}) \; \vert \; \gamma x \cdot y \geq x \cdot y \text{ for all }\gamma \in \Adm \}$.
\end{center}

Let $\gamma = s_{\alpha}$, the reflection associated to a simple root $\alpha = [C]$ where $C \subset Y_{e} \setminus D_{e}$ is a $(-2)$-curve. Because $ s_{\alpha}(x) = x + (x \cdot \alpha) \alpha$, the condition
\begin{equation}
\gamma x \cdot y \geq x \cdot y
\end{equation}
is equivalent to
\begin{align*}
s_{\alpha}(x) \cdot y \geq x \cdot y &\Longleftrightarrow (x + (x \cdot \alpha) \alpha) \cdot y \geq x \cdot y \\
&\Longleftrightarrow  x \cdot y + (x \cdot \alpha)(\alpha \cdot y) \geq x \cdot y \\
&\Longleftrightarrow  (x \cdot \alpha)(\alpha \cdot y) \geq 0.
\end{align*}
Because $\alpha$ is effective and $y$ is ample (since $y \in \Int(\Nef(Y_{e}))$, which is the ample cone), the intersection $(\alpha \cdot y)$ is positive. Then $(x \cdot \alpha)(\alpha \cdot y) \geq 0$ if and only if $(x \cdot \alpha) \geq 0$. In particular, this shows the following:
\begin{center}
$\sigma \subset \Nefe(Y_{gen}) \cap (\alpha \geq 0 \; \forall \; \alpha \in \Delta) = \Nefe(Y_{e})$,
\end{center}
where $\Delta$ above denotes the simple roots (see Definition \ref{def:simpleRoots}) and the equality follows from the description of the nef cone in \cite{GHK15b}, Lemma 2.15.

The following statements are true:
\begin{enumerate}
\item $\sigma$ is rational polyhedral (\cite{L14}, Application 4.14) and $\sigma \subset \Nefe(Y_{e})$, as shown above;
\item $\Adm = W \rtimes \Aut(Y_{e}, D_{e})/K$ (\cite{GHK15b}, Theorem 5.1 or \cite{F15}, Theorem 9.6);
\item $\Nefe(Y_{e})$ is a fundamental domain for the action of $W$ on $\Nefe(Y_{gen})$ (\cite{GHK15b}, Theorem 3.2), and by the Torelli theorem (\cite{GHK15b}, Theorem 1.8), $\Aut(Y_{e}, D_{e})/K \leq \Adm$ is the normalizer of $\Nefe(Y_{e})$.
\end{enumerate}

Next, we show how the three statements above imply that $\sigma$ is a rational polyhedral fundamental domain for the action of $\Aut(Y_{e}, D_{e}) / K$ on $\Nefe(Y_{e})$. Let $g$ be an element of $\Adm$. By (2) above, there exist unique $w \in W$ and $\theta \in \Aut(Y_{e}, D_{e}) / K$ such that $g = w \theta$. We claim that the following inclusion holds:
\begin{equation}
\label{eqn:inclusion}
\bigl( g \sigma \bigr) \; \bigcap \; \Nefe(Y_{e}) \subset \theta \sigma
\end{equation}
To see why, let $\mathcal{C}$ be the cone
\begin{center}
$\mathcal{C} = (\alpha \geq 0 \text{ for } \alpha \in \Delta)$,
\end{center}
which is the fundamental chamber for the action of $W$ on $\Pic(Y) \otimes_{\mathbb{Z}} \mathbb{R}$, and we have $\sigma \subset \Nefe(Y_{e}) \subset \mathcal{C}$.
From above, $g = w \theta$. Let $x \in (g \sigma) \; \bigcap \; \Nefe(Y_{e})$. The group $\Aut(Y_{e}, D_{e}) / K$ acts on $\Nefe(Y_{e})$. Then $x \in \mathcal{C}$ and $x = gu = w \theta u$, where $u \in \sigma \subset \Nefe(Y_{e})$ and so $\theta u \in \Nefe(Y_{e}) \subset \mathcal{C}$. Thus $\theta u$ and $w \theta u$ are in $\mathcal{C}$. So $\theta u \in w \mathcal{C} \; \bigcap \; \mathcal{C} \subset \Fix(w)$ by Sterk (\cite{S85}, Lemma 1.2), which means that $w \theta u = \theta u$, i.e., $x = \theta u$. Then $x = \theta u \in \theta \sigma$, proving the inclusion \ref{eqn:inclusion}.

Finally, we want to show that $\sigma$ is a rational polyhedral fundamental domain for the action of $\Aut(Y_{e}, D_{e})/K$ on $\Nefe(Y_{e})$. By (1), $\sigma$ is rational polyhedral, so it remains to show that it is a fundamental domain. Because $\sigma$ is a fundamental domain for the action of $\Adm$ on $\Nefe(Y_{gen})$,
\begin{center}
$\displaystyle{\Nefe(Y_{gen}) = \bigcup_{g \in \Adm} \; g \sigma}$.
\end{center}
We can write 
\begin{align*}
\Nefe(Y_{gen}) \; \bigcap \; \Nefe(Y_{e}) &= \bigl( \bigcup_{g \in \Adm} \; g \sigma \bigr) \; \cap \; \Nefe(Y_{e}) \\
&= \bigcup_{g \in \Adm} \; (g \sigma \; \cap \; \Nefe(Y_{e})) \\
&= \bigcup_{\theta \in \Aut(Y_{e}, D_{e})/K} \; \theta \sigma.
\end{align*}

Here is why the last equality holds: if $g = w \theta$, then we showed above that $(g \sigma) \; \bigcap \; \Nefe(Y_{e}) \subset \theta \sigma$. If $w = 1$, then $g = \theta \in \Aut(Y_{e}, D_{e}) / K$ and
\begin{center}
$(g \sigma) \; \bigcap \; \Nefe(Y_{e}) = (\theta \sigma) \; \bigcap \; \Nefe(Y_{e}) = \theta \sigma$,
\end{center}
because $\theta$ preserves $\Nefe(Y_{e})$ and $\sigma \subset \Nefe(Y_{e})$. Moreover, because
\begin{center}
$\Int \bigl( g_{1} \sigma \; \bigcap \; g_{2} \sigma \bigr) = \emptyset \; \forall \; g_{1}, g_{2} \in \Adm$,
\end{center}
it follows that the same statement holds for $g_{1}, g_{2}$ in the smaller group $\Aut(Y_{e}, D_{e})/K$. That is,
\begin{center}
$\Int \bigl( g_{1} \sigma \; \bigcap \; g_{2} \sigma \bigr) = \emptyset \; \forall \; g_{1}, g_{2} \in \Aut(Y_{e}, D_{e})/K$,
\end{center}
which is the second property in the definition of a fundamental domain. Therefore we have shown, using conditions (1), (2), and (3), that $\sigma$ is a rational polyhedral fundamental domain for the action of $\Aut(Y_{e}, D_{e})/K$ on $\Nefe(Y_{e})$.
\end{proof}

%
%

\section{NEW EXAMPLES OF MORI DREAM SPACES}
\label{NewExamplesMDS}

\begin{definition} 
\label{def:MoriDreamSpace}
(cf. \cite{HK00}, Definition 1.10) Let $Y$ be a smooth projective surface. Then $Y$ is a Mori Dream Space if the following are true:
\begin{enumerate}
\item $\Pic(Y) \otimes \mathbb{R} = N^{1}(Y)$;
\item $\Nef(Y)$ is rational polyhedral; and 
\item If $L$ is a nef divisor on $Y$, then $L$ is semiample.
\end{enumerate}
\end{definition}

\begin{theorem}
\label{thm:proofn1-6}
A log Calabi-Yau surface $(Y_{e}, D_{e})$ with split mixed Hodge structure in which the boundary $D_{e}$ consists of no more than six components has a rational polyhedral cone of curves.
\end{theorem}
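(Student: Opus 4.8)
The plan is to show directly that $\oCurv(Y_e)$ — which for a surface equals $\Eff(Y_e)$ — is generated by finitely many rational curve classes; this is exactly rational polyhedrality, and it dualizes to $\Nef(Y_e)$ being rational polyhedral as well. I would organize the argument as a case analysis on $n$, the number of components of $D_e$, using the structural description of $(Y_e,D_e)$ forced by the split mixed Hodge structure, together with the identity $\Nef(Y_e) = \Nef(Y_{gen}) \cap (\delta \geq 0 \ \forall\, \delta \in \Delta)$ and the description of $\Nef(Y_{gen})$ recalled above.

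First I would enumerate the possible extremal generators of the cone. Any irreducible curve $C$ on $Y_e$ with $C^2 < 0$ is either a boundary component $D_i$, or — being disjoint from $D_e$, by adjunction since $-K_{Y_e} = D_e$ — a $(-2)$-curve, i.e.\ a simple root in $\Delta$, or it meets $D_e$ and is an interior $(-1)$-curve. Hence $\oCurv(Y_e)$ is spanned by $\{D_1,\dots,D_n\}$, by $\Delta$, by the interior $(-1)$-curves, and by the subcone contained in $\{z^2 \geq 0\}$, whose extremal rays can only be isotropic fiber classes of fibrations $Y_e \to \mathbb{P}^1$; I must bound each of these finitely. The decisive ingredient is the classification of deformation types of log Calabi-Yau surfaces with $n \leq 6$: organizing by $n$ and by the self-intersection sequence of the anticanonical cycle, the split mixed Hodge structure pins $(Y_e,D_e)$ down as a specific iterated blowup of a toric model, with all interior blowups occurring at the canonical identity points of the torus-invariant copies of $\mathbb{G}_m$. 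It follows that the interior $(-2)$-curves form $A$-type chains — so $\Delta$ is of finite or affine Dynkin type, hence finite — and that the interior $(-1)$-curves occur only at the ends of such chains plus finitely many "toric" ones. When the lattice $\langle D_1,\dots,D_n\rangle^{\bot}$ is negative definite, $\Adm$ is finite (it embeds in the finite orthogonal group of that lattice), so by Friedman's finite-orbit results (Theorem \ref{thm:FriedmanFiniteOrbits} and Corollary \ref{cor:FriedmanMinus1}) there are only finitely many interior $(-1)$-curves, $Y_e$ has no fibration contributing extra rays, and $\oCurv(Y_e)$ is finitely generated by the curves above. When $\langle D_1,\dots,D_n\rangle^{\bot}$ is negative semidefinite of corank one, $|-K_{Y_e}|$ is a pencil giving an elliptic fibration whose reducible fibers are precisely the chains of $(-2)$-curves together with $D_e$, so the Mordell--Weil rank is zero, there are finitely many sections, and $\oCurv(Y_e)$ is generated by the fiber components (the $D_i$ and the $(-2)$-curves), the general fiber class $[-K_{Y_e}]$, and the finitely many sections.

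The hard part will be the step that genuinely uses $n \leq 6$: proving, via the classification, that for these $n$ the lattice $\langle D_1,\dots,D_n\rangle^{\bot}$ is only ever of finite or affine type, and hence that $Y_e$ has finitely many interior $(-1)$-curves — a surface with infinitely many $(-1)$-curves, such as a rational elliptic surface of positive Mordell--Weil rank, would have non-polyhedral cone of curves, so this finiteness is the whole content. The rest is the finite bookkeeping of reading off the generators in each of the finitely many shapes, which simultaneously gives the explicit description of $\oCurv(Y_e)$ promised in the statement.
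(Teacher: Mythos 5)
Your reduction of the problem is sound as far as it goes: by adjunction the irreducible negative curves on $Y_{e}$ are indeed the $D_{i}$, interior $(-2)$-curves, and interior $(-1)$-curves, and you correctly identify that the whole content is the finiteness of the set of irreducible interior $(-1)$-curves (plus the absence of a round part of the cone). But the mechanism you propose for that finiteness fails in the main case. Your dichotomy is on the signature of $\langle D_{1}, \dots, D_{n} \rangle^{\bot}$, and you treat only the cases where it is negative definite or negative semidefinite of corank one. Since $\Pic(Y_{e})$ has signature $(1, \rho - 1)$, the complement $\langle D_{1}, \dots, D_{n} \rangle^{\bot}$ is negative definite exactly when the boundary lattice $\langle D_{1}, \dots, D_{n} \rangle$ carries the positive direction, i.e.\ when $(D_{i} \cdot D_{j})$ is \emph{not} negative semidefinite --- and that is precisely the easy case, which the paper dispatches by the Cone Theorem (\cite{GHK15a}, Lemma 6.9). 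In the case of principal interest --- $(D_{i} \cdot D_{j})$ negative definite, so that $D$ contracts to a cusp --- the lattice $\langle D_{1}, \dots, D_{n} \rangle^{\bot}$ is hyperbolic of signature $(1, \ast)$, its orthogonal group is infinite, and $\Adm$ (which contains the Weyl group $W$) is infinite as well. So the step ``$\Adm$ is finite, hence by Friedman's finite-orbit theorem there are finitely many interior $(-1)$-curves'' collapses exactly where it is needed; note also that Theorem \ref{thm:FriedmanFiniteOrbits} concerns $(-1)$-curves on $Y_{gen}$, whereas on $Y_{e}$ most of those classes are represented by reducible divisors involving $(-2)$-curves, which is the real reason $Y_{e}$ has only finitely many irreducible ones. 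Your proposal contains no argument for this in the hyperbolic case.

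For contrast, the paper avoids any group-theoretic finiteness argument. It invokes the classification of the minimal models (Theorem \ref{thm:Looijenga} for $n \leq 5$, Theorem \ref{thm:Simonetti} for $n=6$), observes that the split mixed Hodge structure forces all interior blowups into chains over distinguished points, and then applies the elementary Lemma \ref{lem:curvYgenerators}: one exhibits a basis $\mathcal{B}$ of $N_{1}(Y_{e})$ by irreducible curves and checks by direct computation that every dual basis vector is an effective combination of the finite set $\{D_{i}\} \cup \{E_{i,j}\} \cup \{F_{k}\}$, whence $\oCurv(Y_{e})$ is generated by that finite set. The finiteness of the negative curves and the absence of a round part are then outputs of the computation rather than inputs. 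If you want to salvage your outline, you would have to replace the finiteness-of-$\Adm$ step by an argument of this explicit type (or by a direct proof that on $Y_{e}$ every interior $(-1)$-class other than the chain ends is reducible), since in the negative definite boundary case there is no a priori finiteness of the monodromy group to appeal to.
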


In addition, for each such surface, we give an explicit description of the cone of curves. We note that some similar calculations for $n \leq 5$ appear in \cite{L81}.

\begin{remark}
When the cone of curves of $Y$ has finitely many generators, it is automatically closed. Because the cones we describe below are all rational polyhedral, we have $\Curv(Y) = \oCurv(Y)$.
\end{remark}

In this next part, we only consider log Calabi-Yau surfaces with the split mixed Hodge structure. We will show that each surface $Y$ described for each $n \leq 6$ is the surface $Y_{e}$ in the given deformation type with the split mixed Hodge structure (that is, the period point $\phi$ given by $\phi(x) = 1$ for all $x \in \langle D_{1}, \dots, D_{n} \rangle_{\mathbb{Z}}^{\bot})$.

\begin{lemma}
Let $(Y, D)$ be a log Calabi-Yau surface and suppose that $\langle D_{1}, \dots, D_{n} \rangle_{\mathbb{Z}}^{\bot}$ is generated by classes of curves $C \subset Y \setminus D$. Then $\phi(x) = 1$ for all $x \in \langle D_{1}, \dots, D_{n} \rangle_{\mathbb{Z}}^{\bot}$.
\end{lemma}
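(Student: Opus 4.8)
The plan is to use the fact that the period point $\phi$ is a homomorphism of abelian groups: once we know $\phi([C]) = 1$ for every curve $C \subset Y \setminus D$ whose class appears among the generators of $\langle D_{1}, \dots, D_{n} \rangle_{\mathbb{Z}}^{\bot}$, it follows immediately that $\phi \equiv 1$ on the whole subgroup.

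First I would check that such a generator $[C]$ really does lie in the domain of $\phi$ and that $C$ is disjoint from $D$. Since $C \subset Y \setminus D$, the curve $C$ is not a component of $D$ and meets no $D_{i}$, so $C \cap D = \emptyset$; in particular $C \cdot D_{i} = 0$ for all $i$, consistent with $[C] \in \langle D_{1}, \dots, D_{n} \rangle^{\bot}$. Consequently $\mathcal{O}_{Y}(C)|_{D} \cong \mathcal{O}_{D}(C \cap D) = \mathcal{O}_{D}$ is the trivial line bundle, so its class is the identity of $\Pic^{0}(D)$. Unwinding Definition \ref{def:periodPoint}, $\phi([\mathcal{O}_{Y}(C)]) = \theta\bigl([\mathcal{O}_{Y}(C)|_{D}]\bigr) = \theta(0) = 1$, because the isomorphism $\theta\colon \Pic^{0}(D) \xrightarrow{\sim} \mathbb{C}^{\ast}$ carries the identity to $1$.

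Now, by hypothesis $\langle D_{1}, \dots, D_{n} \rangle_{\mathbb{Z}}^{\bot}$ is generated by classes of curves $C \subset Y \setminus D$, and $\phi$ is a group homomorphism taking the value $1$ on each such generator; hence $\phi(x) = 1$ for all $x \in \langle D_{1}, \dots, D_{n} \rangle_{\mathbb{Z}}^{\bot}$. This identifies $(Y,D)$ with $(Y_{e}, D_{e})$, which is exactly what is needed in Section \ref{NewExamplesMDS} to recognize the split surface in each deformation type with $n \leq 6$. There is no real obstacle in this argument; the only point demanding a moment's care is that a curve contained in $Y \setminus D$ is genuinely disjoint from $D$ — not merely numerically orthogonal to the $D_{i}$ — so that the restriction $\mathcal{O}_{Y}(C)|_{D}$ is literally trivial rather than just of degree zero on each $D_{i}$, and this is immediate once one notes $C \cap D = \emptyset$.
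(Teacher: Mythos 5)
Your argument is correct and follows essentially the same route as the paper's own proof: observe that a curve $C \subset Y \setminus D$ is disjoint from $D$, so $\mathcal{O}_{Y}(C)\vert_{D} \cong \mathcal{O}_{D}$ is trivial and $\phi([C]) = \theta(\mathcal{O}_{Y}(C)\vert_{D}) = 1$, then use that $\phi$ is a homomorphism and the classes $[C]$ generate $\langle D_{1}, \dots, D_{n}\rangle_{\mathbb{Z}}^{\bot}$. The extra care you take in distinguishing ``literally disjoint from $D$'' from ``numerically orthogonal to the $D_{i}$'' is a worthwhile clarification but does not change the substance.
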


\begin{proof}
Using the notation $\theta: \Pic^{0}(D) \xrightarrow{\sim} \mathbb{C}^{\ast}$ from Definition \ref{def:periodPoint}, we recall that $\phi([C]) = \theta(\mathcal{O}_{Y}(C) \vert_{D})$. Because $C \bigcap D = \emptyset$, the restriction $\mathcal{O}_{Y}(C) \vert_{D} = \mathcal{O}_{D}$ is the trivial line bundle on $D$. Then $\phi([C]) = 1$. From our assumption it follows that $\phi(x) = 1$ for all $x \in \langle D_{1}, \dots, D_{n} \rangle_{\mathbb{Z}}^{\bot}$.
\end{proof}

The lemma applies in our situation, because for the surfaces we describe in cases $n \leq 6$, the lattice $\langle D_{1}, \dots, D_{n} \rangle_{\mathbb{Z}}^{\bot}$ is generated by the classes of $(-2)$-curves $C$ in $Y \setminus D$.

\begin{remark}
We cover every deformation type for each $n \leq 6$ of log Calabi-Yau surfaces such that the intersection matrix $(D_{i} \cdot D_{j})$ is negative definite or negative semidefinite. This follows from two theorems below: Looijenga for $n \leq 5$ (Theorem \ref{thm:Looijenga}) and Simonetti for $n = 6$ (Theorem \ref{thm:Simonetti}).
\end{remark}

\begin{remark}
If the intersection matrix $(D_{i} \cdot D_{j})$ is not negative definite nor negative semidefinite, then $\Nef(Y)$ is rational polyhedral by the Cone Theorem (\cite{GHK15a}, Lemma 6.9).
\end{remark}

To keep the notation simple, we will use $(Y, D)$ to mean $(Y_{e}, D_{e})$ from here on in this section, unless otherwise specified. The proof of Theorem \ref{thm:proofn1-6} is split into the six cases $n=1, \dots, 6$. We use the following theorem and lemma. We note that in each of the cases considered, the number of boundary components remains the same.

\begin{theorem}
\label{thm:Looijenga} 
(\;\cite{L81} Theorem 1.1)
Let $Y$ be a smooth rational surface endowed with an anti-canonical cycle $D = D_{1} + \dots + D_{n}$ of length $n \leq 5$ such that $(D_{i} \cdot D_{j})$ is negative definite or negative semidefinite. Then there exists a sequence of blowdowns of interior $(-1)$-curves which gives a smooth rational surface $\bar{Y}$ endowed with an anticanonical cycle $\bar{D} = \bar{D}_{1} + \dots, + \bar{D}_{n}$, where $\bar{D}_{i}$ is the image of $D_{i}$ for each $i$, such that:
\begin{enumerate}
\item If $n = 1$, then $\bar{Y} \cong \mathbb{P}^{2}$ and $\bar{D}$ is a cubic curve with a node;
\item If $n = 2$, then $\bar{Y} \cong \mathbb{P}^{1} \times \mathbb{P}^{1}$ and $\bar{D}_{1}$ and $\bar{D}_{2}$ are $(1, 1)$-divisors.
\item If $n = 3$, then $\bar{Y} \cong \mathbb{P}^{2}$ and $\bar{D}$ is a triangle of lines (i.e., $\bar{D}$ is a choice of toric boundary); 
\item If $n = 4$, then $\bar{Y} \cong  \mathbb{P}^{1} \times \mathbb{P}^{1}$ and $\bar{D}$ is a square consisting of two fibers of each of the two projections $\bar{Y} \rightarrow \mathbb{P}^{1}$ (i.e., $\bar{D}$ is a choice of toric boundary); and
\item If $n = 5$, then $\bar{Y}$ is a del Pezzo surface of degree five (i.e., the blowup of four points on $\mathbb{P}^{2}$) and each component $\bar{D}_{i}$ of $\bar{D}$ is a $(-1)$-curve.
\end{enumerate}
\end{theorem}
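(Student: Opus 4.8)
The plan is to prove the theorem by repeatedly contracting interior $(-1)$-curves, steering the contraction toward the unique anticanonical pair with the given number $n$ of boundary components in which \emph{all} boundary self-intersection numbers are equal and as large as possible; a short check identifies these \emph{balanced} pairs as $(\mathbb{P}^2,\text{nodal cubic})$, $(\mathbb{F}_0,\{(1,1),(1,1)\})$, $(\mathbb{P}^2,\text{triangle})$, $(\mathbb{F}_0,\text{square of rulings})$ and $(\mathrm{dP}_5,\text{pentagon})$ for $n=1,\dots,5$. The starting observation is that an interior $(-1)$-curve $E$ meets $D=D_1+\dots+D_n$ transversally at a single smooth point on one component $D_i$ (Definition \ref{def:interior-1curve}), so contracting $E$ produces an anticanonical pair with an $n$-cycle of the same length, altering the intersection form only by raising the single diagonal entry $D_i^2$ by one; since $\rho(Y)$ drops by one at each contraction, any sequence of such contractions terminates. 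For fixed $n\le 5$ the statement thus splits into: (a) show the balanced pair is the surface named in the theorem, and (b) show that from any $(Y,D)$ with $(D_i\cdot D_j)$ negative (semi)definite one can choose a contraction sequence reaching it.

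For (a) I would invoke the classification of minimal rational surfaces. If $Y$ is minimal then $Y\cong\mathbb{P}^2$ or $Y\cong\mathbb{F}_k$ ($k=0$ or $k\ge 2$), and enumerating the reduced simple-normal-crossings anticanonical cycles of length $\le 5$ on these is a finite check that produces exactly the four pairs listed for $n\le 4$; for $n=5$ no minimal rational surface carries an anticanonical $5$-cycle, so I single out $(\mathrm{dP}_5,\text{pentagon})$ as the stopping point (any shorter anticanonical cycle on $\mathrm{dP}_5,\mathrm{dP}_6$ or $\mathrm{dP}_7$ contracts further down to $\mathbb{P}^2$ or $\mathbb{F}_0$, and the pentagon on $\mathrm{dP}_5$ is unique up to the $S_5$ of automorphisms). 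For (b) I would induct on $\rho(Y)$: if $(Y,D)$ is not already balanced, its least boundary self-intersection is strictly below the common value of the model, so it suffices to find an interior $(-1)$-curve meeting a component whose self-intersection is not yet maximal; contracting it and applying the inductive hypothesis closes the loop, once one checks — by tracking the diagonal of the intersection form — that the contracted pair still dominates the model.

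\textbf{The crux is this existence-and-routing step in (b)}: using only negative (semi)definiteness, one must guarantee that a \emph{usable} interior $(-1)$-curve (one meeting an ``under-maximal'' boundary component) is available at every stage short of the balanced pair, since a careless choice can instead get stuck at a different terminal pair — for instance at $(\mathbb{F}_0,\text{the }(2,0,0)\text{-triangle})$ rather than $(\mathbb{P}^2,\text{triangle})$ for $n=3$, or at $(\mathrm{dP}_7,\text{pentagon})$ rather than $\mathrm{dP}_5$. I expect this to follow from Riemann--Roch and intersection theory on $Y$ — using that $-K_Y=D$ is effective and that $(-K_Y)^2=10-\rho(Y)$ is small in the relevant range, which forces enough effective curves of negative self-intersection to locate the needed $(-1)$-curve — together with an elementary-transformation argument to rebalance the configuration whenever every available interior $(-1)$-curve meets an already-maximal component. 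Once (a) and (b) are in place, the five explicit descriptions of $(\bar Y,\bar D)$ are read off directly from the balanced pairs produced in (a).
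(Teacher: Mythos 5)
First, note that the paper does not prove this statement: Theorem \ref{thm:Looijenga} is quoted directly from Looijenga (\cite{L81}, Theorem 1.1) and used as a black box, so there is no internal proof to compare yours against; your proposal has to stand on its own. Its skeleton is sound as far as it goes: an interior $(-1)$-curve meets $D$ transversally at one smooth point of a single component, so contracting it preserves the cycle length and raises one diagonal entry $D_i^2$ by one, and any such sequence terminates because $\rho$ drops. But the step you yourself flag as the crux --- that at every stage short of the named model a \emph{usable} interior $(-1)$-curve exists and that the contractions can be routed to the model --- is the entire mathematical content of the theorem, and ``I expect this to follow from Riemann--Roch and intersection theory'' is not an argument. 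Worse, the proposed rescue when every available $(-1)$-curve meets an already-maximal component --- an elementary transformation to rebalance --- is not an admissible move: the theorem demands a sequence of blowdowns of interior $(-1)$-curves starting from the given $Y$, whereas an elementary transformation replaces an intermediate surface by a non-isomorphic one and so cannot be spliced into the contraction sequence. The danger is real: a bad route can terminate at a minimal pair other than the model (your own example $(\mathbb{F}_0, (0,0,2)\text{-triangle})$ for $n=3$, or $(\mathbb{F}_k,\, B + (B+(k+2)A))$ for $n=2$ and any $k$), and from a minimal surface no further blowdown exists. So one must prove \emph{a priori} that a good route exists, and that is precisely where the negative (semi)definiteness of $(D_i\cdot D_j)$ must do work; your sketch never actually uses it.

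Step (a) is also incorrect as stated. Enumerating anticanonical cycles of length $\le 4$ on minimal rational surfaces does not produce ``exactly the four pairs listed'': every Hirzebruch surface $\mathbb{F}_k$ carries anticanonical cycles of each length $1$ through $4$ (for $n=2$, take $D_1 = B$ the negative section and $D_2 \in |B+(k+2)A|$), so there are infinitely many minimal terminal pairs, and the theorem singles out one of them. Moreover ``all boundary self-intersections equal and maximal'' does not determine the pair: for $n=2$ the profile $(D_1^2,D_2^2)=(2,2)$ is realized both on $\mathbb{F}_0$ by two $(1,1)$-divisors and on $\mathbb{F}_2$ by two curves in $|B+2A|$ --- indeed the paper's own $n=2$ construction in Section \ref{NewExamplesMDS} starts from $\mathbb{F}_2$, not $\mathbb{F}_0$. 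So tracking the diagonal of the intersection form cannot certify that an intermediate pair ``still dominates the model,'' and the induction in (b) has no well-defined invariant to descend on. Both halves of the argument therefore have genuine gaps; the correct proof (Looijenga's) requires a substantially more careful analysis of which configurations can occur under the negative (semi)definiteness hypothesis.
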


\noindent The general blowup picture is shown in Figure \ref{fig:general-bus}: each boundary component is linked to a ``chain" of a single $(-1)$-curve, followed by arbitrarily many $(-2)$-curves. A general chain is shown in Figure \ref{fig:general-chain}.

\begin{figure}
\centering
\includegraphics[scale=0.4]{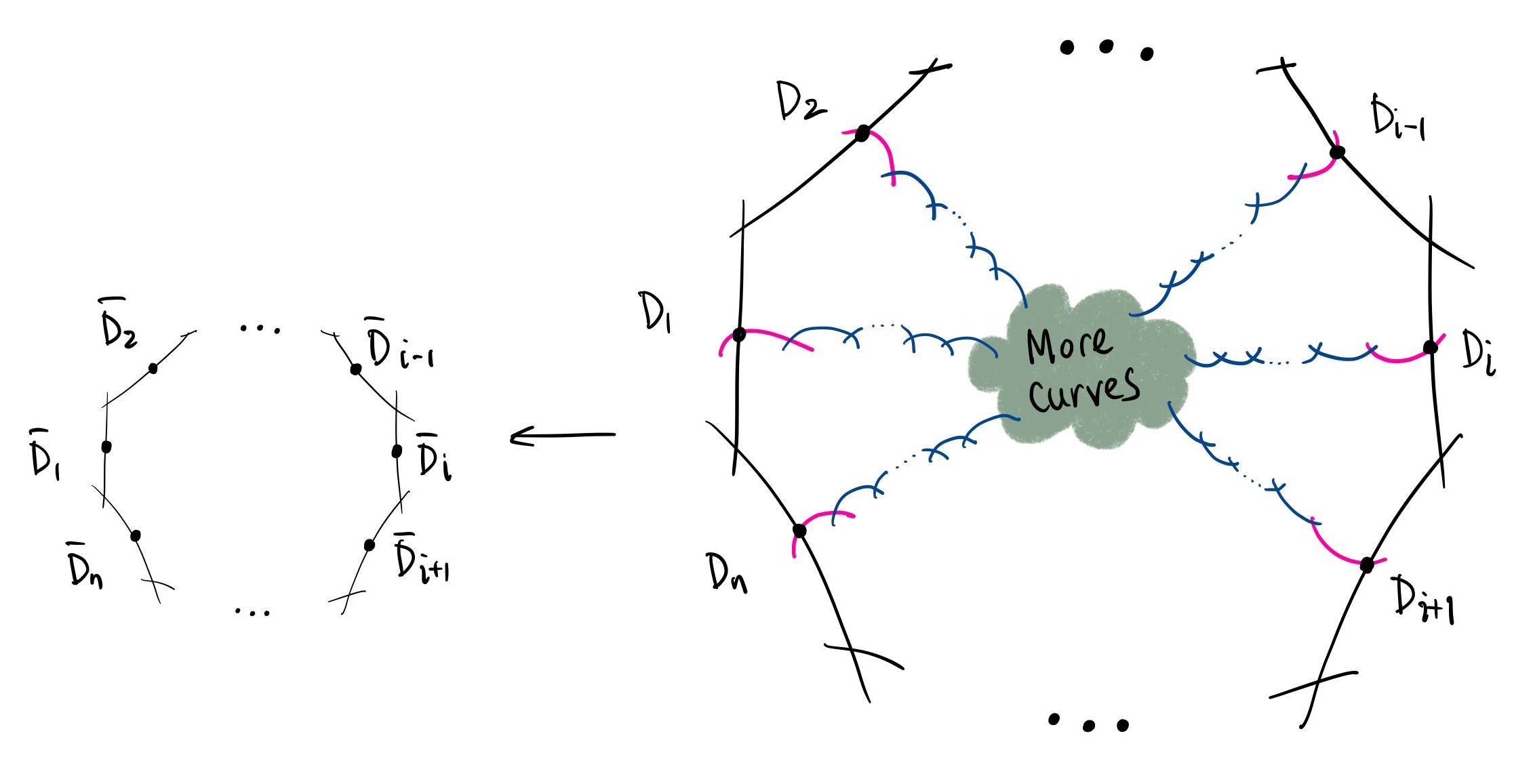}
\caption[Some number of blowups of $(\bar{Y}, \bar{D})$]{This drawing shows arbitrarily many blowups of the surface $(\bar{Y}, \bar{D})$. The blowup at each point creates a chain of one $(-1)$-curve, intersecting the boundary component at one point, followed by some number $(-2)$-curves that lead to a common ``central region". The center consists of additional curves.}
\label{fig:general-bus}
\end{figure}

\begin{figure}
\centering
\includegraphics[scale=0.4]{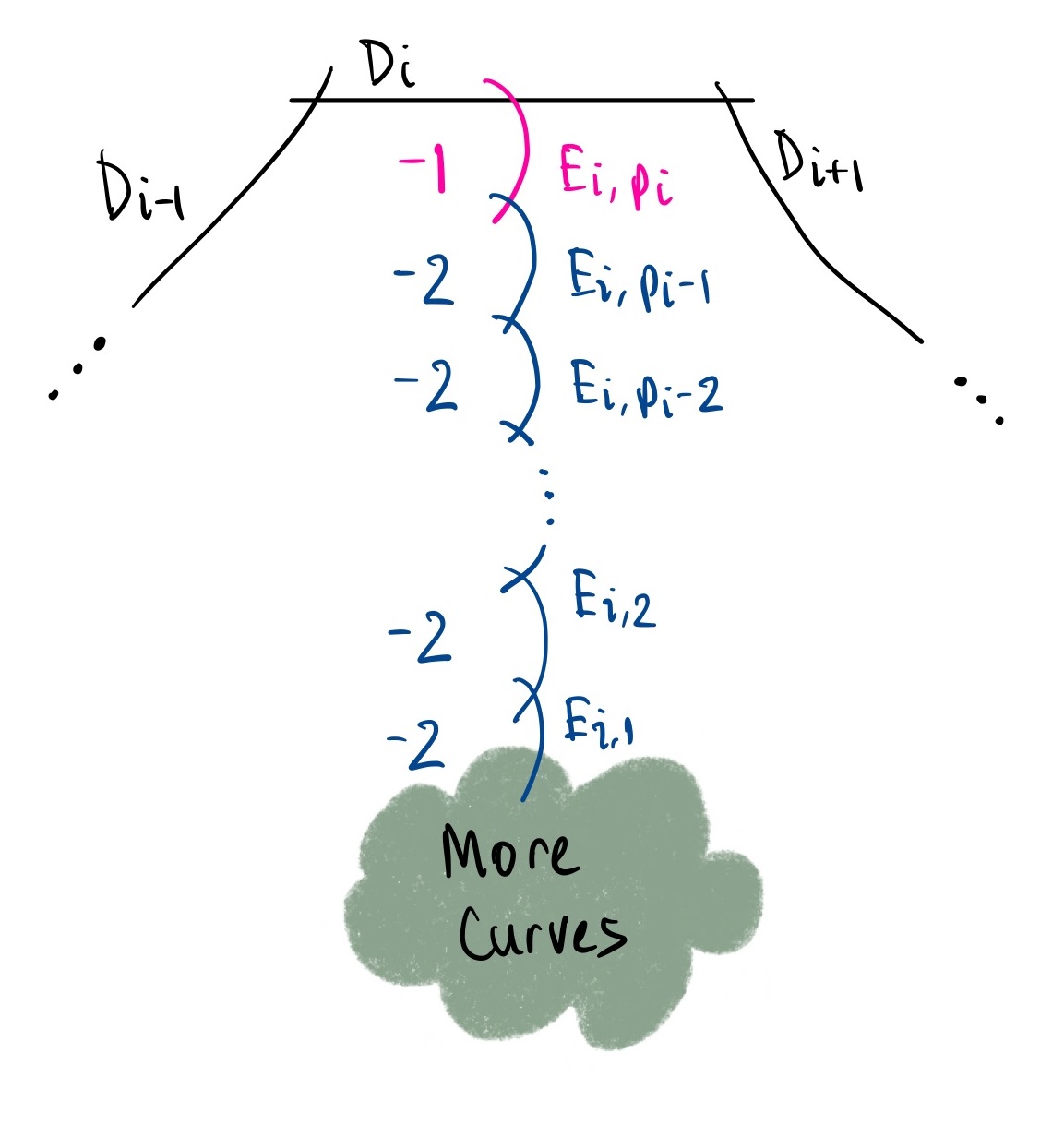}
\caption[A general ``chain" after some number of blowups]{This drawing depicts a general ``chain" that forms after arbitrarily many blowups. For a boundary of length $n$, the value of $i$ ranges from 1 to $n$.}
\label{fig:general-chain}
\end{figure}

\begin{lemma}
\label{lem:curvYgenerators}
Let $Y$ be a smooth projective complex surface. Let $\mathcal{B}$ be a basis for $N_{1}(Y)$ consisting of irreducible curves. Suppose the dual basis may be expressed as effective combinations of a set $\mathcal{C}$ of curves. Then $\oCurv(Y) = \langle \mathcal{B} \; \cup \; \mathcal{C} \rangle_{\mathbb{R} \geq 0}$.
\end{lemma}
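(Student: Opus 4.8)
The plan is to prove the two inclusions $\langle \mathcal{B} \cup \mathcal{C} \rangle_{\mathbb{R}\geq 0} \subseteq \oCurv(Y)$ and $\oCurv(Y) \subseteq \langle \mathcal{B} \cup \mathcal{C} \rangle_{\mathbb{R}\geq 0}$ separately. The first inclusion is immediate: every element of $\mathcal{B}$ is an irreducible curve by hypothesis and every element of $\mathcal{C}$ is a curve, so both $\mathcal{B}$ and $\mathcal{C}$ lie in $\oCurv(Y)$, which is a convex cone, hence it contains all nonnegative combinations of $\mathcal{B} \cup \mathcal{C}$.

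For the reverse inclusion, write $\mathcal{B} = \{B_1, \dots, B_\rho\}$ where $\rho = \dim N_1(Y)$, and let $\{B_1^{\vee}, \dots, B_\rho^{\vee}\}$ be the dual basis of $N^1(Y) = N_1(Y)$ with respect to the intersection pairing, so $B_i^{\vee} \cdot B_j = \delta_{ij}$. Given an arbitrary irreducible curve $C \subset Y$, I would expand $[C] = \sum_i (B_i^{\vee} \cdot C)\, B_i$ in the basis $\mathcal{B}$. The coefficient $B_i^{\vee} \cdot C$ need not be nonnegative a priori, so the idea is: by hypothesis each $B_i^{\vee}$ is an effective combination of curves in $\mathcal{C}$, say $B_i^{\vee} = \sum_{D \in \mathcal{C}} c_{i,D}\, D$ with $c_{i,D} \geq 0$; then $B_i^{\vee} \cdot C = \sum_D c_{i,D}\, (D \cdot C)$. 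If no component of $C$ appears in the support of $B_i^{\vee}$, then $D \cdot C \geq 0$ for all relevant $D$ and the coefficient is nonnegative. The finitely many irreducible curves $C$ that do appear as a component of some $B_i^{\vee}$ must be handled directly — but each such $C$ already lies in $\mathcal{C} \subseteq \langle \mathcal{B} \cup \mathcal{C}\rangle_{\mathbb{R}\geq 0}$. So for every irreducible curve $C$, either $[C] \in \langle \mathcal{B} \rangle_{\mathbb{R}\geq 0}$ or $[C] \in \mathcal{C}$, and in both cases $[C] \in \langle \mathcal{B} \cup \mathcal{C}\rangle_{\mathbb{R}\geq 0}$. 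Since $\oCurv(Y)$ is generated as a cone by the classes of irreducible curves, this gives $\oCurv(Y) \subseteq \langle \mathcal{B} \cup \mathcal{C}\rangle_{\mathbb{R}\geq 0}$.

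The main obstacle to watch is the case analysis in the reverse inclusion: one must be careful that when $C$ is a component of some $B_i^{\vee}$ it genuinely lies in $\mathcal{C}$ — this should follow because $B_i^{\vee}$ is written as an effective combination \emph{of curves in $\mathcal{C}$}, so every component of its support is an element of $\mathcal{C}$. The other subtlety is purely bookkeeping: ensuring that "effective combination of curves in $\mathcal{C}$" is interpreted so that the supporting curves themselves are in $\mathcal{C}$ (not merely numerically equivalent to such a combination); if only numerical equivalence is assumed, one instead argues that $B_i^{\vee}\cdot C \geq 0$ whenever $C$ meets none of the finitely many curves in $\mathcal{C}$ properly, and absorbs the finitely many exceptional curves into a slightly enlarged generating set — but in the intended application $\mathcal{C}$ is a concrete finite set of $(-2)$-curves and $(-1)$-curves, so the clean version applies. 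No delicate estimates or limiting arguments are needed; the content is linear algebra over the intersection form plus the observation that intersection numbers of distinct irreducible curves are nonnegative.
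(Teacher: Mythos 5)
Your proof is correct, but it runs the dual-basis argument in the opposite direction from the paper, and the paper's direction is cleaner. You expand an irreducible curve $C$ in the basis $\mathcal{B}$, as $[C]=\sum_i (B_i^{\vee}\cdot C)\,B_i$, and must then argue that the coefficients $B_i^{\vee}\cdot C$ are nonnegative; since $B_i^{\vee}$ is only known to be an effective combination of curves in $\mathcal{C}$, this forces the case analysis over whether $C$ occurs in the support of some $B_i^{\vee}$, and it makes the argument sensitive to the interpretation issue you yourself flag (an effective combination of actual curves versus a merely numerically equivalent one). The paper instead expands $C$ in the dual basis: $[C]=\sum_i (C\cdot B_i)\,B_i^{\ast}$. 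For $C\notin\mathcal{B}$ the coefficients $C\cdot B_i$ are automatically nonnegative because the $B_i$ are irreducible curves distinct from $C$ --- this is exactly where the hypothesis that $\mathcal{B}$ consists of irreducible curves enters --- and then $B_i^{\ast}\in\langle\mathcal{C}\rangle_{\mathbb{R}_{\geq 0}}$ gives $[C]\in\langle\mathcal{C}\rangle_{\mathbb{R}_{\geq 0}}$ in one line, with the finitely many curves of $\mathcal{B}$ absorbed as generators. That version needs no information about the supports of the $B_i^{\ast}$, so it is immune to the numerical-equivalence subtlety entirely. Both arguments are valid; yours trades the paper's one-line conclusion for some extra bookkeeping, while the paper's buys robustness by using the irreducibility of the $B_i$ rather than the geometry of the representatives of the $B_i^{\ast}$.
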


\begin{proof}
Let $C \subset Y$ be a curve and suppose that $C \notin \mathcal{B}$. Then $C \cdot B_{i} \geq 0$ for all $B_{i} \in \mathcal{B}$. Since $B_{i}^{\ast}$ is an effective linear combination of elements in $\mathcal{C}$ and $C \cdot B_{i} \geq 0$, it follows that $C = \sum (C \cdot B_{i}) B_{i}^{\ast}$ belongs to $\langle \; \mathcal{C} \; \rangle_{\mathbb{R}_{\geq 0}}$.
\end{proof}

\subsection*{Number of boundary components $n=1$.}
\label{subsec:n1}
Let $\bar{Y} = \mathbb{P}^{2}$ with a rational nodal curve $\bar{D}_{1}$ and a flex point $q$. In coordinates, we may take
\begin{center}
$\bar{D}_{1}: (X_{0}X_{2}^{2} = X_{1}^{2}(X_{1} + X_{0})) \subseteq \mathbb{P}^{2}_{(X_{0}: X_{1}: X_{2})}$ and $q = (0:0:1)$.
\end{center}
We denote the tangent line at point $q$ by $\bar{F}$, and we blow up the point $q$ some number $p_{1}$ of times.

\begin{figure}
\centering
\includegraphics[scale=0.4]{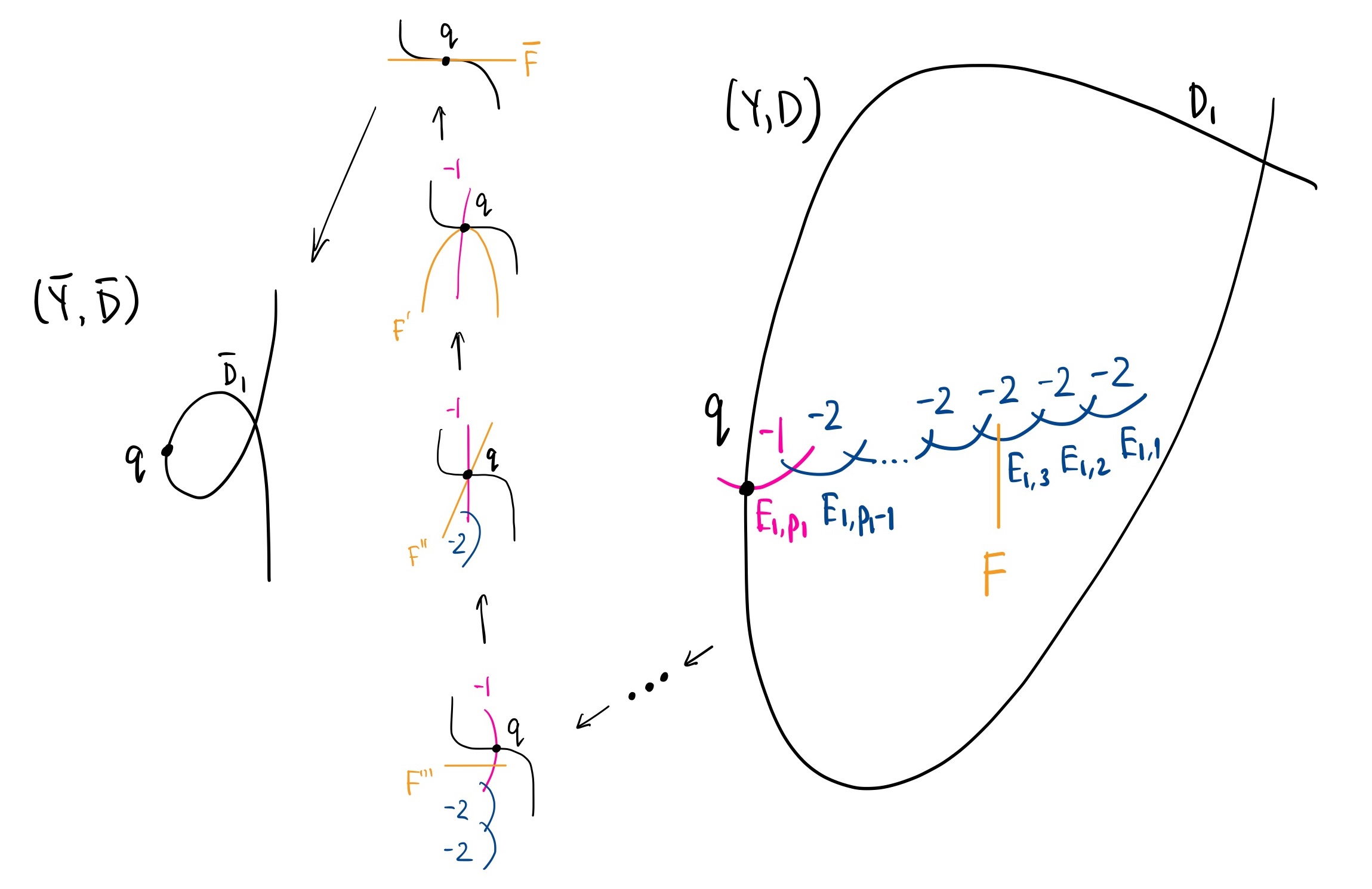}
\caption[The general picture for $n=1$]{The drawing on the far left shows $(\bar{Y}, \bar{D})$ before any blowups. The middle drawing shows the first three blowups at the point $q$, and the figure on the right depicts $(Y, D)$ after arbitrarily many, say $p_{1}$, blowups at $q$.}
\label{fig:n1-bus}
\end{figure}

A basis for $\Pic(Y)$ is
\begin{center}
$\mathcal{B}_{1} = \{E_{1, j}, F \; \vert \; 1 \leq j \leq p_{1}\}$.
\end{center}
and its dual basis $\mathcal{B}^{\ast}_{1}$ consists of the following elements:
\begin{align*}
E_{1, p_{1}}^{\ast} &= D_{1} \\
E_{1, p_{1}-1}^{\ast} &=  D_{1} + E_{1, p_{1}} \\
E_{1, p_{1}-2}^{\ast} &= D_{1} + 2E_{1, p_{1}} + E_{1, p_{1}-1} \\
E_{1, p_{1}-3}^{\ast} &= D_{1} + 3E_{1, p_{1}} + 2E_{1, p_{1}-1} + E_{1, p_{1}-2} \\
\vdots \\
E_{1, j}^{\ast} &= D_{1} + (p_{1} - j) E_{1, p_{1}} + (p_{1} - j - 1)E_{1, p_{1}-1} + \cdots + 2E_{1, j+2} + E_{1, {j+1}} \; \text{ for } 3 \leq j \leq p_{1};
\end{align*}
and
\begin{align*}
E_{1, 2}^{\ast} &= 4E_{1, p_{1}} + 4E_{1, p_{1}-1} + \cdots + 4E_{1, 4} + 4E_{1, 3} + 2E_{1, 2} + E_{1,1} + 2F \\
E_{1, 1}^{\ast} &= 2E_{1, p_{1}} + 2E_{1, p_{1}-1} + \cdots + 2E_{1, 4} + 2E_{1, 3} +  E_{1, 2} + F \\
F^{\ast} &= 3E_{1, p_{1}} + 3E_{1, p_{1}-1} + \cdots + 3E_{1, 4} + 3E_{1, 3} + 2E_{1, 2} + E_{1, 1} + F
\end{align*}
By Lemma \ref{lem:curvYgenerators}, we can describe the cone of curves as follows:
\begin{center}
$\oCurv(Y) = \langle D_{1}, E_{1, j}, F \; \vert \; 1 \leq j \leq p_{1} \rangle_{\mathbb{R}_{\geq 0}}$
\end{center}

\subsection*{Number of boundary components $n=2$.}
\label{subsec:n2}
Let $\bar{Y}$ be the Hirzebruch surface $\mathbb{F}_{2}$ with two smooth curves $\bar{D}_{1}$ and $\bar{D}_{2}$ in the linear system $\vert B + 2A \vert$. Here $B$ denotes the negative section of the $\mathbb{P}^{1}$ fibration $\mathbb{F}_{2} \rightarrow \mathbb{P}^{1}$ and $A$ denotes the fiber. We may assume that the curves $\bar{D}_{1}$ and $\bar{D}_{2}$ intersect transversely. We fix two points $q_{i} \in \bar{D}_{i}$ where $i = 1, 2$, such that the points lie on a common fiber $\bar{F}_{1}$, and let $\bar{F}_{2}$ be the $(-2)$-curve (see Figure \ref{fig:n2-Fcurves}). Then blow up at the points $q_{i}$ some number of times, which is shown in Figure \ref{fig:n2-bus} (our notation is that we blow up a total of $p_{i}$ times at points $q_{i}$ for $i = 1, 2$). The curves $F_{i}$ are the strict transforms of $\bar{F}_{i}$ for $i = 1, 2$.

\begin{figure}
\centering
\includegraphics[scale=0.3]{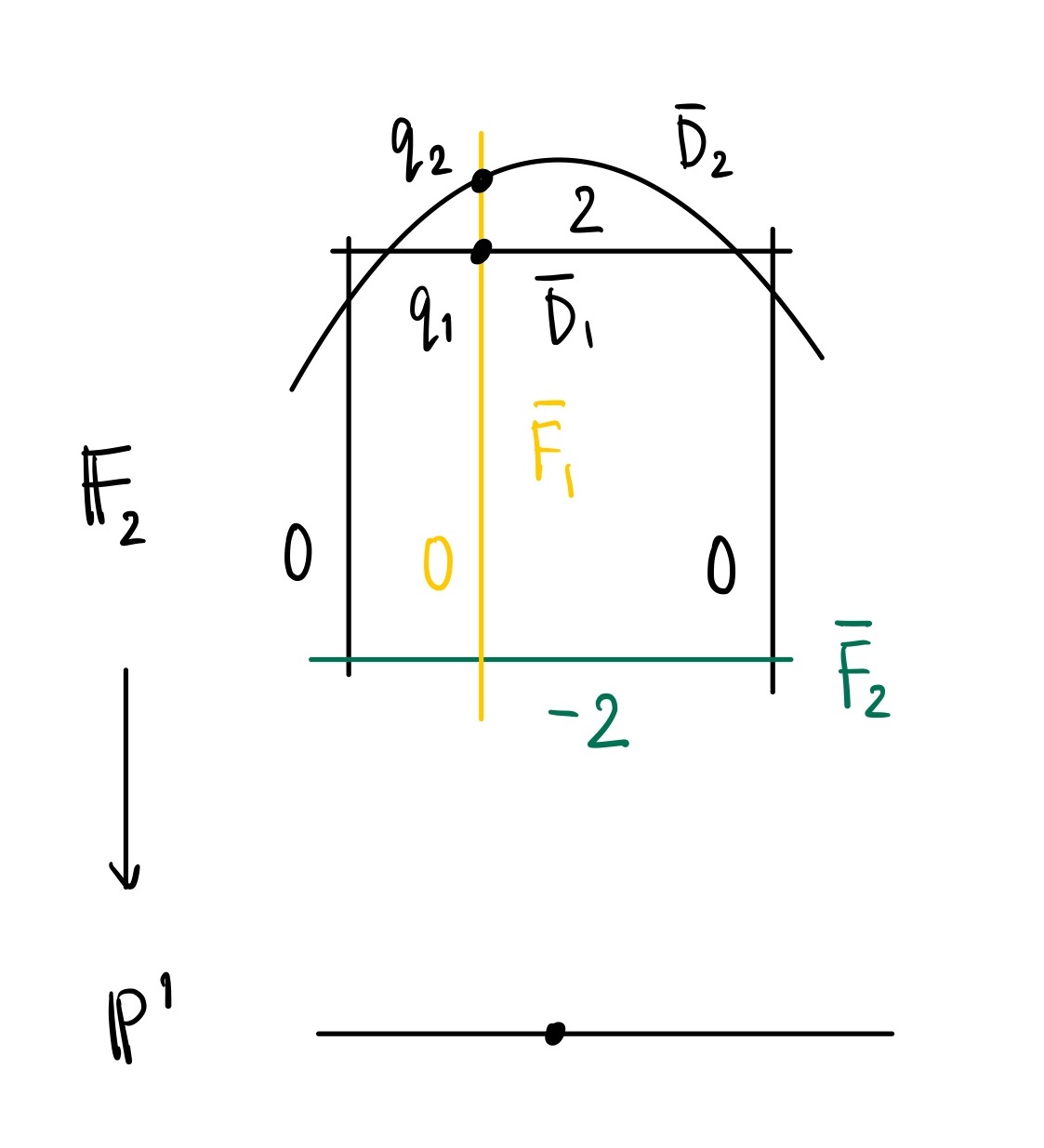}
\caption[The curves $\bar{F}_{1}$ and $\bar{F}_{2}$ in case $n=2$]{This drawing shows the curves $\bar{F}_{1}$ and $\bar{F}_{2}$ in the case $n=2$.}
\label{fig:n2-Fcurves}
\end{figure}

\begin{figure}
\centering
\includegraphics[scale=0.5]{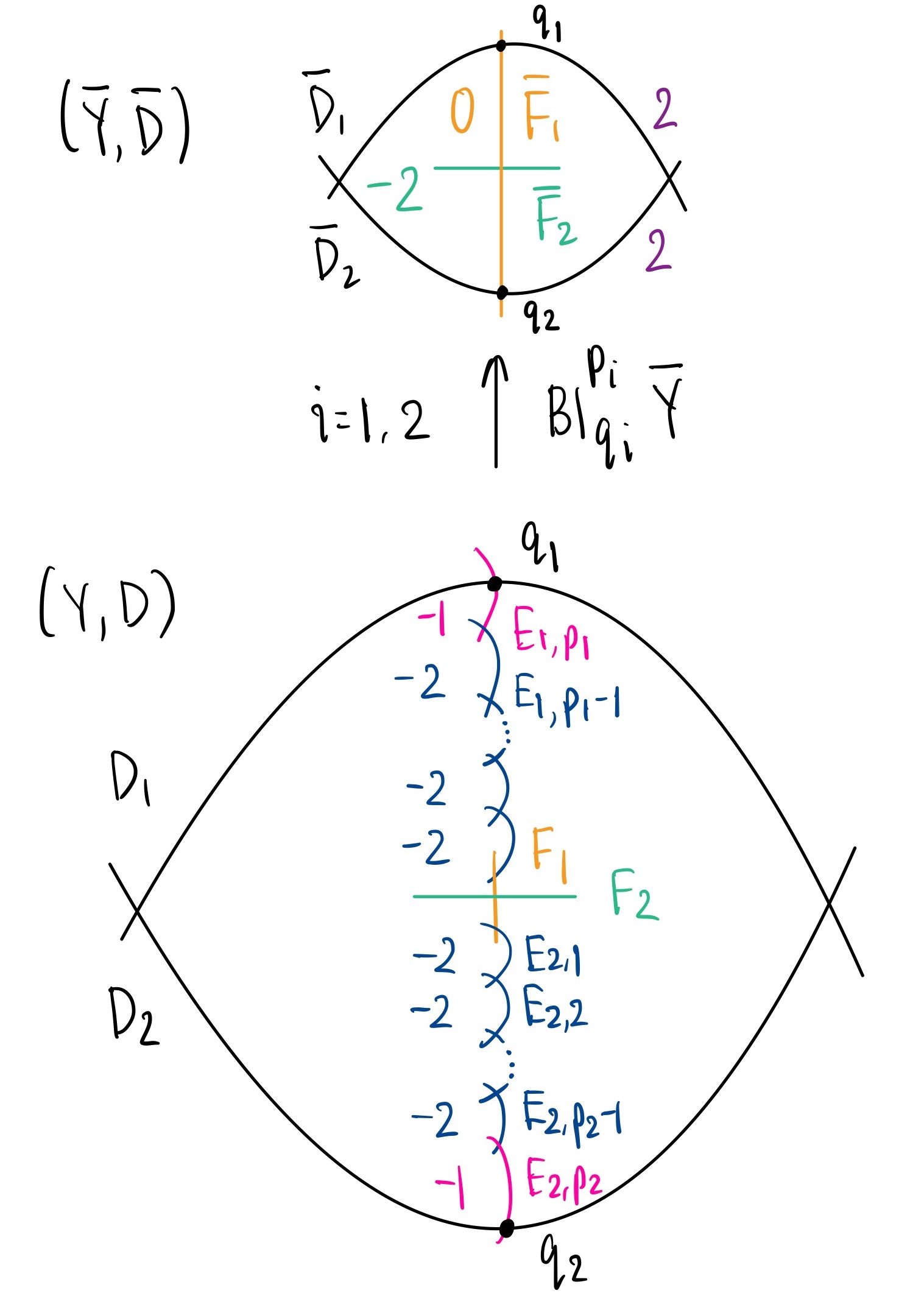}
\caption[The general picture for $n=2$]{The blowup of $(\bar{Y}, \bar{D})$ at the two points $q_{1}$ and $q_{2}$, a total of $p_{1}$ and $p_{2}$ times respectively, results in $(Y, D)$.}
\label{fig:n2-bus}
\end{figure}

A basis $\mathcal{B}_{2}$ for $\Pic(Y)$ is given by
\begin{center}
$\mathcal{B}_{2} = \{ E_{i, j}, F_{i} \; \vert \; i = 1, 2 \text{ and } 1 \leq j \leq p_{i}\}$.
\end{center}

The dual basis $\mathcal{B}^{\ast}_{2}$ consists of the following elements:
\begin{center}
$\mathcal{B}^{\ast}_{2} = \{ E_{i, j}^{\ast}, F_{1}^{\ast}, F_{2}^{\ast} \; \vert \;  i = 1, 2 \text{ and } 1 \leq j \leq p_{i} \}$,
\end{center}
where for $i = 1, 2$,
\begin{align*}
E_{i, p_{i}}^{\ast} &= D_{i} \\
E_{i, p_{i} - 1}^{\ast} &= D_{i} + E_{i, p_{i}} \\
E_{i, p_{i} - 2}^{\ast} &= D_{i} + 2E_{i, p_{i}} + E_{i, p_{i} - 1} \\
&\vdots \\
E_{i, 1}^{\ast} &= D_{i} + (p_{i}-1) E_{i, p_{i}} + (p_{i} - 2) E_{i, p_{i}-1} + \cdots + 2E_{i, 3} + E_{i, 2}
\end{align*}
and
\begin{align*}
F_{1}^{\ast} &= D_{i} + p_{i}E_{i, p_{i}} + (p_{i} - 1)E_{i, p_{i} - 1} + \cdots + 2E_{i, 2} + E_{i, 1} \; \text{ for } i = 1 \text{ or } 2,
\end{align*}
and 
\begin{align*}
F_{2}^{\ast} &= D_{i} + (p_{i} + 1)E_{i, p_{i}} + p_{i} E_{i, p_{i} - 1} + \cdots + 2E_{i, 1} + F_{1} \; \text{ for } i = 1 \text{ or } 2.
\end{align*}
By Lemma \ref{lem:curvYgenerators}, we can describe the cone of curves as follows:
\begin{center}
$\oCurv(Y) = \langle D_{i}, E_{i, j}, F \; \vert \; i = 1, 2 \text{ and } 1 \leq j \leq p_{i} \rangle_{\mathbb{R}_{\geq 0}}$.
\end{center}

\subsection*{Number of boundary components $n=3$.}
\label{subsec:n3}

Let $\bar{Y} = \mathbb{P}^{2}$ with $\bar{D} = \bar{D}_{1} + \bar{D}_{2} + \bar{D}_{3}$ its toric boundary, which is the union of three lines. Fix three collinear points $q_{i} \in \bar{D}_{i}$ where $i = 1, 2, 3$ and blow them up some number of times. Let $F$ be the strict transform of the line $\bar{F}$ passing through the three points (see Figure \ref{fig:n3-pBUs}).

\begin{figure}
\centering
\includegraphics[scale=0.25]{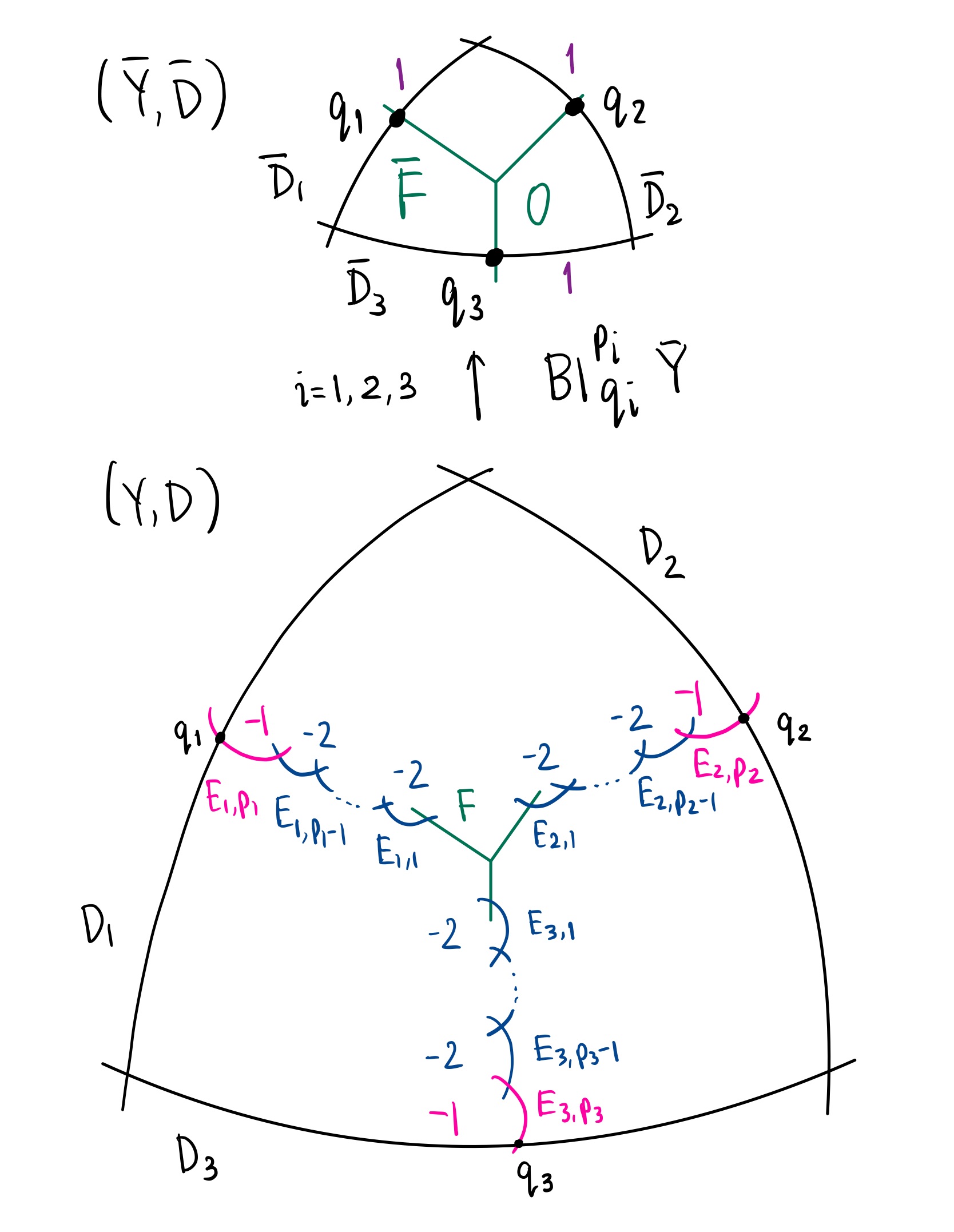}
\caption[The general picture for $n=3$]{After $(\bar{Y}, \bar{D})$ is blown up arbitrarily many $(p_{i})$ times at each point $q_{i}$, the resulting pair is $(Y, D)$.}
\label{fig:n3-pBUs}
\end{figure}

A basis $\mathcal{B}_{3}$ for $\Pic(Y)$ is given by
\begin{center}
$\mathcal{B}_{3} = \{ E_{i, j}, F \; \vert \; 1 \leq i \leq 3 \text{ and } 1 \leq j \leq p_{i} \}$.
\end{center}
The dual basis $\mathcal{B}^{\ast}_{3}$ consists of the elements, for $i = 1, 2, 3$:
\begin{align*}
E_{i, p_{i}}^{\ast} &= D_{i} \\
E_{i, p_{i} - 1}^{\ast} &= D_{i} + E_{i, p_{i}} \\
E_{i, p_{i} - 2}^{\ast} &= D_{i} + 2E_{i, p_{i}} + E_{i, p_{i} - 1} \\
&\vdots \\
E_{i, 1}^{\ast} &= D_{i} + (p_{i}-1)E_{i, p_{i}} + (p_{i} - 2)E_{i, p_{i}-1} + \cdots + 2E_{i, 3} + E_{i, 2}
\end{align*}
and
\begin{center}
$F^{\ast} = D_{i} + p_{i}E_{i, p_{i}} + (p_{i} - 1)E_{i, p_{i}-1} + \cdots + 2E_{i, 2} + E_{i, 1}$.
\end{center}
By Lemma \ref{lem:curvYgenerators}, we can describe the cone of curves as follows:
\begin{center}
$\oCurv(Y) = \langle D_{i}, E_{i, j}, F \; \vert \; 1 \leq i \leq 3$ and $1 \leq j \leq p_{i} \rangle_{\mathbb{R}_{\geq 0}}$.
\end{center}

\subsection*{Number of boundary components $n=4$.}
\label{subsec:n4}

Let $\bar{Y} = \mathbb{P}^{1} \times \mathbb{P}^{1}$ with its toric boundary, which is the union of two fibers of each of the two projections $\mathbb{P}^{1} \times \mathbb{P}^{1} \rightarrow \mathbb{P}^{1}$. Fix four points $q_{i} \in \bar{D}_{i}$ where $i = 1, \dots, 4$ such that $q_{1}$ and $q_{3}$ lie on a fiber $\bar{F}_{1}$ of the first projection and $q_{2}$ and $q_{4}$ lie on a fiber $\bar{F}_{2}$ of the second projection. Then blow them up some number of times (see Figure \ref{fig:n4-pBUs}).

\begin{figure}
\centering
\includegraphics[scale=.25]{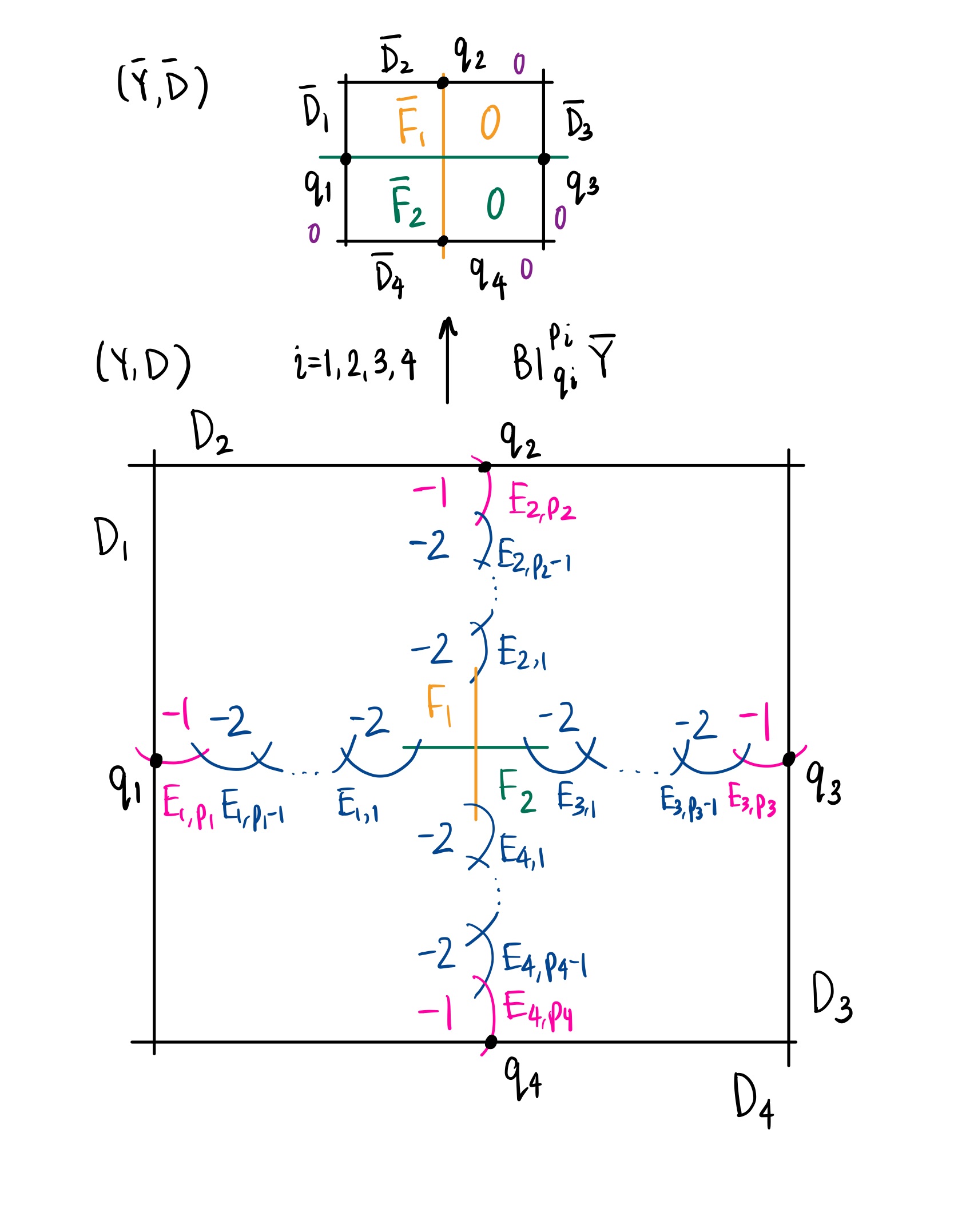}
\caption[The general picture for $n=4$]{This shows the blowup of the toric pair $(\bar{Y}, \bar{D})$ at the points $q_{i}$. At each point there are arbitrarily many ($p_{i}$) blowups, resulting in $(Y, D)$.}
\label{fig:n4-pBUs}
\end{figure}

A basis for $\Pic(Y)$ is
\begin{center}
$\mathcal{B}_{4} = \{E_{i,j} F_{1}, F_{2} \; \vert \; 1 \leq i \leq 4 \text{ and } 1 \leq j \leq p_{i}\}$.
\end{center}
The dual basis $\mathcal{B}^{\ast}_{4}$ consists of the following elements:
\begin{align*}
E_{i, p_{i}}^{\ast} &= D_{i} \\
E_{i, p_{i}-1}^{\ast} &= D_{i} + E_{i, p_{i}} \\
E_{i, p_{i}-2}^{\ast} &= D_{i} + 2E_{i, p_{i}} + E_{i, p_{i} - 1} \\
&\vdots \\
E_{i, 1}^{\ast} &= D_{i} + (p_{i}-1) E_{i, p_{i}} + (p_{i} - 2) E_{i, p_{i}-1} + \cdots + 2E_{i, 3} + E_{i, 2}
\end{align*}
and for each of $F_{j}$ where $j = 1, 2$, there are two (linearly equivalent) possibilities:
\begin{center}
$F_{j}^{\ast} = D_{i} + p_{i}E_{i, p_{i}} + (p_{i}-1)E_{i, p_{i}-1} + \cdots + 2E_{i, 2} + E_{i, 1}$
\end{center}
with $i= 2$ or $4$ for $j=1$ and $i=1$ or $3$ for $j=2$ . By Lemma \ref{lem:curvYgenerators}, we can describe the cone of curves as follows:
\begin{center}
$\oCurv(Y) = \langle D_{i}, E_{i, j}, F_{1}, F_{2} \; \vert \; 1 \leq i \leq 4 \text{ and } 1 \leq j \leq p_{i} \rangle_{\mathbb{R}_{\geq 0}}$.
\end{center}

\subsection*{Number of boundary components $n=5$.}
\label{subsec:n5}

Let $\bar{Y}$ be the blowup of four points in general position in $\mathbb{P}^{2}$ and let $\bar{D}$ be a cycle of five $(-1)$-curves. The surface $\bar{Y}$ contains ten $(-1)$-curves:
\begin{enumerate}
\item Four are exceptional curves $E_{i}$ from blowing up the points $p_{i}$, for $i = 1, 2, 3, 4$.
\item Six (obtained by 6 = $4 \choose 2$) are strict transforms $l^{\prime}_{ij}$ of lines $l_{ij}$ defined by points $p_{i}$ and $p_{j}$.
\end{enumerate}
The process of blowing up points $p_{i}$ for $i=1, \dots, 4$ on $\mathbb{P}^{2}$ to obtain a surface with ten curves is shown in Figure \ref{fig:n5-triangleBU}. Taking the dual of this figure (see Figure \ref{fig:n5-dualToPetersen}), we choose a pentagon inside and rearrange vertices so that this pentagon encloses all other vertices. Then the interior vertices can be rearranged to form a star, resulting in the Petersen graph. The dual of the interior five-pointed star is a pentagon, and the dual of the outside pentagonal boundary is again a pentagon. Together, these two parts form the configuration of $(-1)$-curves on the surface $\bar{Y}$. The $\bar{F}_{i}$'s are $(-1)$-curves not contained in the boundary in $\bar{Y}$; the $\bar{F}_{i}$'s correspond to a pentagonal star. Each $\bar{F}_{i}$ intersects the boundary component $\bar{D}_{i}$, and we denote their strict transforms by $F_{i}$.

\begin{figure}
\centering
\includegraphics[scale=0.4]{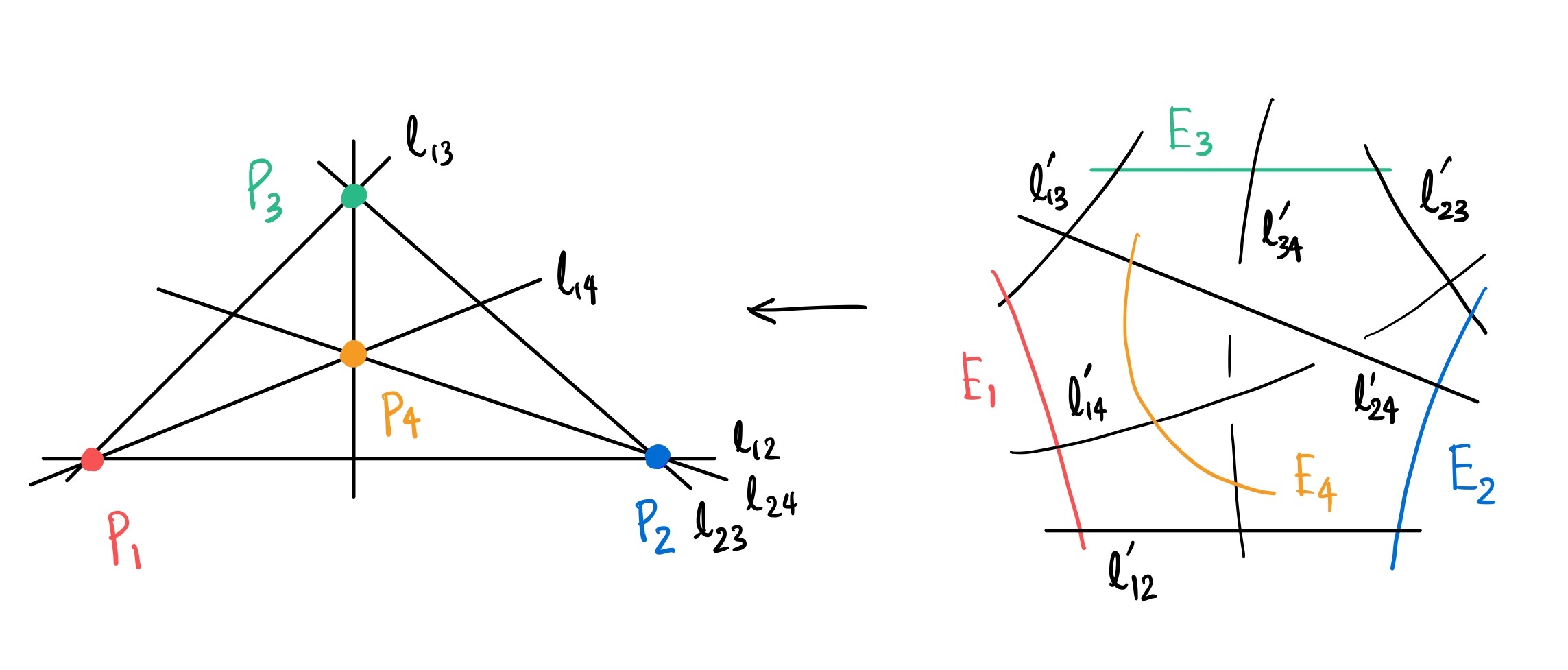}
\caption[The ten exceptional curves on $\bar{Y}$ for case $n=5$]{Blowing up once at each point $p_{1}, \dots, p_{4}$ results in the diagram above.}
\label{fig:n5-triangleBU}
\end{figure}

\begin{figure}
\centering
\includegraphics[scale=0.4]{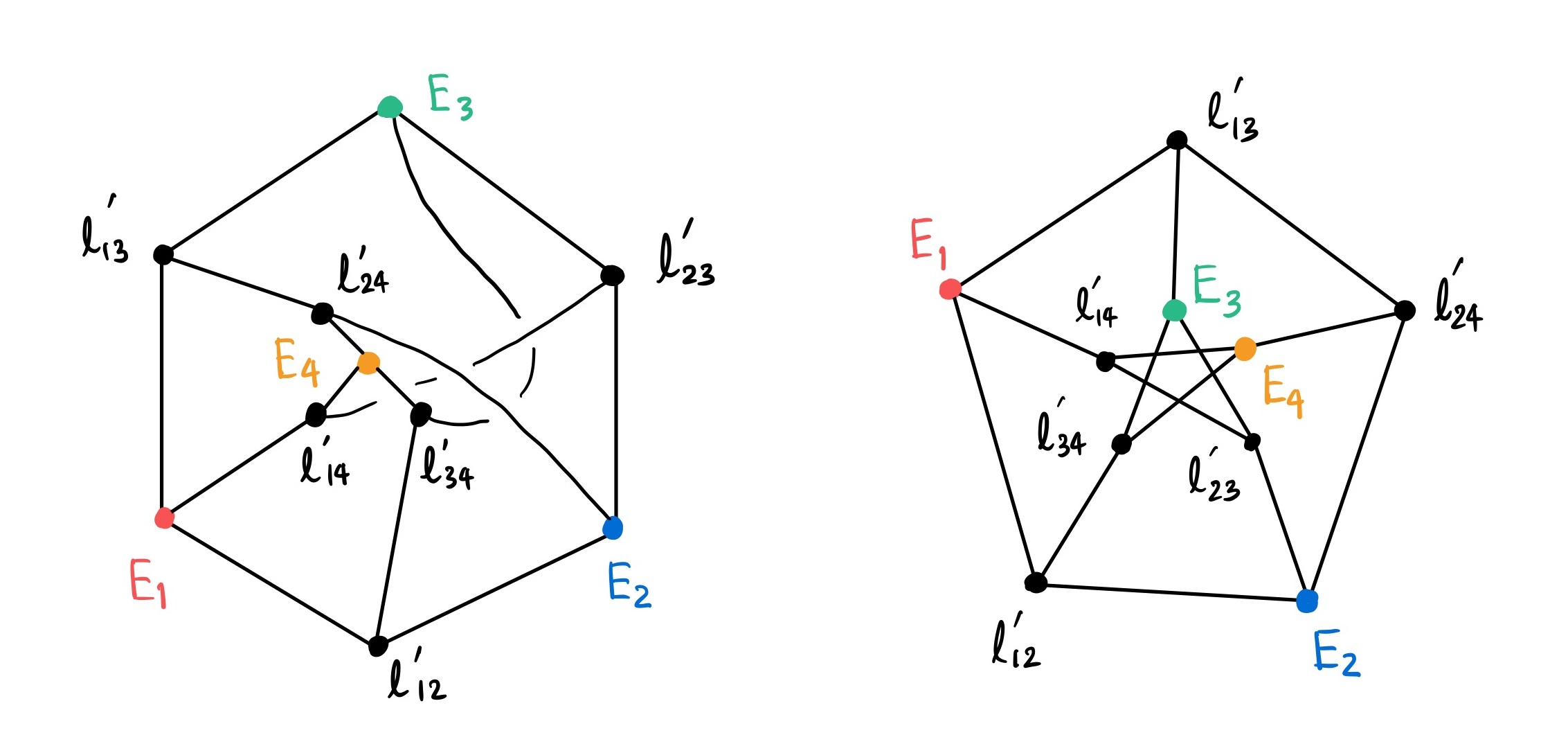}
\caption[The Petersen graph]{The dual graph of the blowup shown in Figure \ref{fig:n5-triangleBU} is drawn on the left, and it is equivalent to the Petersen graph shown on the right.}
\label{fig:n5-dualToPetersen}
\end{figure}

The remaining $(-1)$-curves on $\bar{Y}$ intersect $\bar{D}$ transversely in five points $q_{i}$ where $i = 1, \dots, 5$. Blow up these points some number of times to obtain Figure \ref{fig:n5-pBUs}. A basis $\mathcal{B}_{5}$ for $\Pic(Y)$ is the collection:
\begin{center}
$\mathcal{B}_{5} = \{ E_{i, j}, F_{i} \; \vert \; 1 \leq i \leq 5 \text{ and } 1 \leq j \leq p_{i}\}$.
\end{center}

\begin{figure}
\centering
\includegraphics[scale=0.5]{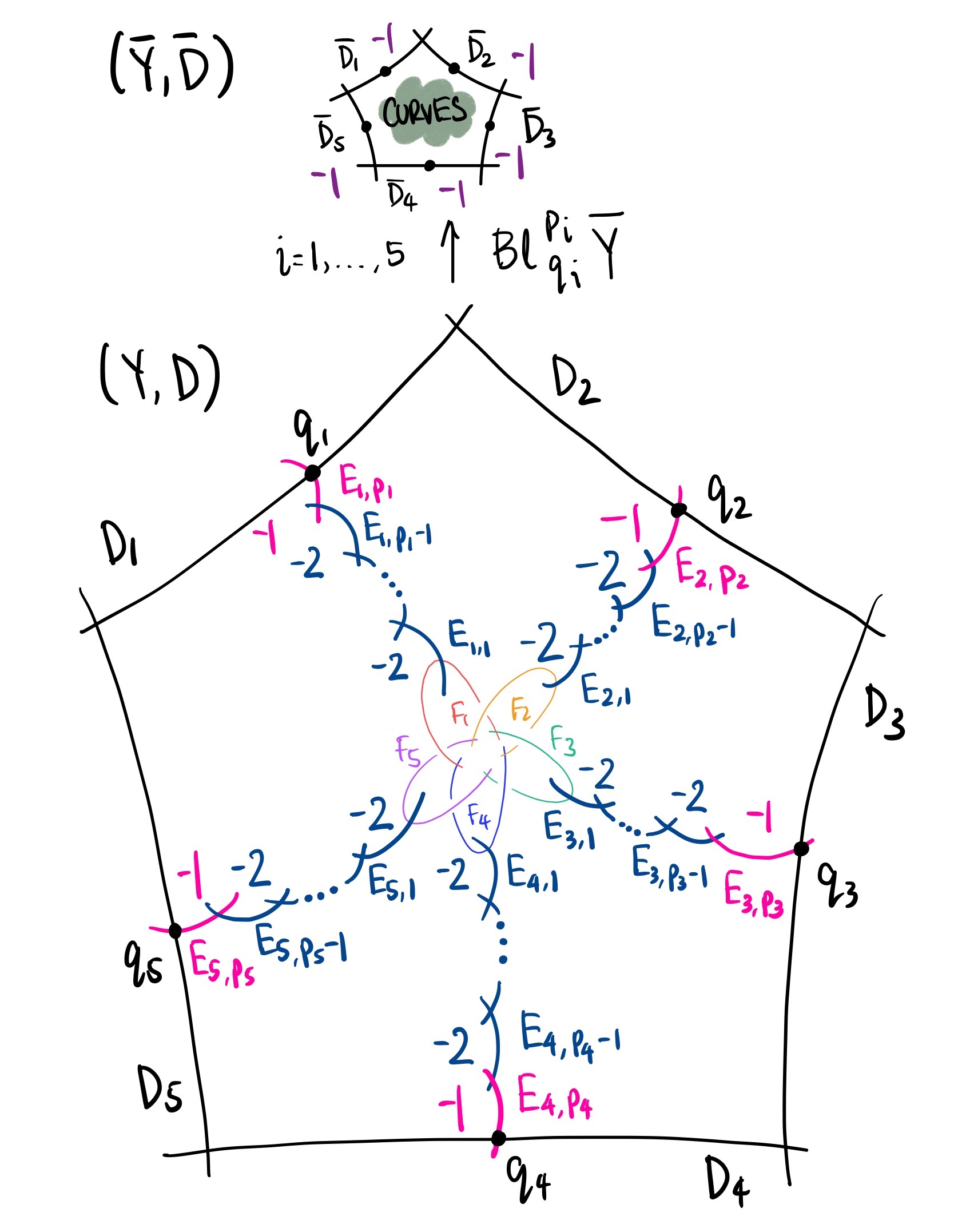}
\caption[The general picture for $n=5$]{Blowing up once at each point $q_{1}, \dots, q_{5}$ results in the diagram above.}
\label{fig:n5-pBUs}
\end{figure}

\begin{figure}
\centering
\includegraphics[scale=1.2]{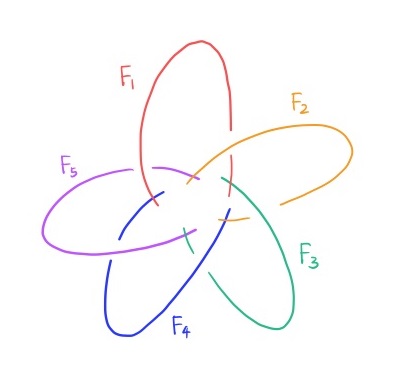}
\caption[Some central curves in the case $n=5$]{This drawing shows the curves in the center of case $n=5$.}
\label{fig:n5-pBUs}
\end{figure}

The dual elements $E_{i, j}^{\ast}$ and $F_{i}$, where $1 \leq i \leq 5$ and $1 \leq j \leq p_{i}$, are defined as follows:
\begin{align*}
E_{i, p_{i}}^{\ast} &= D_{i} \\
E_{i, p_{i}-1}^{\ast} &= D_{i} + E_{i, p_{i}} \\
E_{i, p_{i}-2}^{\ast} &= D_{i} + 2E_{i, p_{i}} + E_{i, p_{i}-1} \\
& \vdots \\
E_{i, 1}^{\ast} &= D_{i} + (p_{i}-1) E_{i, p_{i}} + (p_{i} - 2) E_{i, p_{i}-1} + \cdots + 2E_{i, 3} + E_{i, 2} \\
F_{i}^{\ast} &= D_{i} + p_{i}E_{i, p_{i}} + (p_{i}-1)E_{i, p_{i}-1} + \cdots + 2E_{i, 2} + E_{i,1}
\end{align*}

By Lemma \ref{lem:curvYgenerators}, we can describe the cone of curves as follows:
\begin{center}
$\oCurv(Y) = \langle D_i, E_{i, j}, F_{i} \; \vert \; 1 \leq i \leq 5 \text{ and } 1 \leq j \leq p_{i} \rangle_{\mathbb{R}_{\geq 0}}$.
\end{center}

\subsection*{Number of boundary components $n=6$.}
\label{subsec:n6}

\begin{theorem} (A. Simonetti, Ph.D. Thesis, 2021)
\label{thm:Simonetti}
Let $(Y, D)$ be a log Calabi-Yau surface with negative definite or negative semidefinite boundary of length $n=6$. Then $(Y, D)$ is obtained as a blowup of the toric surface $(\bar{Y}, \bar{D})$ with boundary a cycle of six $(-1)$-curves.
\end{theorem}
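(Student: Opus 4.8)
The plan is to follow the pattern of Looijenga's proof of the cases $n\le 5$ (Theorem~\ref{thm:Looijenga}); recall that throughout this section $(Y,D)$ denotes the pair $(Y_{e},D_{e})$ with split mixed Hodge structure, so that Theorem~\ref{nefImpliesSemiample} is available. The strategy has two parts. First, reduce $(Y,D)$ to a \emph{minimal} pair $(\bar Y,\bar D)$ --- one containing no interior $(-1)$-curve --- by successively contracting interior $(-1)$-curves; this keeps the number of boundary components equal to six and terminates, since each contraction strictly drops the Picard rank. Second, classify the minimal pairs with six boundary components: under the hypothesis that $(D_{i}\cdot D_{j})$ is negative definite or negative semidefinite, the only one is $(\bar Y,\bar D)$ with $\bar Y$ the del Pezzo surface of degree $6$, i.e.\ $\mathbb{P}^{2}$ blown up at its three torus-fixed points, and $\bar D$ its toric boundary, which is a cycle of six $(-1)$-curves. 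Granting this, running the contractions backwards exhibits $(Y,D)$ as an iterated blowup of $(\bar Y,\bar D)$, which is the assertion.

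For the first part: by Definition~\ref{def:interior-1curve} an interior $(-1)$-curve $E$ meets $D$ transversally at one smooth point of $D$, so its contraction is an isomorphism near the nodes of $D$ and carries $D$ to a normal crossing anticanonical cycle with the same six components; the result is again log Calabi--Yau. Since $\rho$ strictly decreases, only finitely many such contractions are possible, ending at a minimal pair $(\bar Y,\bar D)$. (Alternatively one may quote the Gross--Hacking--Keel toric model theorem \cite{GHK15a}; what must then be supplied is that for $n\le 6$ the toric model is reached without any corner blowup, which is precisely the content of the classification step.)

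The second part is the heart of the matter; here the hypotheses do the work. When $(D_{i}\cdot D_{j})$ is negative semidefinite, $D$ must be a cycle of six $(-2)$-curves (type $\tilde A_{5}$), so $-K_{Y}=D$ is nef with $(-K_{Y})^{2}=0$; by the split mixed Hodge structure and Theorem~\ref{nefImpliesSemiample} it is semiample, hence $|{-mK_{Y}}|$ defines an elliptic fibration $Y\to\mathbb{P}^{1}$ with $D$ a fiber of type $I_{6}$, and $Y$ is a rational elliptic surface whose sections are interior $(-1)$-curves. Contracting interior $(-1)$-curves one checks that the minimal model is $dP_{6}$ with its hexagon of $(-1)$-curves. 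When $(D_{i}\cdot D_{j})$ is negative definite, $D$ contracts to a cusp singularity (Grauert); one combines the Gross--Hacking--Keel toric model of $(Y_{e},D_{e})$ with the identity $\sum_{i=1}^{6}\bar D_{i}^{2}=K_{\bar Y}^{2}-12$ (from $\bar D^{2}=(\sum_{i}\bar D_{i})^{2}=K_{\bar Y}^{2}$ and the cyclic incidences), the bound $K_{\bar Y}^{2}\le 9$, and the observation that a rational surface with $K^{2}\ge 7$ cannot carry a length-six anticanonical cycle (a degree count, using that exceptional curves have degree zero against a suitable nef class) to force $K_{\bar Y}^{2}=6$ and $\bar D_{i}^{2}=-1$ for all $i$; a short fan computation (the rays are $\pm e_{1},\pm e_{2},\pm(e_{1}+e_{2})$, each original line of $\mathbb{P}^{2}$ being blown up at both its torus-fixed points) then identifies the pair, which is indeed minimal since its only $(-1)$-curves are the six boundary components.

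I expect the second part to be the main obstacle. The termination and the preservation of the log Calabi--Yau condition under interior $(-1)$-contractions are routine, and recognising $dP_{6}$ from its fan is immediate; the real work is the birational-combinatorial classification of minimal six-component pairs --- controlling the admissible self-intersection sequences of the cycle, showing that the negative (semi)definiteness (together, in the semidefinite case, with semiampleness of $-K$ coming from the split mixed Hodge structure) forces the sequence $(-1,-1,-1,-1,-1,-1)$, and in particular excluding the configurations that would require a corner blowup of a smaller model, which is exactly the phenomenon that makes the analogous statement fail once $n\ge 7$.
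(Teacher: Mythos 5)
The paper does not prove this statement at all: it is quoted from Simonetti's thesis \cite{S21} and used as a black box, so there is no internal proof to compare against. Judged on its own, your first step (contract interior $(-1)$-curves; each contraction preserves the log Calabi--Yau condition and the length $n=6$ and drops the Picard rank, so the process terminates at a pair with no interior $(-1)$-curves) is fine. The second step, which you correctly identify as the heart of the matter, contains intermediate claims that are actually false, not merely unproved.

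First, ``negative semidefinite forces $D$ to be a cycle of six $(-2)$-curves'' is wrong. If the intersection matrix $M$ of a cycle annihilates a vector $c$ with all entries positive, then writing $x_i=c_iy_i$ gives $x^{T}Mx=-\tfrac12\sum_{i\neq j}M_{ij}c_ic_j(y_i-y_j)^2\le 0$, so $M$ is automatically negative semidefinite; the vector $(2,1,1,2,1,1)$ is annihilated by the cycle with self-intersections $(-1,-3,-3,-1,-3,-3)$, and this boundary is realized by blowing up the hexagonal $dP_6$ at two interior points on each of $D_2,D_3,D_5,D_6$. For that pair $-K_Y=D$ is not even nef ($D\cdot D_2=-1$), so the elliptic-fibration argument does not apply, and your case division misses it entirely. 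Second, and more seriously, the classification claim ``the only minimal pair with six boundary components is $dP_6$ with its hexagon'' is false: no toric surface carries an interior $(-1)$-curve (a non-invariant irreducible curve moves under the torus action, hence has nonnegative self-intersection), so \emph{every} smooth toric pair with $n=6$ is minimal in your sense --- for instance the one with boundary self-intersections $(-2,-1,-2,-1,0,0)$. Blowing that pair up at six interior boundary points yields a pair with $I_6$ (hence negative semidefinite) boundary, and contracting those six exceptional curves returns to the non-hexagonal toric model. So ``contract interior $(-1)$-curves until none remain'' has no well-defined endpoint and need not land on the hexagon; the real content of the theorem is that among the many toric models one can always \emph{choose} one that is the hexagonal $dP_6$ with only interior blowups, and that requires an argument (rechoosing the contraction sequence, or elementary transformations between toric models) in which the hypothesis on $(D_i\cdot D_j)$ and the bound $n\le 6$ enter essentially. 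Relatedly, since the hypothesis is a property of $(Y,D)$ and not of the minimal model (the hexagon itself has $\bar D^2=6>0$), the numerology $\sum\bar D_i^2=K_{\bar Y}^2-12$ together with $K_{\bar Y}^2\le 6$ cannot by itself force $\bar D_i^2=-1$ for all $i$. Your closing paragraph locates the difficulty correctly, but the proposal as written does not overcome it.
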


\begin{figure}
\centering
\includegraphics[scale=0.5]{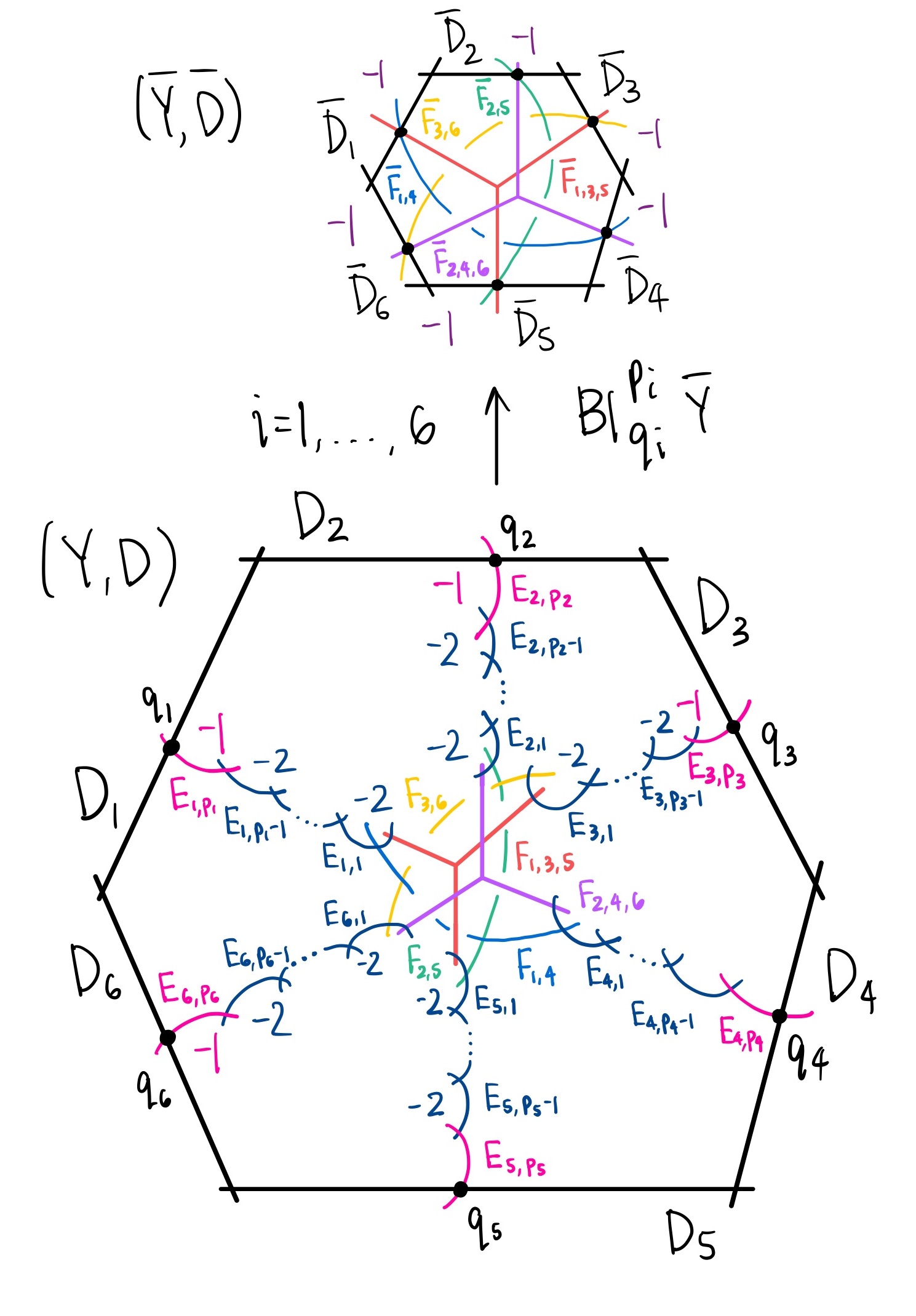}
\caption[The general picture for $n=6$]{This diagram shows the surface $(\bar{Y}, \bar{D})$ blown up at points $q_{i}$, each a total of $p_{i}$ times, for when $n = 6$.}
\label{fig:n6-pBUs}
\end{figure}

We have the toric surface $\bar{Y}$ with toric boundary $\bar{D} = \bar{D}_{1} + \cdots + \bar{D}_{6}$, a hexagon of $(-1)$-curves. Take $q_{i}$ where $i = 1 \dots, 6$ to be the points $(-1) \in \mathbb{C}^{\ast} \subset \mathbb{P}^{1} = \bar{D}_{i}$ for some choice of toric coordinates on $\bar{Y}$, and $Y$ the blowup of $\bar{Y}$ some number of times at each $q_{i}$. Define an index set as follows:
\begin{equation}
\label{eqn:n6-FindexSet}
K = \{\{1, 4\}, \{2, 5\}, \{3, 6\}, \{1, 3, 5\}, \{2, 4, 6\}\}.
\end{equation}
Let $F_{k}$ be the strict transform of $\bar{F}_{k}$ on $Y$. We consider the following five curves $\bar{F}_{k}$ on $\bar{Y}$ ($k \in K$), such that $\bar{F}_{k} \cap \bar{D} = \{q_{i} \; \vert \; i \in k\}$.
\begin{enumerate}
\item $\bar{F}_{1, 3, 5}$ and $\bar{F}_{2, 4, 6}$ are pullbacks of a line in $\mathbb{P}^{2}$ for two different birational morphisms $\bar{Y} \rightarrow \mathbb{P}^{2}$; and
\item $\bar{F}_{1, 4}$ and $\bar{F}_{2, 5}$ and $\bar{F}_{3, 6}$ are fibers of three different morphisms $\bar{Y} \rightarrow \mathbb{P}^{1}$.
\end{enumerate}

The classes of the curves $\{\bar{F}_{k}\}$ span $\Pic(\bar{Y})$ with one relation:
\begin{center}
$\bar{F}_{1, 4} + \bar{F}_{2, 5} + \bar{F}_{3, 6} = \bar{F}_{1, 3, 5} + \bar{F}_{2, 4, 6}$.
\end{center}

For $i = 1, \dots, 6$ and $j = 0, \dots, p_{i}$, define a divisor
\begin{center}
$A_{i, j} = D_{i} + (p_{i} - j)E_{i, p_{i}} + (p_{i} - j - 1)E_{i, p_{i} - 1} + \dots, + E_{i, j+1}$.
\end{center}

Then for $j > 0$ we have
\begin{equation}
\label{eqn:dualBasisOfT}
  A_{i, j} \cdot E_{s, t} =\left\{
  \begin{array}{@{}ll@{}}
    1 & \text{if } i = s \text{ and } j=t;  \\
    0 & \text{otherwise.}
  \end{array}\right.
\end{equation}
The set $S = \{E_{i, j} \; \vert \; i = 1, \dots, 6\} \cup \{F_{k} \; \vert \; k \in K\}$ spans $\Pic(Y)$ because the set $\{\bar{F}_{k} \; \vert \; k \in K\}$ spans $\Pic(\bar{Y})$.

Let $C \subset Y$ be an irreducible curve. Suppose that $C \neq D_{i} ,E_{i, j}$ for all $i, j$. Then $C \cdot A_{i, j} \geq 0$ for all $i, j$. We can write
\begin{center}
$C = \displaystyle{\sum} a_{i, j} E_{i, j} + \displaystyle{\sum} b_{k} F_{k} \in \Pic(Y)$
\end{center}
Computing the intersection numbers $A_{i, j} \cdot E_{s, t}$ and $A_{i, j} \cdot F_{k}$ results in the following inequalities:

\begin{align*}
a_{i, j} &\geq 0 \text{ for all } i, j; \\
b_{1, 4} &+ b_{1, 3, 5} \geq 0; \\
b_{1, 4} &+ b_{2, 4, 6} \geq 0; \\
b_{2, 5} &+ b_{1, 3, 5} \geq 0; \\
b_{2, 5} &+ b_{2, 4, 6} \geq 0; \\
b_{3, 6} &+ b_{1, 3, 5} \geq 0; \text{ and } \\
b_{3, 6} &+ b_{2, 4, 6} \geq 0; \\
\end{align*}

The last six inequalities define the cone
\begin{center}
$\sigma := \langle [F_{k}] \; \vert \; k \in K \rangle_{\mathbb{R} \geq 0} \subset V$,
\end{center}
where $V := \langle [F_{k}] \; \vert \; k \in K \rangle_{\mathbb{R}}$. Using the spanning set 
\begin{center}
$\{[F_{k}]\}$ where $k \in K = \{\{1, 4\}, \{2, 5\}, \{3, 6\}, \{1, 3, 5\}, \{2, 4, 6\}\}$
\end{center}
of $V$, we can identify $\sigma$ with the cone
\begin{center}
$\langle \bar{e}_{1}, \dots, \bar{e}_{5} \rangle_{\mathbb{R} \geq 0} \subset \mathbb{R}^{5} / \langle (1, 1, 1, -1, -1) \rangle_{\mathbb{R}}$.
\end{center}

Then, we may assume that $b_{k} \geq 0$ for all $k \in K$, so that $C$ lies in the cone generated by the $E_{i, j}$ and the $F_{k}$. Therefore $\Curv(Y) = \langle D_{i}, E_{i, j}, F_{k} \; \vert \; i = 1, \dots, 6 \text{ and } j = 1, \dots, p_{i} \text{ and } k \in K \rangle_{\mathbb{R}_{\geq 0}}$.

\begin{corollary}
\label{cor:newExamplesMDS}
A log Calabi-Yau surface $(Y_{e}, D_{e})$ with split mixed Hodge structure which has boundary $D_{e}$ consisting of no more than six components is an example of a Mori Dream Space.
\end{corollary}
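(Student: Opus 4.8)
\textbf{Proof proposal for Corollary \ref{cor:newExamplesMDS}.}

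The plan is to verify directly the three conditions of Definition \ref{def:MoriDreamSpace} for $(Y_{e}, D_{e})$ when $n \le 6$, drawing on the explicit computation of the previous section and on the results collected in Section \ref{Tools}. Condition (1), that $\Pic(Y_{e}) \otimes \mathbb{R} = N^{1}(Y_{e})$, requires no work: since $(Y_{e}, D_{e})$ has maximal boundary, $Y_{e}$ is a rational surface, and for rational surfaces numerical and linear equivalence coincide, as recorded in (\ref{eqn:PicYClY}).

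For condition (2), I would observe that $\Nef(Y_{e})$ is the dual cone of $\oCurv(Y_{e})$, and that Theorem \ref{thm:proofn1-6} (carried out case by case for $n = 1, \dots, 6$) exhibits $\oCurv(Y_{e})$ as the cone generated by the finitely many integral classes $D_{i}$, $E_{i,j}$, and $F$ or $F_{k}$ listed there; in particular $\oCurv(Y_{e})$ is rational polyhedral, and being finitely generated it is closed, so $\oCurv(Y_{e}) = \Curv(Y_{e})$. The dual of a cone generated by finitely many integral vectors is cut out by finitely many linear inequalities with integer coefficients, hence is again rational polyhedral by the Minkowski--Weyl theorem. Thus $\Nef(Y_{e})$ is rational polyhedral, and all of the substantive input is the case analysis of Theorem \ref{thm:proofn1-6}. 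Finally, condition (3) is precisely Theorem \ref{nefImpliesSemiample}: on a log Calabi--Yau surface with split mixed Hodge structure, every nef divisor is semiample; this applies because each surface exhibited in Section \ref{NewExamplesMDS} is, as verified there via the Lemma on the period point, the distinguished surface $(Y_{e}, D_{e})$ of its deformation type.

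I do not anticipate a real obstacle. The content of the corollary has already been extracted in Theorem \ref{thm:proofn1-6} and Theorem \ref{nefImpliesSemiample}; the only glue needed is the standard fact that dualization preserves rational polyhedrality together with the identification of the surfaces under consideration with the split ones $(Y_{e}, D_{e})$. If anything merits care, it is bookkeeping: confirming that in each of the six cases the listed generators of $\oCurv(Y_{e})$ really are the classes of actual curves on the surface (so that $\oCurv(Y_{e})$ is exactly their positive span, via Lemma \ref{lem:curvYgenerators}), which is exactly what the per-case verifications in Section \ref{NewExamplesMDS} supply.
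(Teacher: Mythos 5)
Your proposal is correct and matches the paper's own (one-line) proof, which cites exactly the same ingredients: (\ref{eqn:PicYClY}) for condition (1), Theorem \ref{thm:proofn1-6} for condition (2), and Theorem \ref{nefImpliesSemiample} for condition (3) of Definition \ref{def:MoriDreamSpace}. You have merely spelled out the routine dualization step and the identification of the surfaces with $(Y_{e}, D_{e})$, which the paper leaves implicit.
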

\begin{proof}
This follows from \ref{eqn:PicYClY}, Theorem \ref{thm:proofn1-6}, Theorem \ref{nefImpliesSemiample}, and Definition \ref{def:MoriDreamSpace}.
\end{proof}

%
%

\section{MOTIVATIONS}
\label{Motivations}

Let $(q \in X)$ be a cusp singularity. Cusp singularities come in dual pairs such that the links are diffeomorphic but have opposite orientations (cf. \cite{L81}, \S III.2.1). There is the following conjecture (cf. \cite{E15}, Conjecture 6.2.5):
\begin{enumerate}
\item The smoothing components of the deformation space of $(q \in X)$, up to isomorphism, are in bijective correspondence with deformation types of log Calabi-Yau surfaces $(Y, D)$ such that $D$ does not contain any $(-1)$-curves and contracts to the dual cusp $p$.
\item The smoothing component of $(q \in X)$ associated to $(Y, D)$ is the {\it Looijenga space} determined by the action of $\Adm$ on the nef effective cone $\Nefe(Y^{\prime}_{gen})$, which is contained in $\langle D_{1}, \dots, D_{n} \rangle^{\bot} \otimes_{\mathbb{Z}} \mathbb{R}$. (Looijenga's construction is described in \cite{L03}, Section $4$).
\end{enumerate}

Looijenga's construction requires that $\Adm$ acts on $\Nefe(Y^{\prime}_{gen})$ with a rational polyhedral fundamental domain (Theorem \ref{coneConjectureYgen}), and this is one motivation for the cone conjecture for log Calabi-Yau surfaces. This is analogous to the original motivation for the Morrison cone conjecture for Calabi-Yau threefolds $Y$ (\cite{M93}): Morrison argues that the Looijenga space determined by the action of $\Aut(Y)$ on $\Nefe(Y)$ is identified with a neighborhood of a boundary point of the moduli of the mirror of $Y$.

The log Calabi-Yau cone conjecture can provide insight into the original Morrison cone conjecture because it includes more accessible cases. For instance, in every dimension there are many log Calabi-Yau pairs $(Y, D)$ such that the variety $Y$ is rational. In addition, the cone conjecture is related to the abundance conjecture (\cite{T11}), which is a long-standing open question of the minimal model program.

\begin{remark}
The cone conjecture for log Calabi-Yau surfaces suggests that the Morrison cone conjecture is false in general, because it is the monodromy group $\Adm$ that acts with a rational polyhedral fundamental domain on $\Nefe(Y_{gen})$, and not the automorphism group.
\end{remark}

\begin{remark}
The explicit description of $\Nef(Y_{e})$ can be used to verify the conjecture (1) stated above. For $n \leq 5$, this follows from work of Looijenga \cite{L81}, and for $n = 6$, we expect that it can be verified using work of Brohme (\cite{B95}). The deformation theory for $n > 6$ is not known.
\end{remark}

\end{document}